\documentclass[12pt,reqno]{amsart}
\usepackage{xypic}
\usepackage[group-separator={,}]{siunitx}

\usepackage{amssymb}
\usepackage[margin = 1 in]{geometry}
\usepackage{mathtools}
\usepackage{tikz-cd}
\usepackage{microtype}
\usepackage{hyperref}
\usepackage{bookmark}

\newcount\hour \newcount\minute \newcount\minus
\hour=\time \divide \hour by 60
\minute=\time
\minus=\hour\multiply\minus by -60
\advance \minute by \minus
\def\now{%
\number\hour:%
  \ifnum \minute<10 0\fi%
  \number\minute%
}

\newtheorem{theorem}{Theorem}[section]
\newtheorem{lemma}[theorem]{Lemma}

\newtheorem{corollary}[theorem]{Corollary}

\theoremstyle{definition}
\newtheorem{definition}[theorem]{Definition}
\newtheorem{cnj}[theorem]{Conjecture}
\newtheorem{example}[theorem]{Example}
\newtheorem{algorithm}[theorem]{Algorithm}
\newtheorem{notation}[theorem]{Notation}
\theoremstyle{remark}
\newtheorem{remark}[theorem]{Remark}

\newcommand{\Gammabar}{\bar{\Gamma}}

\newcommand{\mat}[1]{\begin{bmatrix} #1 \end{bmatrix}}
\newcommand{\trans}[1]{\prescript{t}{}{#1}}
\DeclarePairedDelimiterX\set[1]{\lbrace}{\rbrace}{\def\given{\;\delimsize\vert\;}#1}
\DeclarePairedDelimiter{\gen}{\langle}{\rangle}
\DeclarePairedDelimiter{\abs}{\lvert}{\rvert}

\newcommand{\Z}{\mathbb{Z}}
\newcommand{\Xbar}{\overline{X}}
\newcommand{\I}{\mathbb{I}}
\newcommand{\Q}{\mathbb{Q}}
\newcommand{\R}{\mathbb{R}}
\newcommand{\C}{\mathbb{C}}
\newcommand{\B}{\partial X}
\newcommand{\HH}{{\mathcal H}}

\DeclareMathOperator{\frob}{Frob}

\newcommand{\PP}{\mathbb{P}}

\newcommand{\cP}{\mathcal{P}}
\newcommand{\cS}{\mathcal{S}}
\newcommand{\inv}{^{-1}}

\def\O{{\mathcal O}}
\newcommand{\FF}{\mathcal{F}}

\DeclareMathOperator{\Coker}{Coker}

\newcommand{\cusp}{\mathrm{cusp}}

\DeclareMathOperator{\GL}{GL}
\DeclareMathOperator{\SL}{SL}
\DeclareMathOperator{\diag}{diag}

\DeclareMathOperator{\image}{Im}

\DeclareMathOperator{\St}{St}

\DeclareMathOperator{\Ind}{Ind}

\newcommand{\st}{\St}

\begin{document}
\title[Cohomology of cong. subgroups, Steinberg modules, and real quad. fields]{Cohomology of 
 congruence subgroups of $\SL_3(\Z)$,
Steinberg modules, and real quadratic fields}
\author{Avner Ash}
\address{Boston College, Chestnut Hill, MA  02467}
\email{Avner.Ash@bc.edu}
\author{Dan Yasaki}
\address{UNCG, Greensboro, NC  27412}
\email{d\_yasaki@uncg.edu}
\date{\today~\now}
\keywords{arithmetic homology, Steinberg representation, real quadratic field, general linear group, arithmetic group}
\subjclass[2010]{Primary 20J06; Secondary 11F67, 11F75}

\begin{abstract}
We investigate the homology of a congruence subgroup 
$\Gamma$ of $\SL_3(\Z)$ with coefficients in the 
Steinberg modules $\St(\Q^3)$ and $\St(E^3)$, where $E$ is a real quadratic field and the coefficients are $\Q$. 
By Borel-Serre duality, $H_0(\Gamma, \St(\Q^3))$ is isomorphic to 
$H^3(\Gamma,\Q)$. Taking the image of the connecting homomorphism 
 $H_1(\Gamma, \St(E^3)/\St(\Q^3))\to H_0(\Gamma, \St(\Q^3))$, followed by the Borel-Serre isomorphism,
we obtain a naturally defined Hecke-stable subspace $H(\Gamma,E)$ of 
$H^3(\Gamma,\Q)$.  We conjecture that  $H(\Gamma,E)$ is independent of $E$ and consists of the cuspidal cohomology $H_\cusp^3(\Gamma,\Q)$ plus a certain subspace of $H^3(\Gamma, \Q)$ that is isomorphic to the sum of the cuspidal cohomologies of the maximal faces of the Borel-Serre boundary.  

 We report on computer calculations of $H(\Gamma,E)$ for various
$\Gamma$, $E$ which provide evidence for the conjecture.  We give a partial heuristic for the conjecture.
 \end{abstract}
\maketitle

\section{Introduction}\label{intro}

Let $\Gamma$ be a congruence subgroup of $\SL_n(\Z)$.   For $K$ an extension of 
$\Q$, denote the Steinberg module 
with $\Q$-coefficients of the vector space $K^n$ by $\St(K^n)$.  When $K=\Q$
and 
$n$ is understood from the context, set $\st=\St(\Q^n)$. 
By filtering the inclusion $\st \subset \St(K^n)$ as in~\cite{A}, we can compare the homology groups
$H_*(\Gamma,\st)$ and  $H_*(\Gamma,\St(K^n))$.

Now specialize to $n=2$ or $3$
and $E$ a real quadratic field.  In these cases, the filtration has only one step, and the comparison boils down to the connecting homomorphisms
$\partial\colon H_k(\Gamma,C)\to H_{k-1}(\Gamma,\st)$, where $C=\St(E^n)/ \st$.  It turns out that we can describe $C$ in terms of $\O_E^\times$, the units in the ring of integers of $E$.

In~\cite{AY} we studied the case $n=2$ and $k=1$.  In this paper we study  
$n=3$ and $k=1$.   We are interested in the image of $\partial$, which we call $H(\Gamma,E)$. By Borel-Serre duality we identify $H_{0}(\Gamma,\st)$ with $H^\nu(\Gamma,\Q)$, where $\nu=1$ if $n=2$, and $\nu=3$ if $n=3$.
Then $H(\Gamma,E)$ is the span of a set of modular symbols built in a certain way 
from those $\gamma\in\Gamma$ which are images of elements of $\O_E^\times$
embedded into $\Gamma$.
The units of $E$ are reflected in the 
cohomology of $\Gamma$.

When $n = 2$ it is obvious that $H(\Gamma,E)$ is contained in the cuspidal cohomology $H^1_\cusp(\Gamma,\Q)$, and in~\cite{AY} we conjectured that it is equal to $H^1_\cusp(\Gamma,\Q)$.  We presented numerical evidence for the conjecture, and we also proved the conjecture under the assumption of the Generalized Riemann Hypothesis.

When $n=3$ it is no longer obvious what the relationship should be between $H(\Gamma,E)$ and cuspidal cohomology.  After some numerical experimentation, we conjectured that  $H(\Gamma,E)$ is the sum of 
$H^3_\cusp(\Gamma,\Q)$ and a subspace of $H^3(\Gamma,\Q)$ that maps isomorphically to the part of the cohomology of the Borel-Serre boundary of the locally symmetric space for 
$\Gamma$ that consists of cuspidal cohomology of the maximal boundary faces.  We then tested the conjecture with further computation, without finding any counter-examples.  
Thus $H(\Gamma,E)$ provides an interesting way of constructing cuspidal cohomology, which we hope may enable 
progress in the study of $H^3_\cusp(\Gamma,\Q)$.
\footnote{We plan to study the analogue of $H(\Gamma,E)$ for other fields $E$ in future work.}

The subspace $H(\Gamma,E)$ is stable under the action of the Hecke operators.  
We use this fact as a check on our computations:  Our computations identify a candidate space $H$ for 
$H(\Gamma,E)$, but which we cannot prove is actually all of $H(\Gamma,E)$ unless $H$ happens to be all of $H^3(\Gamma,\Q)$.  However, we do verify that $H$ is Hecke stable,
\footnote{Of course, in practice we can only verify this for a finite number of Hecke operators} and this gives the check.

If we tensor with $\C$, $H(\Gamma,E)\otimes_\Q\C$ can be viewed as a subspace of arithmetic cohomology, again via the 
the Borel-Serre duality isomorphism:  
\[
H_0(\Gamma,\St(\Q^3)\otimes_\Q\C)\xrightarrow{\sim} H^{3}(\Gamma,\C).
\]
Thus $H(\Gamma,E)$ is connected to automorphic representations.  We use the Hecke action computationally to identify the 
automorphic constituents of $H(\Gamma,E)$.
 
The rest of this introduction summarizes in more detail the contents of the paper.  First we define the congruence subgroups, for which we have carried out our computations.

\begin{definition}\label{N1}  Let $N$ be a positive integer.  Writing matrices in block notation with respect to the partition $(1,n-1)$, define
 \begin{itemize}
 \item $\Gamma_0(N,n)^\pm$ to be the subgroup of $\GL_n(\Z)$
consisting of matrices congruent to 
\[
\mat{
*&*\\
0&*
} \pmod N.
\]  
\item $\Gamma_0(N,n)=\Gamma_0(N,n)^\pm \cap \SL_n(\Z)$. 
 \end{itemize}
 If $n$ is understood from the context, we suppress it from the notation.
\end{definition}  

In this paper we take  $\Gamma=\Gamma_0(N,3)$.  When we do the actual computations, we run them for $\Gamma_0(N,3)^\pm$ because that speeds them up.  This is not a problem,  because 
$H_i(\Gamma_0(N,3),\st)$ is isomorphic to $H_i(\Gamma_0(N,3)^\pm,\st)$ for all $i$.
 \footnote{
Lemma:
Let $n$ be odd integer, and let $R$ be a ring in which $2$ is invertible.
Let $M$ be an $R[\Gamma_0(N,n)^\pm]$-module, and suppose that $-I$ acts trivially on $M$.  Then the inclusion of $\Gamma_0(N,n)$ into 
$\Gamma_0(N,n)^\pm$ induces an isomorphism
$H_i(\Gamma_0(N,n),M)\simeq H_i(\Gamma_0(N,n)^\pm,M)$ for all $i$.

Proof:
$\Gamma_0(N,n)^\pm$ contains $\Gamma_0(N,n)$ as a subgroup of index two and is generated by $\Gamma_0(N,n)$ and $-I$.   Let $J$ be the group $\{I,-I\}$.
Because 2 is invertible, transfer shows that 
$H_i(\Gamma_0(N,n),M)\simeq H_i(\Gamma_0(N,n)^\pm,M)_J$.  But $J$ is central so it acts trivially on $H_i(\Gamma_0(N,n)^\pm,M)$.
}
We perform computations to find $H(\Gamma,E)$ for
\begin{itemize}
\item  all $N\le 50$, all prime $N$ with $51\le N\le 100$, and for $N=11^2$ and $13^2$.
\item for the real quadratic fields $E=\Q(\sqrt\Delta)$ with 
 squarefree $\Delta\le 10$.
\end{itemize}

On the basis of some of these computations, we make a conjecture about $H(\Gamma,E)$, which is borne out by the rest of our computations.
A numerical consequence of the conjecture is that the codimension of $H(\Gamma,E)$ in $H^3(\Gamma,\Q)$
should equal $b=\dim H_1(T/\Gamma)$, where $T$ is the  
Tits building  of $\GL_3(\Q)$ (Definition~\ref{tits}).  This was the first thing we noticed in our computational results. 
 
To state the full conjecture, we introduce some objects which are explained in greater detail in Sections~\ref{bound} and~\ref{conj}. 
Let $\cS$ be the symmetric space for $\Gamma$, and let $X$ be the Borel-Serre compactification of the locally symmetric space 
$\Gamma\backslash \cS$.  There is a natural isomorphism $H^*(\Gamma,\Q)\simeq H^*(X,\Q)$.  The interior cohomology $H_!^*(\Gamma,\Q)$ of $\Gamma$ is defined to be what corresponds under this isomorphism to the kernel of the restriction map $H^*(X,\Q)\to H^*(\B,\Q)$  
\footnote{Tensored with $\C$ we obtain the cuspidal cohomology of $\Gamma$:  
$H_!^*(\Gamma,\Q)\otimes_\Q \C= H_\cusp^*(\Gamma,\C)$.  This is not true in general if $n\ge4$.}.
  Lee and Schwermer~\cite{lee-schwermer} show that 
  $H^3(\Gamma,\Q)$ is the direct sum of three pieces,  $H_!^*(\Gamma,\Q)$, $A$, and $B$.  The subspace $A$ comes from the cuspidal cohomology of the maximal faces of $\B$ and is describable in terms of holomorphic cuspforms of weight 2 for subgroups of $\SL_2(\Z)$.  The subspace $B$ has dimension equal to $b$, defined in the paragraph above.

\begin{cnj}\label{conj-intro}
$H(\Gamma,E)= H_!^*(\Gamma,\Q)+A$. 
\end{cnj}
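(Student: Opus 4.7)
The plan is to establish the two containments separately, combining a boundary-restriction argument for the upper bound with Hecke-equivariance and torus-period non-vanishing for the lower bound.

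To prove $H(\Gamma,E) \subseteq H_!^3(\Gamma,\Q) + A$, I would analyze the image of $\partial$ under the restriction map $H^3(X,\Q) \to H^3(\B,\Q)$. Each generator of $H(\Gamma,E)$ is a modular symbol built from a matrix $\gamma \in \Gamma$ that is the image of an element of $\O_E^\times$ acting on $E^3$. The $\O_E$-action forces $\gamma$ to preserve a rank-two $\Q$-subspace of $\Q^3$, so the associated cycle is supported on a single $\Q$-rational flag $\Q^2 \subset \Q^3$. Translating through Borel-Serre duality and the Lee-Schwermer decomposition, the dual cohomology class can only restrict to boundary components indexed by such flags, i.e.\ the maximal boundary faces that furnish the summand $A$; it must vanish on the strata whose contribution is $B$. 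The technical content is a careful unfolding of $\partial$ via the sequence $0 \to \st \to \St(E^3) \to C \to 0$ together with the identification of $C$ in terms of $\O_E^\times$ indicated in the introduction.

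For the reverse containment, I would argue component by component using the Hecke action. Since both $H(\Gamma,E)$ and $H_!^3(\Gamma,\Q) + A$ are Hecke-stable subspaces of $H^3(\Gamma,\Q)$, and the ambient space is semisimple for the Hecke algebra, it suffices to show that every Hecke eigenclass $\xi$ in $H_!^3 + A$ pairs non-trivially with some class of $H(\Gamma,E)$ under Poincar\'e duality on $X$. For $\xi$ cuspidal, the natural pairing partner is the modular symbol attached to a fundamental unit of $E$, realizing a closed one-cycle on $\Gamma \backslash \cS$ that wraps the anisotropic torus of $E$; the pairing is a torus period of the automorphic form attached to $\xi$. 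For $\xi \in A$, which corresponds to a weight-$2$ holomorphic cuspform of a subgroup of $\SL_2(\Z)$ induced up from a maximal parabolic, a parallel torus-period argument on the boundary face should detect it, reducing the problem to a version of the $n=2$ situation of~\cite{AY} on the corresponding Levi.

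The principal obstacle is the analog of what forced an appeal to GRH in~\cite{AY} for $n=2$: one needs a uniform non-vanishing statement for the relevant central $L$-values---cubic automorphic $L$-functions twisted by quadratic characters of $E$ in the cuspidal case, and twisted weight-$2$ $L$-values in the $A$ case---strong enough to ensure that, as $E$ ranges over real quadratic fields, every eigenclass on the right of the conjecture is detected by some $E$. A second, essentially combinatorial, obstacle is to match precisely the image of $\partial$ in the boundary cohomology with the Lee-Schwermer description of $A$, by identifying the $\Gamma$-stabilizers of the relevant parabolic subgroups with the specific congruence subgroups of $\SL_2(\Z)$ whose weight-$2$ cusp forms feed into $A$. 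The striking prediction that $H(\Gamma,E)$ is independent of $E$ suggests that a cleaner approach may exist in which different $E$'s are compared directly; such an argument would bypass the $L$-value non-vanishing step but requires new input beyond what is available here.
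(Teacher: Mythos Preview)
This statement is a \emph{conjecture}, not a theorem; the paper does not prove it. The authors say explicitly (Section~\ref{conj}) that beyond computational evidence they do not know why $V(H(\Gamma,E))$ should contain $H_!^3(\Gamma,\Q)$ nor why its intersection with $B(\Gamma)$ should be trivial, and they offer only a heuristic for the inclusion of $A(\Gamma)$. So there is no proof in the paper to compare against; what follows is an assessment of your strategy on its own terms and against the paper's heuristic.

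Your upper-bound argument has a real gap. You assert that the modular symbol $[f,\gamma f,da]_\Gamma$ is ``supported on a single $\Q$-rational flag'' and therefore restricts only to the maximal boundary faces that feed into $A$. But the Voronoi realization of this symbol is a hexagon with \emph{six} boundary edges, each lying in a different face $e'(P)$ (Definition~\ref{vor-real} and the discussion after Theorem~\ref{12}). Only one of those edges sits in the face for the parabolic stabilizing the plane $\langle f,\gamma f\rangle$; the other five lie in faces attached to the lines $\langle f\rangle$, $\langle \gamma f\rangle$, $\langle da\rangle$ and the planes $\langle f,da\rangle$, $\langle \gamma f,da\rangle$. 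The paper's own heuristic already isolates the ``good'' edge and then says one would need the contributions of the other five to cancel in order to land in $A$---and this cancellation is exactly what is not established. Your claim that the restriction ``must vanish on the strata whose contribution is $B$'' is asserted, not argued; $B(\Gamma)\simeq H_1(T/\Gamma,\Q)$ is built from the incidence of minimal and maximal faces, and nothing in the shape of $[f,\gamma f,da]$ forces the corresponding class in $H_1(T/\Gamma,\Q)$ to be zero.

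Your lower-bound strategy is in the same spirit as the paper's heuristic (reduce the $A$-part to the $n=2$ situation on the Levi, as in~\cite{AY}), and you correctly flag the essential obstacle: one needs non-vanishing of the relevant torus periods or twisted central $L$-values. Note, however, a mismatch in quantifiers: the conjecture is for each \emph{fixed} $E$, so you need non-vanishing for that particular $E$, not merely for some $E$ in a family. Allowing $E$ to vary would at best prove $\bigcup_E H(\Gamma,E)=H_!^3+A$, which is weaker than (and not obviously equivalent to) the conjecture. Your closing remark that a direct comparison of different $E$'s might bypass the $L$-value issue is interesting, but as written it is a hope, not an argument.
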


We do not have a good explanation for why the conjecture should be true.  In Section~\ref{conj} we give a heuristic argument for why $H(\Gamma,E)$ should contain $A$.

Our computations are not definitive as long as
$H(\Gamma,E) \ne H^3(\Gamma, \Q)$, because there is the possibility that further computation with additional real quadratic units could conceivably discover more elements in $H(\Gamma,E)$.  The facts that the space we compute to be putatively equal $H(\Gamma,E)$ is Hecke-stable and that the computations agree with our conjecture give us confidence in the results reported here. 
       
Section~\ref{mod} recalls basic facts about the Steinberg module and about modular symbols.  
Section~\ref{conn} derives a formula for the connecting homomorphism $\partial$ in terms of modular symbols.   
Section~\ref{stuff} determines the elements of $\Gamma$ corresponding to units in the ring of integers of $E$. 
Section~\ref{2} describes, with proof,  an algorithm for computing the image of $\partial$.

In Section~\ref{cusps}, we offer a discussion of the $\Gamma$-orbits of 
parabolic subgroups of $\GL_3(\Q)$, which is necessary to determine the faces of
 the Borel-Serre boundary $\B$.  Also we compute the Euler characteristic of the Tits building modulo $\Gamma$, which is the nerve of $\B$.
Section~\ref{bound} describes $\B$ in detail, following~\cite{lee-schwermer}.  

The conjecture itself is contained and discussed in Section~\ref{conj}.

In Section~\ref{hecke} we review the Hecke operators.
Section~\ref{what} describes our method of computation, which uses the Voronoi cellulation of the symmetric space.  
We also prove in Section~\ref{what} a result of independent interest: for $n\le 4$, the Voronoi homology in its lowest degree is isomorphic to $H_0(\Gamma,\st)$.  In fact,  we prove this for the Voronoi homology and the Steinberg module with  coefficients in a general ring $R$ (not just $\Q$).  For this reason, we define the Steinberg module in  Section~\ref{mod} over a general ring $R$.

Section~\ref{comp} explains the methods used in our computations, and 
Section~\ref{results} discusses the computational results.

 Thanks to P.~Gunnells for helpful comments.  Also to D.~Doud and independently to M.~Masdeu for informing us of relevant computations of theirs.

\section{ The Steinberg module and Steinberg homology}\label{mod}

For more information about the Steinberg module than is given here, see the introduction to~\cite{APS} and its references.

Let $K$ be a field, $R$ a ring, and $n\ge2$ an integer.  Let $K^n$ be the vector space of column vectors.  

\begin{definition}\label{tits}
The Tits building $T(K^n)$ is the simplicial complex with one vertex for each subvector space $V\subset K^n$ with $0\ne V\ne K^n$, where the vertices $V_1,V_2,\dots,V_k$ span a simplex if and only if they can be arranged into a flag. 
\end{definition}

\begin{definition}\label{stein}
The Steinberg module 
$\St(K^n;R)$ is the  reduced homology of the Tits building:
\[
\St(K^n;R)=\widetilde H_{n-2}(T(K^n),R).
\]
\end{definition}

Since $\GL_n(K)$ acts on the Tits building, it also acts on $\St(K^n;R)$, making 
the Steinberg module a left-module for the group ring $R\GL_n(K)$. The Steinberg module 
$\St(K^n;\Z)$ is a free $\Z$-module, and $\St(K^n;R)\simeq\St(K^n;\Z)\otimes_\Z R$.

\begin{definition}
Let $\set{v_1,v_2, \dots,v_n}$ be a basis for $K^n$.
The modular symbol $[v_1,v_2,\dots,v_n]$ denotes the element in $\St(K^n;R)$ which is the
  fundamental class of the $(n-2)$-sphere whose vertices are the subspaces of $K^n$ generated by the proper non-empty subsets of  $\set{v_1,v_2,\dots,v_n}$.
  We may and do fix orientations on these spheres in such a way that 
  the action of an element $g\in \GL_n(K)$ on the 
symbol $[v_1,v_2,\dots,v_n]$ satisfies
\[
g[v_1,v_2,\dots,v_n]=[gv_1,gv_2,\dots,gv_n].
\]
 We extend the notation to all $n$-tuples of vectors by setting
$[w_1,w_2,\dots,w_n]=0$ when $w_1,w_2,\dots,w_n$ are linearly dependent vectors in $K^n$.
If $m$ is the matrix with columns $a_1,a_2,\dots,a_n$, we write
\[
[m]=[a_1,a_2,\dots,a_n],
\]
so that $g[m]=[gm]$ for any $g\in \GL_n(K)$. 
\end{definition}

We recall some standard facts about modular symbols and the Steinberg module.  See 
\cite{ash-rudolph}.

\begin{theorem}\label{thm:mod-props} 
Let $K$ be any field.
\begin{enumerate}
\item \label{it:gens} As abelian group, $\St(K^n;\Z)$ 
is generated by $[v_1,v_2,\dots,v_n]$ as $v_1,v_2,\dots,v_n$ range over
  all elements of $K^n$.
\item  \label{it:relations} The following relations hold:
\begin{enumerate}
\item  $[v_1,v_2,\dots,v_n]=0$ if  $v_1,v_2,\dots,v_n$ do not span $K^n$.
\item $[v_1,v_2,\dots,v_n]=[kv_1,v_2,\dots,v_n]$ for any nonzero $k\in K$;
\item $[v_1,v_2,\dots,v_n]=(-1)^s[v_{s(1)},v_{s(2)}\dots,v_{s(n)}]$ for any permutation $s \in S_n$;
\item $[v_1,v_2,\dots,v_n]=[x,v_2,\dots,v_n]+\cdots+
[v_1,\dots,v_{i-1},x,v_{i+1},\dots, v_n]+\newline\quad \dots+[v_1,v_2,\dots,v_{n-1},x]$ for any nonzero $x\in K^n$.
\end{enumerate}

\item\label{it:unipotent}  $\St(K^n;\Z)$ has as a 
free $\Z$-basis the symbols $[u]$, where $u$ runs over all upper triangular unipotent matrices 
in $\GL_n(K)$. 
\end{enumerate} 
\end{theorem}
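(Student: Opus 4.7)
The plan is to follow the classical approach of Solomon--Tits and Ash--Rudolph, treating the three parts in order.

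For part~(\ref{it:gens}), I would invoke the Solomon--Tits theorem, which asserts that $T(K^n)$ is homotopy equivalent to a bouquet of $(n-2)$-spheres, the representatives being the apartments attached to frames in $K^n$. By construction, the apartment of a frame $\{v_1,\ldots,v_n\}$ is the $(n-2)$-sphere whose vertices are the subspaces spanned by proper nonempty subsets of the frame, and its fundamental class is $[v_1,\ldots,v_n]$. Hence these apartment classes generate $\widetilde H_{n-2}(T(K^n);\Z)$, which is part~(\ref{it:gens}).

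For part~(\ref{it:relations}), (a) is the extended definition. Relations (b) and (c) are immediate from the apartment description: scaling $v_i$ by $k\in K^\times$ fixes every vertex of the sphere, while a permutation $\sigma\in S_n$ acts by a simplicial automorphism of the apartment whose effect on the fundamental class is $(-1)^\sigma$. Relation (d) is the substantive one; I would fix a nonzero $x$ and analyze the subcomplex $\Sigma\subset T(K^n)$ generated by the $n+1$ lines $\langle v_1\rangle,\ldots,\langle v_n\rangle,\langle x\rangle$ together with all of their partial sums. In generic position, $\Sigma$ carries a canonical $(n-1)$-ball whose cellular boundary rearranges to
\[
[v_1,\ldots,v_n] \;=\; \sum_{i=1}^{n}[v_1,\ldots,v_{i-1},x,v_{i+1},\ldots,v_n]
\]
up to sign, and the degenerate configurations (where $x$ sits inside a proper face of the frame) reduce to (a). Maintaining a consistent global choice of orientations, so that these signs come out uniformly, is the chief bookkeeping point.

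For part~(\ref{it:unipotent}), generation is proved by an explicit column-reduction algorithm: given $g\in\GL_n(K)$, one uses (b), (c), and (d) to simulate the column operations that convert $g$ into upper triangular unipotent form, expressing $[g]$ as a signed sum of symbols $[u]$ with $u$ upper triangular unipotent. The main obstacle is linear independence of the resulting family $\{[u]\}$. I would establish it by induction on $n$ via a Bruhat-style stratification of $T(K^n)$, with base case $n=2$ given by the classical identification $\St(K^2;\Z)\simeq \widetilde H_0(\PP^1(K);\Z)$, under which $[u]$ for $u=\bigl(\begin{smallmatrix}1&a\\0&1\end{smallmatrix}\bigr)$ corresponds to the reduced $0$-chain $[1:0]-[a:1]$, manifestly free on the parameter $a\in K$. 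For the inductive step I would filter $T(K^n)$ by the flags containing a chosen hyperplane, relate the quotient stratum to $\St(K^{n-1};\Z)$, and check that the change-of-basis matrix between the unipotent symbols and a basis produced by induction is block-triangular with $\pm 1$ diagonal. Tracking orientations through this induction is where the argument requires the most care.
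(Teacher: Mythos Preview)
The paper does not prove this theorem; it records it as standard background and cites Ash--Rudolph. Your outline is in that spirit and is largely sound, but one step in your treatment of relation (d) does not work as written.

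You place an $(n-1)$-ball inside the subcomplex $\Sigma\subset T(K^n)$. That is impossible: $T(K^n)$ has dimension $n-2$, so $\Sigma$ contains no $(n-1)$-cells, and there is nothing whose cellular boundary could produce the relation. In fact, since there are no $(n-1)$-chains at all, $\widetilde H_{n-2}(T(K^n))$ equals the group of $(n-2)$-cycles, so (d) must already hold as an identity of chains, not merely of homology classes. The argument is a direct cancellation. Write $[v_1,\dots,v_n]=\sum_{\sigma\in S_n}(-1)^\sigma F_\sigma$, where $F_\sigma$ is the maximal simplex $\langle v_{\sigma(1)}\rangle\subset\cdots\subset\langle v_{\sigma(1)},\dots,v_{\sigma(n-1)}\rangle$, and expand every term on the right of (d) the same way. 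The flag $F_\sigma$ does not involve $v_{\sigma(n)}$, so it reappears unchanged in the right-hand summand where $x$ sits in position $\sigma(n)$; after this cancellation, each leftover flag on the right occurs twice with opposite signs, paired by the transposition exchanging the values in positions $k$ and $n$ of $\sigma$ (for the relevant $k<n$). If instead you had in mind working one degree higher in an auxiliary complex such as the Sharbly complex, beware that this is circular: identifying the bottom homology of that complex with $\St(K^n;\Z)$ already presupposes (d).

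Your plan for part~(\ref{it:unipotent}) is correct; it is a repackaging of the Solomon--Tits retraction onto a fixed chamber, which in fact delivers freeness on the unipotent symbols in one stroke, so a separate independence argument is unnecessary.
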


We call the fourth relation ``passing through $x$''.
 
We need the following theorem in the form stated, in order to apply it to the Voronoi cellulation.  It differs only slightly from an equivalent theorem proved by Bykovskii \cite[Theorem~1]{By} and follows immediately from it.   See~\cite{CFP} for a different proof of this theorem and also related results for the Steinberg module of vector spaces over fields other than $\Q$.

\begin{theorem}\label{by}   The Steinberg module 
 $\St(\Q^n;\Z)$ is isomorphic to the quotient of the free abelian group generated by symbols $\gen{a_1,a_2,\dots,a_n}$ for all ordered $\Z$-bases 
 $\set{a_1,a_2,\dots,a_n}$ 
 of $\Z^n$ modulo the following relations:
  \begin{enumerate}
  \item  $\gen{a_1,a_2,\dots,a_n}=(-1)^s\gen{a_{s(1)},a_{s(2)},\dots,a_{s(n)}}$ 
    for any permutation $s \in S_n$;
    \item $ \gen{-a_1,a_2,\dots,a_n}=\gen{a_1,a_2,\dots,a_n}$, for all $\Z$-bases 
  $\set{a_1,a_2,\dots,a_n}$ of $\Z^n$;
  \item  $\gen{a,b,a_3,\dots,a_n}+\gen{-b,a+b,a_3,\dots,a_n}+\gen{a+b,-a,a_3,\dots,a_n}=0$, for all $\Z$-bases 
  $\set{a,b,a_3,\dots,a_n}$ of $\Z^n$.
  \end{enumerate}
  The isomorphism is given by $\gen{a_1,a_2,\dots,a_n} \mapsto [a_1,a_2,\dots,a_n]$. \end{theorem}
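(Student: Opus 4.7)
The plan is to deduce the theorem from Bykovskii's presentation by exhibiting a map to $\St(\Q^n;\Z)$ from the quotient in the statement and then verifying that the relations match those of Bykovskii. Specifically, I would define
\[
\phi\colon F/(\text{relations (1)--(3)}) \longrightarrow \St(\Q^n;\Z), \qquad \gen{a_1,\dots,a_n}\mapsto [a_1,\dots,a_n],
\]
where $F$ is the free abelian group on the symbols $\gen{a_1,\dots,a_n}$ indexed by ordered $\Z$-bases of $\Z^n$. The first task is to check that $\phi$ is well-defined: relation (1) is immediate from Theorem~\ref{thm:mod-props}(\ref{it:relations})(c), and relation (2) is the case $k=-1$ of Theorem~\ref{thm:mod-props}(\ref{it:relations})(b).

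The only relation requiring real work to verify is (3). For this I would apply the passing-through-$x$ identity of Theorem~\ref{thm:mod-props}(\ref{it:relations})(d) with $x=a+b$ to the modular symbol $[a,b,a_3,\dots,a_n]$. Every term in the resulting expansion in which $a+b$ replaces $a_i$ for some $i\ge 3$ contains the three linearly dependent vectors $a,b,a+b$, hence vanishes by Theorem~\ref{thm:mod-props}(\ref{it:relations})(a). What remains is the three-term identity
\[
[a,b,a_3,\dots,a_n]=[a+b,b,a_3,\dots,a_n]+[a,a+b,a_3,\dots,a_n],
\]
which, after using (c) to swap arguments and (b) with $k=-1$ to change signs, becomes exactly relation (3). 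This shows $\phi$ is well-defined.

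To see $\phi$ is an isomorphism, I would invoke Bykovskii's theorem \cite[Theorem~1]{By}, which gives a presentation of $\St(\Q^n;\Z)$ on the same set of generators (ordered $\Z$-bases) modulo his own list of relations. Both the generating set and the three-term relation in the present paper are modeled on Bykovskii's, so the essential content is the same; what needs to be checked is that his relations and ours generate the same subgroup of the free abelian group $F$. In one direction, one derives Bykovskii's relations from (1)--(3) (giving a factorization of $\phi$ through his presentation, which identifies the quotient with $\St(\Q^n;\Z)$); in the other direction, one derives (1)--(3) from Bykovskii's relations, so that the two presentations coincide.

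The main obstacle is the bookkeeping of signs and symmetrizations in the comparison of the two relation sets: Bykovskii's normalization differs slightly (hence the authors' phrase that the two statements differ ``only slightly''), and one needs to use symmetry relations (1) and the sign-normalization (2) repeatedly to rewrite his three-term relation in the form (3) and vice versa. Once this comparison is carried out, injectivity and surjectivity of $\phi$ follow at once from Bykovskii's theorem.
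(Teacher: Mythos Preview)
Your proposal is correct and follows exactly the approach the paper indicates: the paper gives no independent proof but simply asserts that the statement ``differs only slightly'' from Bykovskii's Theorem~1 and ``follows immediately from it,'' which is precisely the comparison of presentations you outline. Your additional verification that relations (1)--(3) hold in $\St(\Q^n;\Z)$ via Theorem~\ref{thm:mod-props} is a helpful elaboration the paper omits, but the core argument---matching the two relation sets so that Bykovskii's theorem yields the isomorphism---is the same.
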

 
\begin{definition}
If $\Gamma$ is any subgroup of $\GL_n(\Q)$, we define the Steinberg homology of 
$\Gamma$ over $R$ to be
 $H_*(\Gamma, \St(\Q^n; R))$.
\end{definition}

 The zero-th Steinberg homology group, $H_0(\Gamma, \St(\Q^n; R))$, will be identified with the group of co-invariants $\St(\Q^n; R)_\Gamma$. 
\begin{definition}
If $\Gamma$ is a subgroup of $\GL_n(\Q)$, and $m\in M_n(\Q)$, we denote the image of 
 $[m]$ in $\St(\Q^n; R)_\Gamma$ by $[m]_\Gamma$.
\end{definition}

  \begin{corollary}
  The Steinberg homology $H_0(\Gamma, \St(\Q^n;R))$ is isomorphic to the free $R$-module
 generated by symbols $\gen{a_1,a_2,\dots,a_n}_\Gamma$
  for all ordered $\Z$-bases $\set{a_1,a_2,\dots,a_n}$ 
 of $\Z^n$ modulo the following relations:
  \begin{enumerate}
  \item  $\gen{a_1,a_2,\dots,a_n}_\Gamma=(-1)^s\gen{a_{s(1)},a_{s(2)},\dots,a_{s(n)}}_\Gamma$ 
    for any permutation $s \in S_n$;
    \item $ \gen{-a_1,a_2,\dots,a_n}_\Gamma=\gen{a_1,a_2,\dots,a_n}_\Gamma$, for all $\Z$-bases 
  $\set{a_1,a_2,\dots,a_n}$ of $\Z^n$;
  \item  $\gen{a,b,a_3,\dots,a_n}_\Gamma+\gen{-b,a+b,a_3,\dots,a_n}_\Gamma+\gen{a+b,-a,a_3,\dots,a_n}_\Gamma=0$, for all $\Z$-bases 
  $\set{a,b,a_3,\dots,a_n}$ of $\Z^n$;
  \item $ \gen{a_1,a_2,\dots,a_n}_\Gamma=\gen{\gamma a_1,\gamma a_2,\dots, \gamma a_n}_\Gamma$, for all $\Z$-bases 
  $\set{a_1,a_2,\dots,a_n}$ of $\Z^n$ and all $\gamma\in\Gamma$.
  \end{enumerate}
\end{corollary}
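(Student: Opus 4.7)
The plan is to reduce this corollary to Theorem~\ref{by} by first extending scalars from $\Z$ to $R$ and then taking $\Gamma$-coinvariants. The underlying general fact is that if a $\Z[\Gamma]$-module $M$ has a free $\Z$-module presentation $F/K$ in which $\Gamma$ permutes the free generators, then $(M \otimes_\Z R)_\Gamma$ is presented as a free $R$-module on those same generators modulo the tensored relations in $K$ together with the additional relations $s - \gamma s$ for each free generator $s$ and each $\gamma \in \Gamma$.

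First I would apply this to Bykovskii's presentation: by Theorem~\ref{by}, $\St(\Q^n;\Z)$ is the free abelian group on the symbols $\gen{a_1,\dots,a_n}$ indexed by ordered $\Z$-bases of $\Z^n$, modulo relations~(1)--(3). Extending scalars via the isomorphism $\St(\Q^n;R) \simeq \St(\Q^n;\Z) \otimes_\Z R$ noted in Section~\ref{mod} gives the free $R$-module presentation with the same generators and relations. Second, I would take $\Gamma$-coinvariants. Since the congruence subgroup $\Gamma$ in the paper sits inside $\SL_n(\Z) \subseteq \GL_n(\Z)$, any $\gamma \in \Gamma$ carries a $\Z$-basis $\set{a_1,\dots,a_n}$ to another $\Z$-basis $\set{\gamma a_1,\dots,\gamma a_n}$, and the $\GL_n(\Q)$-equivariance of the modular symbol presentation in Theorem~\ref{thm:mod-props} translates, under the isomorphism of Theorem~\ref{by}, into $\gamma \cdot \gen{a_1,\dots,a_n} = \gen{\gamma a_1,\dots,\gamma a_n}$. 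So $\Gamma$ does permute the Bykovskii generators, and the general fact produces exactly relation~(4) as the new relation needed.

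The only step with any content is the general coinvariants statement. I would verify it by writing $\St(\Q^n;R) = F/K$ with $F$ the free $R$-module on Bykovskii symbols, noting that $K$ is $\Gamma$-stable because the relations~(1)--(3) are $\GL_n(\Z)$-equivariant, and then computing the $R$-submodule $J \subset \St(\Q^n;R)$ generated by $\set{m - \gamma m \given m \in \St(\Q^n;R),\ \gamma \in \Gamma}$. By $R$-linearity and the identity $(m_1+m_2) - \gamma(m_1+m_2) = (m_1 - \gamma m_1) + (m_2 - \gamma m_2)$, together with $rm - \gamma(rm) = r(m-\gamma m)$, the submodule $J$ is already generated as an $R$-module by the elements $\gen{a_1,\dots,a_n} - \gen{\gamma a_1,\dots,\gamma a_n}$ as $\set{a_1,\dots,a_n}$ ranges over $\Z$-bases and $\gamma$ over $\Gamma$. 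I do not foresee a real obstacle: the argument is a formal combination of Theorem~\ref{by}, right-exactness of $(-) \otimes_\Z R$ applied to the Bykovskii presentation, and right-exactness of the coinvariants functor applied to the resulting $R[\Gamma]$-presentation.
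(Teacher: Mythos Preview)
Your proposal is correct and matches the paper's implicit argument: the paper states this corollary without proof, treating it as an immediate consequence of Theorem~\ref{by} together with the identification of $H_0(\Gamma,\St(\Q^n;R))$ with the coinvariants $\St(\Q^n;R)_\Gamma$ noted just before the statement. Your write-up simply makes explicit the two right-exactness steps (tensoring the Bykovskii presentation with $R$, then passing to $\Gamma$-coinvariants) that the paper leaves to the reader.
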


One reason for the importance of the Steinberg module is the Borel-Serre duality theorem~\cite[Theorem~11.4.2.]{BS}.
We quote the case of it needed for this paper:
\begin{theorem}\label{BS}
Let $\Gamma$ be a subgroup of finite index in $\SL_n(\Z)$, and let $k$ be a field whose characteristic is prime to the order of all torsion elements of $\Gamma$.  Then for any $i$, there is an isomorphism
\[
\lambda \colon H_i(\Gamma, \St(\Q^n;k)) \to H^{\nu-i}(\Gamma, k),
\]
 where $\nu=n(n-1)/2$.
\end{theorem}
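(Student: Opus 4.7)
The plan is to realize $\Gamma$ as a virtual duality group in the sense of Bieri--Eckmann, of dimension $\nu=n(n-1)/2$, with dualizing module $\St(\Q^n;\Z)$; the claimed isomorphism then follows formally. First I would reduce to the torsion-free case. By Selberg's lemma $\Gamma$ has a torsion-free finite-index subgroup $\Gamma'$, and the hypothesis on $\mathrm{char}(k)$ makes a standard transfer/averaging argument identify both $H_*(\Gamma, -)$ and $H^*(\Gamma, -)$ with the coinvariants and invariants, respectively, of the corresponding groups for $\Gamma'$ under the finite group $\Gamma/\Gamma'$. A natural $\Gamma/\Gamma'$-equivariant isomorphism for $\Gamma'$ thus descends to one for $\Gamma$, so it suffices to treat the torsion-free case.

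The key geometric input is the Borel--Serre bordification $\overline{\cS}$ of the symmetric space $\cS=\SL_n(\R)/\mathrm{SO}(n)$. This is a contractible manifold with corners of dimension $d=(n-1)(n+2)/2$ on which $\Gamma'$ acts freely and cocompactly, so $\overline{\cS}$ is a model for $E\Gamma'$, the quotient $\Gamma'\backslash\overline{\cS}$ is a compact $K(\Gamma',1)$, and $H^i(\Gamma', \Z\Gamma')\cong H^i_c(\overline{\cS}; \Z)$. Applying Poincar\'e--Lefschetz duality to this manifold with corners and using that $\overline{\cS}$ is contractible, one computes
\[
H^i_c(\overline{\cS};\Z)\cong H_{d-i}(\overline{\cS},\partial\overline{\cS};\Z)\cong \widetilde H_{d-i-1}(\partial\overline{\cS};\Z).
\]
By the Solomon--Tits theorem, $\partial\overline{\cS}$ is $\GL_n(\Q)$-equivariantly homotopy equivalent to the Tits building $T(\Q^n)$, which is a wedge of $(n-2)$-spheres. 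The above group therefore vanishes unless $d-i-1=n-2$, i.e.\ unless $i=\nu$, in which case it is $\widetilde H_{n-2}(T(\Q^n);\Z)=\St(\Q^n;\Z)$ by Definition~\ref{stein}. This exhibits $\Gamma'$ as a duality group of dimension $\nu$ with dualizing module $\St(\Q^n;\Z)$.

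Having verified the duality structure, Bieri--Eckmann duality then provides, for every $\Gamma'$-module $M$ and every $j$, a natural isomorphism $H^j(\Gamma',M)\cong H_{\nu-j}(\Gamma',\St(\Q^n;\Z)\otimes M)$. Taking $M=k$ and $j=\nu-i$, and descending from $\Gamma'$ back to $\Gamma$ via the transfer of the first step, yields the map $\lambda$. I expect the main obstacle to be the careful bookkeeping at the geometric step: one must verify $\GL_n(\Q)$-equivariance of the Solomon--Tits identification so that the dualizing module really is $\St(\Q^n;\Z)$ as a $\Gamma$-module (rather than a twist), and confirm that $\overline{\cS}$ is $\Gamma$-orientable so that no sign character enters. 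Once these compatibilities are in place, specialization to the case $R=k$ with $\mathrm{char}(k)$ prime to torsion orders, together with exactness of $-\otimes_\Z k$ on a free $\Z$-module like $\St(\Q^n;\Z)$, finishes the argument.
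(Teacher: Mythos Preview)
The paper does not prove this theorem; it simply quotes it as a special case of Borel--Serre duality, citing \cite[Theorem~11.4.2]{BS}. Your outline is precisely the standard argument behind that citation: the Borel--Serre bordification makes a torsion-free $\Gamma'$ a duality group of dimension $\nu$ with dualizing module $\St(\Q^n;\Z)$ via Solomon--Tits, and then Bieri--Eckmann duality plus transfer handles the general $\Gamma$. So your approach is not different from the paper's---it is the content of the reference the paper invokes.

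Two small points to tighten if you write this up in full. First, the descent step assumes $\Gamma'\trianglelefteq\Gamma$ so that $\Gamma/\Gamma'$ acts on $H_*(\Gamma',-)$; arrange this by replacing $\Gamma'$ with the intersection of its finitely many $\Gamma$-conjugates (or with a principal congruence subgroup). Second, for the transfer to yield the claimed isomorphism you need the Bieri--Eckmann duality isomorphism for $\Gamma'$ to be $\Gamma/\Gamma'$-equivariant; this follows from the naturality of the cap product with the fundamental class of $\Gamma'\backslash\overline{\cS}$ once you know the $\Gamma$-action on $\overline{\cS}$ is orientation-preserving, which you correctly flag as something to verify.
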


\section{The connecting homomorphism} \label{conn}

Let $G$ be a subgroup of $\GL_n(\Q)$, and let $s\in G$.  
Let $M$ be a left $\Q G$-module, and 
let $m\in M^S$.

Let $F_\bullet\to\Q$ be the standard resolution of the group $G$ by free 
$\Q G$-modules.  So $F_i$ is the $\Q$-vector space with 
 basis $(g_0,g_1,\dots,g_i)\in G^{i+1}$, and the action is given by
 \[g(g_0,g_1,\dots,g_i)=(gg_0,gg_1,\dots,gg_i).\]
 We also use the ``bar'' notation, 
 $[h_1|h_2|\cdots|h_i]=(1,h_1,h_1h_2,\dots,h_1h_2\cdots h_i)$.

In this paper, we only need to deal with $1$-cycles and their boundaries.  The cycles we require are of the form $z=(1,s)\otimes_G m=[s]\otimes_G m$.  The boundary of 
$z$ is $((s)-(1))\otimes_G m = (1)\otimes_G (s\inv-1)m$, which vanishes since $sm=m$.  
Let $\partial$ denote the connecting homomorphism in the long exact sequence of homology derived from the short exact sequence of left $\Q G$-modules
\[
0\to D\to N\to M \to 0.
\]
We compute $\partial(z)$ as follows:  Lift $m$ to $n\in N$ so that $s\inv n=n+d$, for some $d\in D$.  
Lift the cycle to the chain $[s]\otimes_G n$.  The boundary of this chain is a cycle with coefficients in $D$, and its image in $H_0(G,D)$ 
 is equal to $\partial(z)$.  So
\[\partial(z)=  
(1)\otimes_G (s\inv-1)n = (1)\otimes_G d\in D_G.\]  

\section{Unital matrices}\label{stuff}

The algorithm to be developed in Section~\ref{2}  requires us to begin with a supply of ``unital'' matrices.  We define them in this section.  If $v$ is a vector in affine space, 
$\hat v$ denotes the image of $v$ in the corresponding projective space.

First, let $n$ to be any positive integer, and let $E/\Q$ be a finite extension.  
Let $b\in E^n\setminus\Q^n$, and assume that the entries of $b$ span
$E$ over $\Q$. Suppose $g\in\GL_n(\Q)$ stabilizes $\hat b$ 
in $\PP(E^n)$.  Then there exists $x(g)\in E^\times$ such that 
\[
g b = x(g) b.
\]
The map $g\mapsto x(g)$ is a homomorphism from the stabilizer of 
$\hat b$ in $\GL_n(\Q)$ to $E^\times$.  The minimal polynomial of $x(g)$ over $\Q$ divides the characteristic polynomial of $g$. Therefore, if $g$ is also in
$\GL_n(\Z)$, then $x(g)$ must be a unit in $\O_E^\times.$

\begin{definition}\label{unital} Let $n=2$.
Let $\beta\in E\setminus\Q$.
We say that $\gamma\in\GL_2(\Z)$ is \emph{$\beta$-unital} if 
$\gamma(\beta:1)=(\beta:1)$.  If $\gamma$ is 
$\beta$-unital for some $\beta\in E\setminus\Q$, we say $\gamma$ is \emph{unital}.
\end{definition}

If $n=2$ and $E$ is a real quadratic field, then $\gamma$ is $\beta$-unital if and only if there exists an integer $k$ such that 
\[
\gamma\begin{bmatrix}
\beta\\
1
\end{bmatrix}
=\pm\epsilon^k\begin{bmatrix}
\beta\\
1
\end{bmatrix},
\]
where $\epsilon$ is the fundamental unit of $E$.

\section{\texorpdfstring{$H(\Gamma,E)$}{H(Gamma,E)}}\label{2}

Let $E$ be a real quadratic field, and let $e$ be the column vector
$\trans{(1,0,0)}$.

For any $n\ge2$, from~\cite{A} we have a filtration of  
$\St(E^n)$ that is stable 
under the natural action of $\GL_n(\Q)$:
\[
0\subset \st= \FF_0 \subseteq \FF_1 \subseteq \cdots \subseteq \FF_n= \St(E^n).
\]
\begin{definition}
$\FF_m$ is the $\Q$-span of all modular symbols $[a_1,\dots,a_{n-m},b_1,\dots,b_m]\in \St(E^n)$ where $a_i\in \Q^n$ for all $i$ and $b_j\in E^n$ for all $j$.
\end{definition}

Now set $n=3$.
From \cite[Section~6]{A}, we know the following:
\begin{itemize}
\item It turns out that  $\FF_1=\FF_2=\FF_3$, so that the filtration has only one real step:  
$\st=\FF_0\subset\FF_1=\St(E^3)$.  We denote the image of a modular symbol $[x,y,z]\in\FF_1$ in 
$\FF_1/\FF_0$ by $[x,y,z]'$.

\item The quotient $C=\FF_1/\FF_0$ is isomorphic to a direct sum of induced modules $\I(a,b)$, for $b\in\Omega$ and $a\in A(b)$, where:
we choose a set of representatives $\Omega'$ of the $\SL_3(\Z)$-orbits  of 
$\PP^2(E)\setminus\PP^2(\Q)$; for each $b'\in\Omega'$ we choose a nonzero $b\in E^3$ such that $\hat b=b'$ and let $\Omega$ be the set of these $b$'s; for each $b$, $\SL_3(\Z)_b$ denotes the stabilizer in $\SL_3(\Z)$ of $b'$;
let $A(b)'$ be a set of representatives of the $\SL_3(\Z)_b$-orbits 
of $\PP^2(\Q)$; for each $a'\in A(b)'$ we choose a nonzero 
$a\in\Q^3$ such that $\hat a = a'$ 
and let $A(b)$ be the set of these $a$'s; 
$\SL_3(\Z)_{a,b}$ denotes the stabilizer in $\SL_3(\Z)_b$ of $a'$;
and
\[
\I(a,b)=\Ind(\SL_3(\Z)_{a,b},\SL_3(\Z),\Q_{a,b}),
\]
where $\Q_{a,b}$ is the 1-dimensional $\Q$-vector space spanned by 
  $[e,a,b]$.
  
\item
In the isomorphism of  $\SL_3(\Z)$-modules
\[
C\simeq\bigoplus_{b\in\Omega}  \bigoplus_{a\in A(b)} \I(a,b)
\]
we may assume that each $b$ has the form
$b=\trans{(\beta, 1, 0)}$
for some $\beta\in E\setminus\Q$, and
each $a$ has the form
$a=\trans{(a_1,a_2,1)}$
for some $a_1, a_2\in\Q$.
\end{itemize}

To determine $C$ as a $\Gamma$-module,
restrict each $\I(a,b)$ to $\Gamma$ and use the double coset formula for the restriction of an induced module (e.g., \cite[page 69]{B}).
We obtain the following theorem:
\begin{theorem}
As a $\Gamma$-module, 
\[
 C \simeq \bigoplus_{b\in\Omega}  \bigoplus_{a\in A(b)} \bigoplus_{d\in E(a,b)} \I(a,b,d),
\]
  where
   $E(a,b)$ is a set of representatives of the double cosets 
  $\Gamma\backslash \SL_3(\Z)/ \SL_3(\Z)_{a,b}$ and
\[
\I(a,b,d)=\Ind(\Gamma\cap d\SL_3(\Z)_{a,b}d\inv,\Gamma,d\Q_{a,b}).
\]
Here, $d\Q_{a,b}$ denotes the module $\Q_{a,b}$ where $dgd\inv\in d\SL_3(\Z)_{a,b}d\inv$ acts via the formula
$dgd\inv(r)=gr$.  
  \end{theorem}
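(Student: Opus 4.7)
The plan is to deduce the theorem as an immediate consequence of Mackey's double coset formula applied to the $\SL_3(\Z)$-module decomposition
\[
C \simeq \bigoplus_{b\in\Omega} \bigoplus_{a\in A(b)} \I(a,b)
\]
already established in~\cite{A}. Since each summand $\I(a,b) = \Ind(\SL_3(\Z)_{a,b},\SL_3(\Z),\Q_{a,b})$ is induced from a subgroup of $\SL_3(\Z)$, restriction to the finite-index subgroup $\Gamma$ decomposes termwise according to double cosets, so nothing new needs to be proved about $C$ itself—only about what happens to each factor of the known decomposition under restriction.

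First I would restrict a single summand. Applying the double coset formula for restriction of an induced representation in the form quoted in~\cite[page 69]{B} to $\I(a,b)$ yields
\[
\mathrm{Res}_\Gamma \I(a,b) \simeq \bigoplus_{d\in E(a,b)} \Ind(\Gamma \cap d\SL_3(\Z)_{a,b}d\inv, \Gamma, d\Q_{a,b}),
\]
where $E(a,b)$ is a set of representatives of $\Gamma\backslash\SL_3(\Z)/\SL_3(\Z)_{a,b}$ and $d\Q_{a,b}$ is the conjugate coefficient module. By construction its underlying $\Q$-vector space is still $\Q_{a,b}$, and the conjugate subgroup $d\SL_3(\Z)_{a,b}d\inv$ acts by the rule $dgd\inv\cdot r = gr$, which is precisely the action specified in the statement. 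The right-hand side is then exactly $\bigoplus_{d\in E(a,b)} \I(a,b,d)$.

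Finally I would sum over $b\in\Omega$ and $a\in A(b)$, which is justified because restriction commutes with direct sums, to obtain the claimed isomorphism
\[
C \simeq \bigoplus_{b\in\Omega} \bigoplus_{a\in A(b)} \bigoplus_{d\in E(a,b)} \I(a,b,d).
\]

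There is no serious obstacle here: the entire content of the theorem is Mackey decomposition applied to the already-known $\SL_3(\Z)$-structure of $C$. The only care needed is bookkeeping—verifying that the twisted coefficient module produced by Mackey's formula is literally the $d\Q_{a,b}$ defined in the statement, and tracking how the triple index $(a,b,d)$ parametrizes summands after the second layer of orbit decomposition (the double cosets $E(a,b)$) introduced by passing from $\SL_3(\Z)$ to $\Gamma$.
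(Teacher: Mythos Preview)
Your proposal is correct and matches the paper's approach exactly: the paper simply states that one restricts each $\I(a,b)$ to $\Gamma$ and applies the double coset formula for the restriction of an induced module (citing the same reference \cite[page 69]{B}), yielding the theorem immediately. There is no additional argument in the paper beyond what you have written.
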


Trace back $d\Q_{a,b}$ via the isomorphism
$
 C \simeq \bigoplus_{b\in\Omega}  \bigoplus_{a\in A(b)} \bigoplus_{d\in E(a,b)} \I(a,b,d)
  $ 
  in order to view it as a subspace of $C$.  When we do that, we find
that it becomes the
$1$-dimensional $\Q$-vector space spanned by 
  $m=d[e,b,a]'$, and the action of $dgd\inv\in d\SL_3(\Z)_{a,b}d\inv$ is given by 
  $dgd\inv \cdot m=dgm$ (which is the way $dgd\inv$ naturally acts on $d[e,b,a]'$.)  This proves:
  
  \begin{corollary}\label{c}
As a $\Gamma$-module, 
  \[
 C= \bigoplus_{b\in\Omega}  \bigoplus_{a\in A(b)} \bigoplus_{d\in E(a,b)} \I'(a,b,d),
  \] 
  where
   $E(a,b)$ is a set of representatives of the double cosets 
  $\Gamma\backslash \SL_3(\Z)/ \SL_3(\Z)_{a,b}$ and
  \[
\I'(a,b,d)=\Ind(\Gamma\cap d\SL_3(\Z)_{a,b}d\inv,\Gamma,\Q d[e,b,a]').
\]
  \end{corollary}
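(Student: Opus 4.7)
The plan is to deduce the Corollary from the preceding Theorem by replacing the abstractly twisted inducing module $d\Q_{a,b}$ with the concrete one-dimensional subspace $\Q d[e,b,a]'\subset C$. Since induction is functorial in the coefficient module, it suffices to exhibit, for each triple $(a,b,d)$, an isomorphism of $\Q[\Gamma\cap d\SL_3(\Z)_{a,b}d\inv]$-modules
\[
\phi\colon d\Q_{a,b} \xrightarrow{\ \sim\ } \Q d[e,b,a]',
\]
where the right-hand side carries the action restricted from the natural $\Gamma$-action on $C$.

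The obvious candidate is $\phi(r)=dr$, i.e.\ multiplication by $d\in\SL_3(\Z)$ inside $C$. Both modules are one-dimensional over $\Q$; the target is nonzero because $b\in E^3\setminus\Q^3$ forces $[e,b,a]'$, hence $d[e,b,a]'$, to be a nonzero element of $C=\FF_1/\FF_0$; and $d$ acts invertibly on $C$, so $\phi$ is a $\Q$-linear isomorphism. The substantive point is equivariance: for $h=dgd\inv$ with $g\in\SL_3(\Z)_{a,b}$, the action on $d\Q_{a,b}$ is given by $h\cdot r=gr$, whereas the action on $\Q d[e,b,a]'$ is the restriction of the $\Gamma$-action on $C$. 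One checks
\[
\phi(h\cdot r)=\phi(gr)=d(gr)=(dg)r=(dgd\inv)(dr)=h\cdot\phi(r),
\]
using only associativity of the $\GL_3(\Q)$-action on $\St(E^3)$.

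Applying $\Ind(\Gamma\cap d\SL_3(\Z)_{a,b}d\inv,\Gamma,-)$ to $\phi$ then produces an isomorphism $\I(a,b,d)\simeq\I'(a,b,d)$ of $\Q\Gamma$-modules, and summing over $(b,a,d)\in\Omega\times A(b)\times E(a,b)$ converts the Theorem's decomposition into the one asserted by the Corollary. The only place where care is needed is unwinding the twisted action $dgd\inv\cdot r=gr$ on $d\Q_{a,b}$ and matching it with the untwisted $\Gamma$-action on $\Q d[e,b,a]'$; the choice $\phi(r)=dr$ is precisely what makes this match up, and once that identification is in place everything else is formal bookkeeping with the direct sum and the induction functor.
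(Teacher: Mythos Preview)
Your proof is correct and follows essentially the same approach as the paper: you identify the abstractly twisted inducing module $d\Q_{a,b}$ with the concrete line $\Q d[e,b,a]'\subset C$ via $r\mapsto dr$, verify equivariance by the calculation $(dgd^{-1})(dr)=d(gr)$, and then apply functoriality of induction and sum. The paper phrases the same step as ``tracing back $d\Q_{a,b}$ via the isomorphism to view it as a subspace of $C$'' and observes that the resulting action $dgd^{-1}\cdot m=dgm$ coincides with the natural $\Gamma$-action on $d[e,b,a]'$, which is exactly your equivariance check written in the other direction.
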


We now compute $H_1(\Gamma, C)$.

\begin{theorem}\label{ab}
The group $\Gamma\cap d\SL_3(\Z)_{a,b}d\inv$ is infinite cyclic modulo a subgroup of order $2$.
\end{theorem}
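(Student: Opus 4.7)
The plan is to reduce to analyzing $\SL_3(\Z)_{a,b}$ itself. Since $\Gamma$ has finite index in $\SL_3(\Z)$, the subgroup $\Gamma \cap d \SL_3(\Z)_{a,b} d\inv$ has finite index in $d\SL_3(\Z)_{a,b} d\inv$, which is isomorphic to $\SL_3(\Z)_{a,b}$ via conjugation. Thus it suffices to show that $\SL_3(\Z)_{a,b}$ itself is infinite cyclic modulo a subgroup of order at most $2$.

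Using the shapes $b = \trans{(\beta, 1, 0)}$ and $a = \trans{(a_1, a_2, 1)}$, I first pin down the block form of any $g \in \SL_3(\Z)_{a,b}$. The equation $gb = \mu b$ combined with $\beta \notin \Q$ forces the bottom row of $g$ to be $(0, 0, *)$; the top-left $2\times 2$ block $M_\mu$ must represent multiplication by $\mu$ on the $\Z$-lattice $\Z\beta+\Z$; and the determinant condition pins the $(3,3)$-entry as $N(\mu)\inv$. Integrality of $g$ then forces $\mu \in \O^\times$, where $\O = \set{\xi \in E \given \xi(\Z\beta+\Z) \subseteq \Z\beta+\Z}$ is an order in $\O_E$, and makes the upper part of the third column some vector $v \in \Z^2$. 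Imposing additionally that $ga = \lambda a$ pins down $\lambda = N(\mu)\inv$ and $v = v_\mu := (N(\mu)\inv I - M_\mu)\trans{(a_1, a_2)}$. Hence $g$ is entirely determined by $\mu$, giving a homomorphism $x_b \colon \SL_3(\Z)_{a,b} \to \O^\times$. This is injective: if $\mu = 1$ then $\lambda = 1$, and writing $b = b_1 + \sqrt{\Delta}\, b_2$ with $b_1, b_2 \in \Q^3$ and using $\Q$-linearity of $g$ gives $gb_1 = b_1$, $gb_2 = b_2$, $ga = a$; a direct determinant computation shows $\set{a, b_1, b_2}$ is a $\Q$-basis of $\Q^3$, forcing $g = I$.

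The main step is to show that the image of $x_b$ has finite index in $\O_E^\times$. Let $D$ be a common denominator of $a_1$ and $a_2$. The map $\mu \mapsto (N(\mu), M_\mu \bmod D)$ is a homomorphism from $\O^\times$ into the finite group $\set{\pm 1} \times \GL_2(\Z/D)$, so its kernel $H'$ has finite index in $\O^\times$. For $\mu \in H'$ we have $N(\mu) = 1$ and $M_\mu - I \in D\cdot M_2(\Z)$, whence $v_\mu = (I - M_\mu)\trans{(a_1, a_2)} \in \Z^2$; thus the corresponding $g_\mu$ lies in $\SL_3(\Z)_{a,b}$ and $H' \subseteq x_b(\SL_3(\Z)_{a,b})$. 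Since $[\O_E^\times : \O^\times] < \infty$ and $\O_E^\times \cong \Z \times \set{\pm 1}$ by Dirichlet's unit theorem, the image $x_b(\SL_3(\Z)_{a,b})$ is a finite-index subgroup of $\Z \times \set{\pm 1}$, and therefore infinite cyclic modulo a subgroup of order at most $2$. The conclusion for $\Gamma \cap d \SL_3(\Z)_{a,b} d\inv$ now follows from the initial reduction. The main obstacle is this congruence-subgroup argument supplying enough units to force the image to have rank one; everything else is linear algebra with the explicit shapes of $a$ and $b$.
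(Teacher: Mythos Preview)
Your argument is correct and follows the same skeleton as the paper's: reduce by finite index to $\SL_3(\Z)_{a,b}$, use the shapes of $b=\trans{(\beta,1,0)}$ and $a=\trans{(a_1,a_2,1)}$ to force the block form, and observe that the element is determined by the unit $\mu\in\O_E^\times$ appearing as eigenvalue on $b$. The paper's proof stops after noting that $h$, $a$, $\epsilon$ determine $u$ uniquely, concluding directly that $\SL_3(\Z)_{a,b}$ is infinite cyclic modulo order~$2$; but as you implicitly recognized, this only gives an \emph{injection} into the group of $\beta$-unital $h$'s, and one still needs to know the image is infinite (equivalently, that the forced $u=(\epsilon I-h)\trans{(a_1,a_2)}$ actually lands in $\Z^2$ for infinitely many unital $h$, which is not automatic since $a_1,a_2\in\Q$). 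Your congruence argument---passing to units $\mu$ with $N(\mu)=1$ and $M_\mu\equiv I\pmod D$, where $D$ clears the denominators of $a_1,a_2$---supplies exactly this missing step. So your proof is the same as the paper's, but more complete on the one point you correctly flagged as the main obstacle.
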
 

\begin{proof}
We first work out $\SL_3(\Z)_{a,b}$.  Since 
$b=\trans{(\beta,1,0)}$
and $\beta,1$ are linearly independent over $\Q$, an easy computation shows that 
\[
\SL_3(\Z)_{b} = \{M(h,u)\}
\]
where in $(2,1)$ block form
\[
M(h,u)=\begin{bmatrix}
h&u\\
0&\epsilon
\end{bmatrix}\in\SL_3(\Z),
\]
$u\in\Z^2$, $\epsilon=\pm1$ is chosen to make the determinant equal to $1$, and 
\[
h\begin{bmatrix}\beta\\1\end{bmatrix} = \eta\begin{bmatrix}\beta\\1\end{bmatrix} 
\]
for some $\eta\in E^\times$.  
Because $h\in\SL_2(\Z)$, 
$\eta\in \O_E^\times$ is a unit.  In other words, $h$ is ``$\beta$-unital''.  Note that $h\mapsto\eta$ is one-to-one, so that the $h$'s form a cyclic group modulo $\pm I_2$. 
  
 Now suppose that $M(h,u)$ also stabilizes the $\Q$-line through $a$.  Then $a$ must be an eigenvector of $M(h,u)$ with 
 eigenvalue $\epsilon$.
This happens if and only if
  \[
  h\begin{bmatrix}a_1\\a_2\end{bmatrix} + u = 
  \epsilon\begin{bmatrix}a_1\\a_2\end{bmatrix}. 
  \]
  Therefore $h$, $a$, and  $\epsilon$ determine $u$ uniquely.  It follows that 
  $\SL_3(\Z)_{a,b}$ is infinite cyclic modulo a subgroup of order $2$.
  
  To finish the proof, note that $d\inv \Gamma d\cap \SL_3(\Z)_{a,b}$ has finite index in
  $\SL_3(\Z)_{a,b}$ because $d\inv \Gamma d$ has finite index in $\SL_3(\Z)$.
  \end{proof}
  
  By Shapiro's lemma, $H_1(\Gamma, \I'(a,b,d))$ is isomorphic to 
$H_1( \Gamma\cap d\SL_3(\Z)_{a,b}d\inv,  \Q d[e,b,a]'))$. 
By Theorem~\ref{ab},
 if $\gamma$ is any non-torsion element of the group $\Gamma\cap d\SL_3(\Z)_{a,b}d\inv$, then  
$H_1( \Gamma\cap d\SL_3(\Z)_{a,b}d\inv,   \Q_{a,b})$ is the one-dimensional 
$\Q$-vector space generated by 
\[[\gamma]\otimes_{\Gamma\cap d\SL_3(\Z)_{a,b}d\inv} d[e,b,a]'.\]

\begin{definition}\label{hge}
$H(\Gamma,E)$  is the image of the connecting homomorphism 
\[\psi\colon H_1(\Gamma,C)\to H_0(\Gamma,\st).\]
\end{definition}

\begin{theorem}
Let $\Gamma$ be a subgroup of finite index in $\SL_3(\Z)$, and let $E$ be a real quadratic field.
Then $H(\Gamma,E)$  is the $\Q$-span of all 
 $[f,\gamma f, da]_\Gamma$, where $f,a\in\Z^3$, $d\in\SL_3(\Z)$, and 
 $\gamma\in \Gamma$ arise from all possible choices in the following steps:
\begin{description}
\item[Step 1] Choose a unital $h$, choose $u\in\Z^2$ and set 
$\epsilon=\det(h)$.
\item[Step 2] Form $M(h,u)$, and find an eigenbasis for it of the form
$\set{b,b',a}$, where $b'$ is the Galois conjugate of $b$.  
\item[Step 3] Choose $d\in E(a,b)$.  
\item[Step 4] Find the smallest positive power of $dM(h,u)d\inv$ which lies in $\Gamma$.  
Call this power $\gamma$.  
\item[Step 5] Set $f=de$, where $e$ is the first standard basis vector of $\Q^3$.
\end{description}
\end{theorem}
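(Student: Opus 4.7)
The plan is to unwind Corollary~\ref{c} summand by summand, then combine Shapiro's lemma, Theorem~\ref{ab}, and the explicit description of $\partial$ from Section~\ref{conn}. By Shapiro, $H_1(\Gamma, \I'(a, b, d))$ equals $H_1(\Gamma \cap d\SL_3(\Z)_{a, b}d\inv, \Q d[e, b, a]')$, and by Theorem~\ref{ab} this group is one-dimensional, generated by the cycle $z = [\gamma] \otimes d[e, b, a]'$ for any non-torsion $\gamma$ in $\Gamma \cap d\SL_3(\Z)_{a, b}d\inv$. Writing $\gamma = dMd\inv$ with $M = M(h, u) \in \SL_3(\Z)_{a, b}$, the smallest positive power of $dM(h,u)d\inv$ lying in $\Gamma$ produces such a canonical generator, which matches Steps~3--4 of the statement. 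Thus $H(\Gamma, E)$ is the $\Q$-span of the classes $\partial(z)$ as $(b, a, d, \gamma)$ ranges over these data.

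To compute $\partial(z)$, lift $m = d[e, b, a]'$ to $n = d[e, b, a] \in \St(E^3)$; the formula from Section~\ref{conn} gives $\partial(z) = (1) \otimes_\Gamma (\gamma\inv - 1) n$. Since $Mb = \eta b$ with $\eta \in \O_E^\times$ and $Ma = \pm a$, scale-invariance of modular symbols (relation~(2b) of Theorem~\ref{thm:mod-props}) yields $\gamma\inv n = d[M\inv e, b, a]$, so everything reduces to simplifying $[M\inv e, b, a] - [e, b, a]$. Here I use the key observation that the block-triangular shape of $M(h, u)$ forces the third coordinate of $M\inv e$ to vanish, so $M\inv e$ lies in the $\Q$-span of $e$ and $b = \trans{(\beta, 1, 0)}$; consequently $[M\inv e, b, e] = 0$. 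Passing through $e$ in the first slot via relation~(d) of Theorem~\ref{thm:mod-props} collapses the difference to $[M\inv e, e, a]$, which is a rational modular symbol in $\st$. Applying $d$, I obtain $\partial(z) = [dM\inv e, de, da]_\Gamma = [\gamma\inv f, f, da]_\Gamma = -[f, \gamma\inv f, da]_\Gamma$ with $f = de$.

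The final bookkeeping matches this to Steps~1--5. Replacing $\gamma$ by $\gamma\inv$, itself a generator of $\Gamma \cap d\SL_3(\Z)_{a, b}d\inv$ modulo torsion, converts $-[f, \gamma\inv f, da]_\Gamma$ into $[f, \gamma f, da]_\Gamma$, exactly as stated. Any unital $h$ and any $u \in \Z^2$ produce some $M(h, u)$ with an eigenbasis $\{b, b', a\}$ of the prescribed form; $\SL_3(\Z)$-conjugation (absorbed into the choice of $d$) identifies the resulting $(b, a)$ with a representative pair from $\Omega$ and $A(b)$ without changing the class $[f, \gamma f, da]_\Gamma$ in $H_0(\Gamma, \st)$, and scaling $a \in \Q^3$ to lie in $\Z^3$ is harmless by scale-invariance of modular symbols. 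The main obstacle is the passing-through identification in the middle paragraph: without the special position of $M\inv e$ that the block structure of $M(h, u)$ forces, the difference $[M\inv e, b, a] - [e, b, a]$ would not visibly reduce to a single rational symbol, and the image of $\partial$ would not admit such a transparent description as a span of individual modular symbols.
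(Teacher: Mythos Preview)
Your proof is correct and follows essentially the same route as the paper: identify generators of $H_1(\Gamma,C)$ summand by summand via Shapiro's lemma and Theorem~\ref{ab}, lift to $\St(E^3)$, compute the boundary as in Section~\ref{conn}, and use the passing-through relation together with the fact that $M^{-1}e$, $e$, and $b$ all lie in the plane $\{\trans{(*,*,0)}\}$ and hence are $E$-linearly dependent (your phrase ``$\Q$-span of $e$ and $b$'' is a slip, since $b\notin\Q^3$, but the conclusion $[M^{-1}e,b,e]=0$ is what you actually use and is correct). The only cosmetic difference is the last step: rather than replacing $\gamma$ by $\gamma^{-1}$, the paper rewrites $[\gamma^{-1}f,f,da]_\Gamma$ directly as $[f,\gamma f,da]_\Gamma$ using $\Gamma$-coinvariance together with $\gamma(da)=\pm da$.
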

  
  \begin{proof}
  The notation in the first three steps has already been explained.  For Step 2, note that after we have chosen $h$ and $u$ and formed $M(h,u)$, we see that $M(h,u)$ has an 
eigenvector of the form $b=\trans(\beta,1,0)$, for some $\beta \in E\setminus  \Q$. Since $M(h,u)$ is rational, $b'$ is another eigenvector.  Since $\epsilon = \pm 1$ is another eigenvalue of $M(h,u)$, it has third eigenvector which is rational and which we take to be $a$.

In Step 4, note that 
some power of $dM(h,u)d\inv$ lies in $\Gamma$ because in fact any element of $\SL_3(\Z)$ has some positive power in the finite index subgroup $\Gamma$.

To prove the theorem, take all possible elements of 
$H_1(\Gamma,C)$, and apply the connecting homomorphism 
$\psi\colon H_1(\Gamma,C)\to H_0(\Gamma,\st)$.

 Applying the map induced by inclusion of groups
 \[
 H_1(\Gamma\cap d\SL_3(\Z)_{a,b}d\inv,\Q[de,db,da]')\to H_1(\Gamma,\Q[de,db,da]'),
 \]
  the class of 
  $[\gamma]\otimes_{\Gamma\cap d\SL_3(\Z)_{a,b}d\inv} [de,db,da]'$ maps to the class of 
   $[\gamma]\otimes_\Gamma [de,db,da]'$.
  From Corollary~\ref{c} and the paragraph before the statement of the theorem,  it follows that
   the cycles $[\gamma]\otimes_\Gamma [de,db,da]'$, as $d,a,b,\gamma$ run over all ways of choosing them in Steps 1 through 4, 
   span $H_1(\Gamma,C)$ over $\Q$.  
  Therefore their images under $\psi$ will generate 
  $H(\Gamma,E)$ over $\Q$.
  
  To compute $\psi([\gamma]\otimes_\Gamma [de,da,db]')$
lift this cycle to the chain 
  $[\gamma]\otimes_\Gamma [de,da,db]$, recalling that  $[de,da,db]$ is a modular 
  symbol in $\St(E^3)$.  Note that $\gamma\inv da$ is a multiple of $da$, and $\gamma\inv db$ is a multiple of $db$. Therefore 
\[[\gamma\inv de,\gamma\inv da,\gamma\inv db]=
 [\gamma\inv de,da,db].\]
  
Take the boundary of this chain to obtain
\[
  (\gamma\inv-1)[de,da,db]=[\gamma\inv de, da, db] - [de, da, db]= 
 - [\gamma\inv de,de,da].
\]
The last equality follows by passing $de$ through 
$[\gamma\inv de, da, db]$ and noting that $[\gamma\inv de,de,db]=0$ because 
$\gamma\inv de,de,db$ are not linearly independent over $E$.\footnote{They span a plane in $E^3$ for the following reason.  Note that $\gamma= dM(h,u)^kd\inv$ for some $k>0$.  Therefore $\gamma da=\lambda da$, 
$\gamma db=\mu db$, and $\gamma de = d\theta e$ for some $\lambda,\mu \in E$ 
where we have set $\theta=M(h,u)^k$.  Then $e$,$\theta e$, and $b$ span a plane 
because $b=\trans{(*,*,0)}$.  Therefore the columns of 
 $\gamma[\gamma\inv de,de,db]=  [de, d\theta e, \mu db] = d[e, \theta e, \mu b]$ also span a plane.}

Projecting to the coinvariants, we find that 
 \[-\psi([\gamma]\otimes_\Gamma [de,da,db]') =
 [\gamma\inv de,de,da]_\Gamma= [de,\gamma de,da]_\Gamma,\] since $\gamma da$ is a multiple of $da$.
 If we set $f=de$, we can write the final answer as 
 $[f,\gamma f, da]_\Gamma$.  
  \end{proof}
 
 \begin{remark}
Actually, we need not take $\gamma$ to be the smallest positive power that lies in $\Gamma$.  Any positive power which does so will give a result which is a nonzero rational multiple of what is obtained with the smallest power because of the following lemma.
 \end{remark}
 
\begin{lemma}
 For any $m>0$, $[f,\gamma^m f, da]_\Gamma=m [f,\gamma f, da]_\Gamma$.
 \end{lemma}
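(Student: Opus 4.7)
The plan is to proceed by induction on $m$. The case $m=1$ is trivial, so suppose the identity holds for $m-1$ with $m \ge 2$, and consider $[f,\gamma^m f, da]_\Gamma$.

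I would begin by applying the ``passing through $x$'' relation of Theorem~\ref{thm:mod-props}(d) with $x = \gamma f$ to split
\[
[f,\gamma^m f, da] = [\gamma f,\gamma^m f, da] + [f,\gamma f, da] + [f,\gamma^m f,\gamma f].
\]
The crucial observation is that the third term vanishes. By construction $\gamma = dM(h,u)^K d^{-1}$ for some positive integer $K$, and the $(2,1)$ block form of $M(h,u)$ shows that $M(h,u)$ preserves the $\Q$-plane $\Q e_1 + \Q e_2$ and sends $e = e_1$ into it. Therefore $f = de$, $\gamma f = dM(h,u)^K e$, and $\gamma^m f = dM(h,u)^{mK} e$ all lie in the two-dimensional $\Q$-subspace $d(\Q e_1 + \Q e_2)$ of $E^3$; they are consequently $E$-linearly dependent, so $[f,\gamma^m f,\gamma f] = 0$ by the convention extending modular symbols to linearly dependent tuples.

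For the first term, recall from the proof of Theorem~\ref{ab} that $a$ is an eigenvector of $M(h,u)$ with eigenvalue $\epsilon = \pm 1$, so $\gamma da = \epsilon^K da$ and in particular $\gamma^{-1} da = \pm da$. Using $\GL_3(\Q)$-equivariance of the bracket together with the scaling relation (b) of Theorem~\ref{thm:mod-props},
\[
[\gamma f,\gamma^m f, da] = \gamma\cdot[f,\gamma^{m-1} f,\gamma^{-1} da] = \gamma\cdot[f,\gamma^{m-1} f, da].
\]
Since $\gamma \in \Gamma$, this term is equal to $[f,\gamma^{m-1} f, da]$ modulo the $\Gamma$-action, so projecting to coinvariants yields
\[
[f,\gamma^m f, da]_\Gamma = [f,\gamma^{m-1} f, da]_\Gamma + [f,\gamma f, da]_\Gamma,
\]
and the inductive hypothesis finishes the argument. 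No step is a serious obstacle; the only place where something could go wrong is the vanishing of $[f,\gamma^m f,\gamma f]$, and that reduces to the explicit block description of $M(h,u)$ coming from Step~2 of the preceding theorem.
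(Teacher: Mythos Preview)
Your proof is correct and follows essentially the same route as the paper's: both arguments use that $f,\gamma f,\dots,\gamma^m f$ all lie in the plane $d(\Q e_1+\Q e_2)$ to kill the ``extra'' term after a pass-through, and both use $\gamma\in\Gamma$ together with $\gamma^{\pm1}da=\pm da$ to reduce $[\gamma f,\gamma^m f,da]_\Gamma$ to $[f,\gamma^{m-1}f,da]_\Gamma$, yielding the same recursion $[f,\gamma^m f,da]_\Gamma=[f,\gamma^{m-1}f,da]_\Gamma+[f,\gamma f,da]_\Gamma$. The only cosmetic difference is that the paper first treats $m=2$ by passing $f$ through $[\gamma f,\gamma^2 f,da]_\Gamma$ and then writes ``and so on,'' whereas you pass $\gamma f$ through $[f,\gamma^m f,da]$ and phrase the whole thing as a clean induction.
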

 
 \begin{proof}
Note that for some $k>0$, $[f,\gamma f, da]_\Gamma=
 [\gamma f,\gamma^2 f, \gamma da]_\Gamma=
  [\gamma f,\gamma^2 f, dM(h,u)^kd\inv da]_\Gamma$.  This in turn equals  $[\gamma f,\gamma^2 f, da]_\Gamma$ because $a$ is an eigenvector for $M(h,u)$ with eigenvalue $\epsilon=\pm1$, and replacing a column in a modular symbol by a multiple of itself does not change the value of the symbol.
Then by passing $f$ through $[\gamma f,\gamma^2 f, da]_\Gamma$, we find it is equal to 
$[f,\gamma^2 f,da]_\Gamma+[\gamma f, f, da]_\Gamma+
[\gamma f,\gamma^2 f,f]_\Gamma$.  

The last term is 0 because $f,\gamma f$ and $\gamma^2f$ all lie in a plane.  To see this, let $H$ be the plane in $\Q^3$ consisting of vectors whose third coordinate is 0.
Then $M(h,u)$ stabilizes $H$.  So $\gamma=dM(h,u)^kd\inv$ stabilizes $dH$.
But $f\in dH$.  So $f, \gamma f$ and 
$\gamma^2 f$ are all in $dH$.

We conclude that $[f,\gamma^2 f, da]_\Gamma=2[f,\gamma f, da]_\Gamma$.
Similarly, one proves that
\begin{align*}
  [f,\gamma^3 f, da]_\Gamma&=
  [\gamma^2 f,\gamma^3 f, da]_\Gamma+[f,\gamma^2 f, da]_\Gamma\\
  &=
  [f,\gamma f, da]_\Gamma+2[f,\gamma f, da]_\Gamma\\
  &=3[f,\gamma f, da]_\Gamma,\end{align*}
and so on.
\end{proof}

We can simplify the algorithm by avoiding the need to compute the set of double coset representatives $E(a,b)$, as follows:  
 In Step 3, $d$ is a ``random'' double coset representative, so  a priori it can be any element of $\SL_3(\Z)$.  So replace Step 3 with
 \begin{description}
 \item[Step $3'$] Choose $d\in\SL_3(\Z)$.
 \end{description}
 But now we can limit the choice of $d$ as follows.    If we multiply $d$ on the left by an element $x$ of $\Gamma$ then the new $d$ is $xd$, the new $f$ is $xf$, the new 
  $\gamma$ is 
  $x\gamma x\inv$, and we get the output
  $[xf,x\gamma x\inv (xf), xda]_\Gamma = [f,\gamma f, da]_\Gamma$.
   So the output doesn't change if we replace $d$ by $xd$.
  
On the other hand, let $P_3$ be the subgroup of $\SL_3(\Z)$ that stabilizes the plane ${}^t(*,*,0)$.  Let $p\in P_3$ and multiply $d$ on the right by $p=\begin{bmatrix}
A&V\\
0&E
\end{bmatrix}$. 
 Let 
 \[
 pM(h,u)p\inv=\begin{bmatrix}
A&V\\
0&E
\end{bmatrix}
\begin{bmatrix}
h&u\\
0&\epsilon
\end{bmatrix}
\begin{bmatrix}
A&V\\
0&E
\end{bmatrix}\inv=
\begin{bmatrix}
h'&u'\\
0&\epsilon
\end{bmatrix}.
\]
Now $h'=AhA\inv$ is again unital, and $u'$ is still in $\Z^2$.

Let $U$ denote the set of unital matrices.  As $(h,u)$ ranges over 
$U\times \Z^2$, so does  $(h',u')$ range over 
$U\times \Z^2$.
We claim that the output of the algorithm for $(dp,h,u)$ is equal to the output for $(d,h',u')$.   Assuming the claim, 
it follows that 
 without loss of generality we can change Step $3'$ to 
 \begin{description}
\item[Step $3''$] Let $d$ range over a (finite) set of representatives of the double cosets 
 $\Gamma\backslash \SL_3(\Z)/P_3$.  
 \end{description}
This is the form of Step 3 that we use in our computations.

To check the claim:  The output for $(dp,h,u)$ is 
\[\mu_1=[dpe,\gamma dpe,dpa]_\Gamma,\]
where $\gamma$ is the smallest power of $dpM(h,u)p\inv d\inv$ which lies in 
$\Gamma$, and $a$ is the rational eigenvector of $M(h,u)$.
The output for $(d,h',u')$ is 
\[\mu_2=[de,\gamma' de,da']_\Gamma,\]
where $\gamma'$ is the smallest power of $dM(h',u')d\inv$ which lies in 
$\Gamma$, and $a'$ is the rational eigenvector of $M(h',u')$.
The claim follows if we can show that $\mu_1=\mu_2$.

Now $pM(h,u)p\inv=M(h',u')$ so that $a'=pa$ and $\gamma'=\gamma$.  Therefore 
\[\mu_1=[dpe,\gamma dpe,dpa]_\Gamma \quad
\text{and}\quad  \mu_2 = [de,\gamma de,dpa]_\Gamma.
\]
The equality of $\mu_1$ and $\mu_2$ follows from the following lemma applied to $V=dH$, $x=dpe$, $y=de$, and $z=dpa$, where $H$ be the plane spanned by the first two standard basic vectors of $\Q^3$.
(Note that $\gamma$ stabilizes $V$ because $M(h',u')$ stabilizes $H$,
and $\gamma(dpa)=\pm(dpa)$ because $a$ is an eigenvector for $M(h,u)$ with eigenvalues $\epsilon=\pm1$.)

\begin{lemma}
  Let $x$, $y$, and $z$ be nonzero vectors in $\Q^3$.  Let $V \subset \Q^3$ be a plane with $x, y \in V$ and $z \not \in V$.  Let $\gamma \in \Gamma$ such that $\gamma V=V$ and $\gamma z = \pm z$.
Then
\[
[x,\gamma x, z]_\Gamma=[y,\gamma y, z]_\Gamma.
\]
\end{lemma}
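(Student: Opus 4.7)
The plan is to apply the passing-through relation (Theorem~\ref{thm:mod-props}(\ref{it:relations})(d)) twice, once for each of the first two columns, so that the only surviving ``correction'' terms cancel in the $\Gamma$-coinvariants.

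First I would pass through $y$ in the symbol $[x,\gamma x, z]$, obtaining
\[
[x,\gamma x,z] \;=\; [y,\gamma x,z] + [x,y,z] + [x,\gamma x,y].
\]
The crucial observation is that $[x,\gamma x,y]=0$: the three vectors $x,\gamma x,y$ all lie in the $2$-dimensional plane $V$ (using $\gamma V = V$), hence are $\Q$-linearly dependent. Next I would pass through $\gamma y$ in $[y,\gamma x,z]$:
\[
[y,\gamma x,z] \;=\; [\gamma y,\gamma x,z] + [y,\gamma y,z] + [y,\gamma x,\gamma y],
\]
and again $[y,\gamma x,\gamma y]=0$ because $y,\gamma x,\gamma y\in V$.

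The final step is to simplify $[\gamma y,\gamma x,z]_\Gamma$ in the coinvariants. Writing $z = \gamma(\gamma^{-1}z) = \pm\gamma z$ and using the scale invariance of modular symbols, $\Gamma$-equivariance gives
\[
[\gamma y,\gamma x,z]_\Gamma \;=\; [\gamma y,\gamma x,\pm\gamma z]_\Gamma \;=\; [\gamma y,\gamma x,\gamma z]_\Gamma \;=\; [y,x,z]_\Gamma \;=\; -[x,y,z]_\Gamma.
\]
Substituting everything back yields
\[
[x,\gamma x,z]_\Gamma \;=\; -[x,y,z]_\Gamma + [y,\gamma y,z]_\Gamma + [x,y,z]_\Gamma \;=\; [y,\gamma y,z]_\Gamma,
\]
which is the desired equality.

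The only real obstacle is combinatorial: choosing the correct vectors to pass through so that two of the three ``diagonal'' correction terms vanish automatically (via the $V$-dependence) and the third term cancels with its counterpart via $\Gamma$-invariance and the hypothesis $\gamma z = \pm z$. The degenerate case where $y$ is a scalar multiple of $x$ is immediate from relation~(\ref{it:relations})(b) of Theorem~\ref{thm:mod-props}, and does not require the above argument.
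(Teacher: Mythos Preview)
Your proof is correct and follows essentially the same approach as the paper: pass $y$ through $[x,\gamma x,z]$, then $\gamma y$ through $[y,\gamma x,z]$, kill the two dependent terms using $V$, and cancel $[\gamma y,\gamma x,z]_\Gamma$ against $[x,y,z]_\Gamma$ via $\Gamma$-invariance and $\gamma z=\pm z$. The extra remark on the degenerate case $y\in\Q x$ is harmless but unnecessary, since the passing-through argument already covers it.
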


\begin{proof}
Passing $y$ through $[x,\gamma x, z]_\Gamma$, we have
$$
 [x,\gamma x, z]_\Gamma = 
  [y,\gamma x, z]_\Gamma+ [x,y, z]_\Gamma+ [x,\gamma x, y]_\Gamma.
$$
Since $x,\gamma x$ and $y$ all lie in the plane $V$, the last symbol on the right is 0.

Passing $\gamma y$ through $[y,\gamma x, z]_\Gamma$, we have
$$
 [y,\gamma x, z]_\Gamma = 
  [\gamma y,\gamma x, z]_\Gamma+ [y,\gamma y, z]_\Gamma+
   [y,\gamma x, \gamma y]_\Gamma.
$$
Since $y$, $\gamma x$, and $\gamma y$ all lie in the plane $V$, the last symbol on the right is $0$.
Putting this together we obtain
\[
 [x,\gamma x, z]_\Gamma =   [\gamma y,\gamma x, z]_\Gamma+ [y,\gamma y, z]_\Gamma+ [x,y, z]_\Gamma.
\]
However, since $\gamma\in\Gamma$ and $\gamma z=\pm z$, we have 
\[[\gamma y,\gamma x,  z]_\Gamma
=[\gamma  y,\gamma x,\gamma z]_\Gamma=[y,  x,z]_\Gamma=-[x,y,z]_\Gamma,\]
  and the desired result follows.
\end{proof}

In summary,  we use the following algorithm to find $H(\Gamma,E)$:
\begin{algorithm}\label{algo}
Let $\Gamma$ be a subgroup of finite index in $\SL_3(\Z)$ and $E$ a real quadratic field.  Let $P_3$ be the subgroup of $\SL_3(\Z)$ that stabilizes the plane ${}^t(*,*,0)$.
Then $H(\Gamma,E)$ is the $\Q$-span of all 
 $[f,\gamma f, da]_\Gamma$ where $f,a\in\Z^3$, $d\in\SL_3(\Z)$, and 
 $\gamma\in \Gamma$ are found by the following algorithm: 
\begin{description}
  \item[Step 1] Choose a unital $h$, choose $u\in\Z^2$, and set 
  $\epsilon=\det(h)$.
 \item[Step 2] Form $M(h,u)$ and find an eigenbasis for it of the form
  $\set{b,b',a}$, where $b'$ is the Galois conjugate of $b$.  
  \item[Step $3''$] Choose $d$ in a set of representatives 
 of the double cosets $\Gamma\backslash \SL_3(\Z)/P_3$.  
 \item[Step 4] Find the smallest positive power of $dM(h,u)d\inv$ 
  which lies in $\Gamma$.  
  Call this power $\gamma$.  
  \item[Step 5] Set $f=de$, where $e$ is the first standard basis vector of $\Q^3$.
  \item[Step 6] Repeat Steps 1--5, going through all possible choices.  
\end{description}
 \end{algorithm}
  
  Of course, in practice we cannot go through an infinite number of choices.  We explain how we deal with this in Section~\ref{comp}.
 
\section{Maximal cusps and their stabilizers}\label{cusps}
Let $n\ge2$, and fix a level $N$.  Let $\Gamma=\Gamma_0(N,n)$, and 
\[\Gamma(N)=\set{g\in\SL_n(\Z) \given g \equiv I \pmod N}.\]

\begin{lemma}\label{lemma1}
Let $u,v\in\Z^n$ be primitive column vectors such 
that $u\equiv v \pmod N$.  Then there exists $g\in\Gamma(N)$ such that $gu=v$.
\end{lemma}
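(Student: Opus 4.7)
The plan is to first reduce to the case $u = e_1$ by conjugating with an element of $\SL_n(\Z)$, and then construct $g$ by correcting an arbitrary lift block by block.

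First, I would carry out the reduction. Since $u$ is primitive, one can choose $h \in \SL_n(\Z)$ with $h e_1 = u$. Setting $v' = h\inv v$, the vector $v'$ is primitive and $v' - e_1 = h\inv(v - u) \in N\Z^n$, so $v' \equiv e_1 \pmod N$. If a $g' \in \Gamma(N)$ with $g' e_1 = v'$ can be produced, then $g = h g' h\inv$ satisfies $g u = v$ and lies in $\Gamma(N)$ by normality of $\Gamma(N)$ in $\SL_n(\Z)$. Thus it suffices to prove the lemma with $u = e_1$.

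With $u = e_1$, the plan is as follows. Since $v$ is primitive, pick $B \in \SL_n(\Z)$ with $B e_1 = v$. Because $v \equiv e_1 \pmod N$, the reduction $\bar B \in \SL_n(\Z/N)$ has first column $\bar e_1$, hence in block form with respect to the partition $(1, n-1)$ it reads
\[
\bar B = \mat{1 & * \\ 0 & C}, \qquad C \in \SL_{n-1}(\Z/N),
\]
and the same shape holds for $\bar B\inv$. I would then look for $g$ of the form $g = B X$, where $X \in \SL_n(\Z)$ satisfies $X e_1 = e_1$ and $X \equiv \bar B\inv \pmod N$; then automatically $g e_1 = v$ and $g \equiv I \pmod N$. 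Building such an $X$ reduces to lifting the $(n-1) \times (n-1)$ block $C\inv$ to some $Y \in \SL_{n-1}(\Z)$ with $Y \equiv C\inv \pmod N$ and lifting the top row entries of $\bar B\inv$ to arbitrary integer entries with the correct residues; then $X = \mat{1 & * \\ 0 & Y}$ has the required properties.

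The only nontrivial ingredient—and the main obstacle, though a mild one—is the surjectivity of the reduction map $\SL_{n-1}(\Z) \to \SL_{n-1}(\Z/N)$, which provides the lift $Y$. For $n = 2$ this is vacuous since $\SL_1$ is trivial; for $n \geq 3$ it is classical, for instance via generation of $\SL_{n-1}(\Z/N)$ by elementary matrices, each of which visibly lifts to $\SL_{n-1}(\Z)$. Granted this standard input, the rest of the construction is purely formal.
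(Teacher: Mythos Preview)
Your proof is correct and follows essentially the same approach as the paper: reduce to $u=e_1$ by conjugation (using normality of $\Gamma(N)$), then write $g=BX$ with $Be_1=v$ and build $X$ by lifting the $(n-1)\times(n-1)$ block of $\bar B^{-1}$ via the surjectivity of $\SL_{n-1}(\Z)\to\SL_{n-1}(\Z/N)$. The paper presents the special case first and the reduction second, but the construction is identical.
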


\begin{proof}
First, assume that $u=e_1=\trans{(1,0,\dots,0)}$.
Let $h\in\SL_n(\Z)$ have first column equal to $v$.  So $hu=v$. Let an overline denote reduction modulo $N$.
Then $\overline h\inv \in\SL_n(\Z/N\Z)$ in $(1,n-1)$ block diagonal form looks like
\[
\begin{bmatrix}
1&x\\
0&y
\end{bmatrix}.
\]
Since $y\in\SL_{n-1}(\Z/N\Z)$, there is a matrix $Y\in \SL_{n-1}(\Z)$ such that 
$\overline Y=y$.  Choose $X\in\Z^{n-1}$ such that $\overline X=x$, and set
\[
k=\begin{bmatrix}
1&X\\
0&Y
\end{bmatrix}.
\] 
Then $\overline k = \overline h\inv$ and $k e_1=e_1$.  Hence
$hku=v$ and $\overline{hk}=\overline I$, which implies that $hk\in\Gamma(N)$.

Now let $u$ be general, and choose $A\in\SL_n(\Z)$ such that $Au = e_1$.  Now $Au\equiv Av \pmod N$, so applying what we have already proved to $Au$ and $Av$, we obtain $g'\in\Gamma(N)$ such that $g'Au=Av$.  Take $g=A\inv g' A$.
\end{proof}

Next, we describe the $\Gamma$-orbits of maximal parabolic subgroups of $\GL_3(\Q)$.  We have to correct \cite[Theorem~6]{ash-direct-sum}.
(Although that theorem is wrong in general, it is correct for square-free $N$, so the rest of that paper, which applies only to square-free $N$, remains correct.) 

First, consider the case of parabolic subgroups that are stabilizers of lines.  Each is determined by the line it stabilizes, so it is equivalent to find 
the $\Gamma$-orbits of primitive column vectors $v\in\Z^3$ modulo $\pm1$.

\begin{lemma}\label{lemma2}
The $\Gamma$-orbits of lines in $\Q^3$ are in 1-1 correspondence with the set of positive divisors of $N$.  If $d$ is such a divisor, the corresponding line is generated by $\trans{(1,d,0)}$.
\end{lemma}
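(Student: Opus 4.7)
Proof proposal. The plan is to identify
\[ d(v) := \gcd(v_2, v_3, N) \]
as a complete $\Gamma$-invariant on primitive vectors $v = \trans{(v_1,v_2,v_3)} \in \Z^3$ taking values in the positive divisors of $N$, with the line spanned by $\trans{(1,d,0)}$ serving as the distinguished representative of the orbit of invariant $d$.

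First I verify invariance and surjectivity. Any $\gamma \in \Gamma$ reduces mod $N$ to block upper-triangular form with respect to the $(1,2)$-partition; since $\det\gamma = 1$, the two diagonal blocks have determinants that are mutually inverse units mod $N$, so the bottom-right block $C$ lies in $\GL_2(\Z/N\Z)$. Hence $\binom{(\gamma v)_2}{(\gamma v)_3} \equiv C \binom{v_2}{v_3} \pmod N$, and the two pairs generate the same ideal in $\Z/N\Z$, giving $d(\gamma v) = d(v)$. Since $d(\trans{(1,d,0)}) = d$ for each $d \mid N$, the invariant is surjective onto the divisors of $N$.

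For completeness, given primitive $v$ with $d(v) = d$, I first apply $\diag(1,M)$ with $M \in \SL_2(\Z)$ (which lies in $\Gamma$ because its bottom-left block vanishes) to send $\trans{(v_2,v_3)}$ to $\trans{(d',0)}$ with $d' = \gcd(v_2,v_3)$, reducing to $v = \trans{(v_1, d', 0)}$, where $\gcd(v_1,d') = 1$ by primitivity and $\gcd(d',N) = d$ by invariance of $d$. Write $d' = de$; then $\gcd(e, N/d) = 1$, and also $\gcd(e, v_1) = 1$.

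Now I construct $\gamma \in \Gamma$ sending $\trans{(v_1,de,0)}$ to $\trans{(1,d,0)}$. Taking the first two columns of $\gamma$ to be $\trans{(a, Np, Ne)}$ and $\trans{(b_1, c_{11}, -(N/d) v_1)}$ (the required $(2,1)$ and $(3,1)$ entries being multiples of $N$), the target equation unpacks to the B\'ezout system
\[ a v_1 + b_1 de = 1, \qquad (N/d) p\, v_1 + c_{11} e = 1, \]
each solvable in $\Z$ by the coprimality facts above. A direct computation of the $2 \times 2$ minors of these two columns gives $M_{12} = (a - (N/d)p)/e$, $M_{13} = -N/d$, and $M_{23} = -N$, so $\gcd(M_{12}, M_{13}, M_{23}) = \gcd((a - (N/d)p)/e,\, N/d)$. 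Varying the free parameters in the two B\'ezout solutions shifts $(a - (N/d)p)/e$ by multiples of $\gcd(d, N/d)$, and the base value is a unit modulo every prime $\ell \mid \gcd(d, N/d)$ --- essentially because $\ell \mid d$ forces $v_1^{-1}$ to be a unit mod $\ell$ while $\ell \mid N/d$ forces $(N/d) \equiv 0 \pmod \ell$, so the base is congruent to $v_1^{-1} e^{-1}$ mod $\ell$. Hence parameters can be chosen making the minor $\gcd$ equal to $1$, so the first two columns extend to a third to give $\gamma \in \SL_3(\Z) \cap \Gamma$, and $\diag(-1,-1,1) \in \Gamma$ absorbs any residual sign. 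The main obstacle is this last minor $\gcd$ calculation, where the prime-by-prime analysis on $\gcd(d,N/d)$ is what forces the clean $d \mid N$ answer and prevents the finer $\Gamma_0(N)$-cusp stratification from appearing.
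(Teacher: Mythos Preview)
Your argument is correct in outline and in its key computations, but it takes a genuinely different route from the paper.

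The paper, after the same initial block-diagonal reduction to $\trans{(x,d',0)}$, proceeds more softly: it uses Dirichlet's theorem on primes in arithmetic progression to arrange $\gcd(x,N)=1$, then chooses $\gamma\in\SL_3(\Z)$ with $\gamma\equiv\diag(\bar x^{-1},1,\bar x)\pmod N$ (automatically in $\Gamma$), so that $\gamma v\equiv\trans{(1,d',0)}\pmod N$; finally it invokes Lemma~\ref{lemma1} (a strong-approximation statement: primitive vectors congruent mod $N$ are $\Gamma(N)$-equivalent) to hit $\trans{(1,d',0)}$ exactly. Your approach instead builds the required $\gamma\in\Gamma$ by hand via two B\'ezout identities and then extends the first two columns to a unimodular matrix by controlling the $2\times 2$ minor gcd. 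This is more elementary---no Dirichlet, no Lemma~\ref{lemma1}---but correspondingly more computational, and the final step (adjusting the free B\'ezout parameters so that $M_{12}$ is a unit modulo every prime dividing $N/d$, not just those dividing $\gcd(d,N/d)$) implicitly needs a CRT argument over the primes $q\mid N/d$ with $q\nmid d$ that you should state explicitly. You also skip the trivial edge case $v_2=v_3=0$ (i.e.\ $v=\pm e_1$, invariant $d=N$), handled by $\left(\begin{smallmatrix}1&0&0\\N&1&0\\0&0&1\end{smallmatrix}\right)\in\Gamma$. The closing remark about $\diag(-1,-1,1)$ is unnecessary, since you already produce $\gamma v=\trans{(1,d,0)}$ exactly.
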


\begin{proof}
Given a line in $\Q^3$, let it be generated  by the primitive vector 
$v=\trans{(x,y,z)}$.  A general element of $\Gamma$ looks like
\[
\gamma=
\begin{bmatrix}
a&b&c\\
D&e&f\\
g&h&i
\end{bmatrix}\in \SL_3(\Z)
\]
with $D\equiv g\equiv 0 \pmod N$.  If $(y,z)=(0,0)$, we may
multiply $v$ by some $\gamma$ with $D\ne0$, so without loss of 
generality $(y,z)\ne(0,0)$.  Then
multiplying $v$ by a suitable $\gamma$ with $b=c=D=g=0$, we may replace
$(y,z)$ with $(d,0)$ where $d=\gcd(y,z)$.  So now $v=\trans{(x,d,0)}$ and $x,d$ are relatively prime.

Let $s$ be an integer such that $x+sd$ is prime, which exists by Dirichlet's theorem. (If $x=0$ then $d=\pm1$ so we can take $s$ to be any prime.)  Multiplying $v$ by 
\[
\begin{bmatrix}
1&s&0\\
0&1&0\\
0&0&1
\end{bmatrix}
\]
allows us to assume that $x$ is prime to $N$.

Let $\gamma\in\SL_3(\Z)$ be a matrix that reduces modulo $N$ to
\[
\begin{bmatrix}
\overline x\inv&0&0\\
0&1&0\\
0&0&\overline x
\end{bmatrix}.
\]
Then $\gamma v \equiv \trans{(1,d,0)} \pmod N$.  By Lemma~\ref{lemma1} there exists $g\in\Gamma(N)$ such that $g\gamma v =  \trans{(1,d,0)}$, and $g\gamma\in\Gamma$.

If $\gamma\in\Gamma$ and $\gamma \ \trans{(1,d,0)}=\trans{(1,d',0)}$ for 
some $\gamma\in\Gamma$, then the ideal generated by $d$ in $\Z/N\Z$ equals
 the ideal generated by $d'$ in $\Z/N\Z$.  If $d$ and $d'$ are both positive divisors of $N$,
it follows that
$d=d'$.
\end{proof}

Next, consider the case of parabolic subgroups that are stabilizers of planes.  Let $\tau$ denote the automorphism of $\GL_3$ given by $A\mapsto \trans{A}\inv$.
Then $\tau$ takes these parabolic subgroups to those which are stabilizers of lines.  Let $\Gamma'=\tau(\Gamma)$.  Then the $\Gamma$-orbits of stabilizers of planes are in 1-1 correspondence with the $\Gamma'$-orbits of stabilizers of lines, which are given in the next lemma.

\begin{lemma}\label{lemma3}
The $\Gamma'$-orbits of lines in $\Q^3$ are in 1-1 correspondence with the set of positive divisors of $N$.  If $d$ is such a divisor, the corresponding line is generated by $\trans{(d,1,0)}$.
\end{lemma}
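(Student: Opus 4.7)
The plan is to mirror the proof of Lemma~\ref{lemma2}, with the roles of upper and lower block-triangular reductions exchanged. I first pin down $\Gamma'=\tau(\Gamma)$: since $\tau(\gamma)=\trans{\gamma}\inv$ and $\Gamma=\Gamma_0(N,3)$ is block upper triangular modulo $N$ with respect to the partition $(1,2)$, a direct computation with block inverses shows that $\Gamma'$ consists of those $g\in\SL_3(\Z)$ whose first row reduces to $(*,0,0)\pmod N$. In particular $\Gamma(N)\subset\Gamma'$, so Lemma~\ref{lemma1} is available inside $\Gamma'$.

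Let $v=\trans{(x,y,z)}\in\Z^3$ be primitive and set $d=\gcd(x,N)$. I will construct $\gamma'\in\Gamma'$ with $\gamma' v\equiv\trans{(d,1,0)}\pmod N$; Lemma~\ref{lemma1} applied to $\gamma' v$ and $\trans{(d,1,0)}$ then promotes this congruence to an equality. The first stage uses operations visibly in $\Gamma'$, namely the copy of $\SL_2(\Z)$ acting on rows $2$ and $3$ and the elementary matrices that add integer multiples of row $1$ to row $2$ or row $3$, to bring $v$ into the form $\trans{(x,y',0)}$ with $\gcd(y',N)=1$. Concretely, if $(y,z)\ne(0,0)$ I use $\SL_2(\Z)$ on the last two rows to reach $\trans{(x,d',0)}$ with $d'=\gcd(y,z)$ (so $\gcd(x,d')=1$ by primitivity), and then replace $d'$ by $d'+sx$ for a suitable $s$. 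A Chinese Remainder argument --- for each prime $p\mid N$, at most one of $x,d'$ is divisible by $p$, so $s\pmod p$ can be chosen to avoid $d'+sx\equiv 0\pmod p$ --- produces such an $s$ with $\gcd(d'+sx,N)=1$. The degenerate case $y=z=0$ is handled by first applying the operation $\trans{(x,0,0)}\mapsto\trans{(x,x,0)}$.

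In the second stage, writing $x=du_1$ with $\gcd(u_1,N/d)=1$, I choose a unit $B\in(\Z/N\Z)^\times$ with $Bu_1\equiv 1\pmod{N/d}$ (possible by the surjectivity of $(\Z/N\Z)^\times\twoheadrightarrow(\Z/(N/d)\Z)^\times$), set $A\equiv(y')\inv\pmod N$ and $C\equiv (BA)\inv\pmod N$, and lift $\diag(B,A,C)\in\SL_3(\Z/N\Z)$ through the standard surjection $\SL_3(\Z)\twoheadrightarrow\SL_3(\Z/N\Z)$. Any such lift automatically lies in $\Gamma'$ (its first row reduces to $(B,0,0)$) and carries $v$ to a vector congruent to $\trans{(d,1,0)}\pmod N$, completing the reduction.

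Distinctness of the orbits indexed by positive divisors of $N$ follows because the top-left entry of any $\gamma'\in\Gamma'$ is a unit modulo $N$, so the ideal in $\Z/N\Z$ generated by the first coordinate of $v$ is an orbit invariant; for the representative $\trans{(d,1,0)}$ this ideal is $(d)$, which distinguishes the positive divisors of $N$. The main bookkeeping obstacle I anticipate is the compatibility of the first-stage CRT argument with the second-stage lift $\diag(B,A,C)$, together with the strong-approximation step $\SL_3(\Z)\twoheadrightarrow\SL_3(\Z/N\Z)$; once these are in hand the argument transcribes directly from Lemma~\ref{lemma2}.
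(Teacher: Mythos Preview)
Your proof is correct and follows essentially the same route as the paper: reduce to $\trans{(x,y,0)}$ with $y$ a unit mod $N$, hit it with a matrix that is diagonal mod $N$ to land on $\trans{(d,1,0)}\pmod N$, invoke Lemma~\ref{lemma1}, and separate orbits via the ideal generated by the first coordinate. Your two technical choices are in fact mild improvements on the paper's version: you use CRT rather than Dirichlet's theorem to make $y$ coprime to $N$, and by lifting $u_1^{-1}$ through $(\Z/N\Z)^\times\twoheadrightarrow(\Z/(N/d)\Z)^\times$ you sidestep the paper's unjustified claim that $x$ can always be written as $wd$ with $w$ prime to $N$ (which fails, e.g., for $x=p^3$, $N=p^2$ without a further move).
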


\begin{proof}
Given a line in $\Q^3$, let it be generated by the primitive vector 
$v=\trans{(x,y,z)}$.  A general element of $\Gamma'$ looks like
\[
\gamma=
\begin{bmatrix}
a&b&c\\
D&e&f\\
g&h&i
\end{bmatrix}\in M_3(\Z)
\]
with $b\equiv c \equiv 0 \pmod N$ and of determinant $1$.  If $(y,z)=(0,0)$, we may
multiply $v$ by some $\gamma$ with $D\ne0$, so without loss of 
generality $(y,z)\ne(0,0)$.  Then
multiplying $v$ by a suitable $\gamma$ with $b=c=D=g=0$, we may assume $z=0$.    So now $v=\trans{(x,y,0)}$ and $x,y$ generate the unit ideal in $\Z$.

Let $s\in\Z$ such that $y+sx$ is prime. Multiplying $v$ by 
$$
\begin{bmatrix}
1&0&0\\
s&1&0\\
0&0&1
\end{bmatrix}
$$
allows us to assume that $y$ is prime to $N$.  

If $x=0$, multiplying $v$ by 
$$
\begin{bmatrix}
1&N&0\\
0&1&0\\
0&0&1
\end{bmatrix}
$$
allows us to assume instead that $x\ne0$.  

Write $x=wd$ where $d$ is a positive divisor of $N$ and $w$ is prime to $N$.
Now let $\gamma\in\SL_3(\Z)$ be a matrix that reduces modulo $N$ to
\[
\begin{bmatrix}
\overline w\inv&0&0\\
0&\overline y\inv&0\\
0&0&\overline{wy}
\end{bmatrix}.
\]
Then $\gamma v \equiv \trans{(d,1,0)} \pmod N$.  By Lemma~\ref{lemma1} there exists $g\in\Gamma(N)$ such that $g\gamma v =  \trans{(d,1,0)}$, and $g\gamma\in\Gamma'$.

If $\gamma\in\Gamma'$ and $\gamma \ \trans{(d,1,0)}=\trans{(d',1,0)}$,  then the ideal generated by $d$ in $\Z/N\Z$ is the same as 
the ideal generated by $d'$.
 If $d$ and $d'$ are both positive divisors of $N$,
it follows that
$d=d'$.
\end{proof}

Next, we have to determine $\Gamma\cap P$ where $P$ is a maximal parabolic subgroup.  Consider the exact sequence
\[
1\to U\to P\to P/U\to 1,
\] 
where $U$ is the unipotent radical of $P$ and $P/U$ is isomorphic to any Levi-component $L$ of $P$.  Let $\Gamma_P=\Gamma\cap P$, 
$\Gamma_U=\Gamma\cap U$ and $\Gamma_L=\Gamma_P/\Gamma_U$.  
(Note that it is not necessarily 
true that $\Gamma_L$ is isomorphic to $\Gamma\cap L$.) To compute the homology or cohomology of $\Gamma_P$, we need 
to identify $\Gamma_U$ and  $\Gamma_L$.

First, we deal with stabilizers of lines.  Let $P_0$ denote the stabilizer of the line through $e_1$.  Let $U_0$ be its unipotent radical, and choose a Levi-component $L_0$ as follows:
\[
P_0=
\begin{bmatrix}
*&*&*\\
0&*&*\\
0&*&*
\end{bmatrix},
\quad U_0=
\begin{bmatrix}
1&*&*\\
0&1&0\\
0&0&1
\end{bmatrix},
\quad L_0=
\begin{bmatrix}
*&0&0\\
0&*&*\\
0&*&*
\end{bmatrix}.
\]
Let $\pi\colon P_0\to L_0$ be the obvious projection map, and use it to identify $P_0/U_0$ with $L_0$. We have the exact sequence
\[
1\to U_0\to P_0 \xrightarrow{\pi} P_0/U_0\to 1.
\]

\begin{definition}\label{Gamma*}
Let $M$ and $\Delta$ be positive integers such that $\Delta \mid M$, and set 
\[\Gamma_1(M,\Delta)^*=\set*{g=\begin{bmatrix}a&b\\c&D\end{bmatrix}\in\GL_2(\Z) \given  \text{$c\equiv0 \pmod M$, and $a \equiv \det(g) \pmod \Delta$}}.\]
\end{definition}

\begin{lemma}\label{lines}
Let $d$ be a positive divisor of $N$, and let $\Delta=\gcd(d,N/d)$.
Let $P_d$ be the stabilizer of the line 
through $\trans{(1,d,0)}$.  
Set
\[
g_d=
\begin{bmatrix}
1&0&0\\
d&1&0\\
0&0&1
\end{bmatrix},
\]
and define $\pi_d$ by
\[
\pi_d(\gamma)=\pi(g_d\inv\gamma g_d).
\]
Then we have an exact sequence
\[
1\to \Gamma_{U_d}\to \Gamma_{P_d} \xrightarrow{\pi_d} \Gamma_{L_d}\to 1,
\]
where (i) $\Gamma_{U_d}$ is isomorphic to $\Z^2$ and (ii) $\Gamma_{L_d}$ is isomorphic to $\Gamma_1(N/d,\Delta)^*$.
\end{lemma}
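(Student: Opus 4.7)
The plan is to exploit that $g_d\in\SL_3(\Z)$ satisfies $g_d e_1=\trans{(1,d,0)}$, so conjugation by $g_d$ is an automorphism of $\SL_3(\Z)$ that carries $P_0$ onto $P_d$ and $U_0$ onto $U_d$. Consequently $\pi_d(\gamma)=\pi(g_d\inv\gamma g_d)$ is a well-defined homomorphism on $P_d$ landing in $L_0$, and the exactness of $1\to\Gamma_{U_d}\to\Gamma_{P_d}\xrightarrow{\pi_d}\pi_d(\Gamma_{P_d})\to 1$ is automatic once we identify the image with $\Gamma_{L_d}\cong\Gamma_1(N/d,\Delta)^*$ and the kernel with $\Gamma_{U_d}\cong\Z^2$.

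First I would parametrize: write a typical $\alpha\in P_0\cap\SL_3(\Z)$ as
\[
\alpha=\begin{bmatrix} p & q & r\\ 0 & u & v\\ 0 & w & z\end{bmatrix},\qquad p(uz-vw)=1,
\]
and compute $\gamma=g_d\alpha g_d\inv$ explicitly. The entries in the first column of $\gamma$ come out to be $p-dq$, $d(p-dq-u)$, $-dw$. The condition $\gamma\in\Gamma_0(N,3)$ then becomes the two congruences $d(p-dq-u)\equiv 0\pmod N$ and $dw\equiv 0\pmod N$; writing $N=dM$ with $M=N/d$, these simplify to $p-dq-u\equiv 0\pmod M$ and $w\equiv 0\pmod M$.

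For the image, observe that $\pi_d(\gamma)$ is determined by the $2\times 2$ block $M_2=\begin{bmatrix} u & v\\ w & z\end{bmatrix}\in\GL_2(\Z)$, since $p=\det(M_2)$ is forced. The congruence $w\equiv 0\pmod{N/d}$ is intrinsic to $M_2$. The remaining condition $p-dq-u\equiv 0\pmod M$ is solvable in $q\in\Z$ iff $p-u$ lies in the subgroup of $\Z/M\Z$ generated by $d$, which is $\gcd(d,M)\Z/M\Z=\Delta\Z/M\Z$; that is, iff $u\equiv\det(M_2)\pmod\Delta$. This is exactly the defining condition for $\Gamma_1(N/d,\Delta)^*$, and conversely any $M_2$ in that group lifts (choose such a $q$, take $r\in\Z$ arbitrary) to a $\gamma\in\Gamma_{P_d}$, giving surjectivity of $\pi_d$ onto $\Gamma_1(N/d,\Delta)^*$.

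For the kernel, $\ker\pi_d=\Gamma\cap g_dU_0g_d\inv=\Gamma_{U_d}$ by construction. To see this is $\Z^2$, parametrize $U_0\cap\SL_3(\Z)$ by $(x,y)\in\Z^2$ in the obvious way and compute $g_d u g_d\inv$; the $(3,1)$ entry is $0$ and the $(2,1)$ entry is $-d^2x$, so the $\Gamma_0(N,3)$-condition reduces to $d^2x\equiv 0\pmod N$, i.e.\ $x\in (N/(d\Delta))\Z$, while $y$ is unconstrained. Thus $\Gamma_{U_d}\cong(N/(d\Delta))\Z\oplus\Z\cong\Z^2$. The main obstacle is simply bookkeeping: ensuring the surjectivity step produces an integral lift with the correct residues, which the factorization $M=\Delta M'$, $d=\Delta d'$ with $\gcd(d',M')=1$ handles cleanly via Bezout.
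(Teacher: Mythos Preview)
Your argument is correct and follows essentially the same approach as the paper: both proofs use the conjugation $\gamma\leftrightarrow g_d^{-1}\gamma g_d$ to transport the problem to $P_0$ and then read off the congruence conditions on the entries. The only notable difference is in the surjectivity step: the paper writes down an explicit lift of a given $g'\in\Gamma_1(N/d,\Delta)^*$ by solving for the entries one at a time, whereas you observe more directly that the single obstruction is the solvability of $dq\equiv p-u\pmod{M}$, which holds precisely when $\Delta\mid(p-u)$. Your route is a bit cleaner, but the content is the same.
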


\begin{proof}
Statement (i) is obvious.  
Since $g_de_1=\trans{(1,d,0)}$, $P_0=g_d\inv P_d g_d$ and the definition
 of $\pi_d$ makes sense. Let $\gamma\in M_3(\Z)$, and write
\[
 \gamma=
\begin{bmatrix}
a&b&c\\
D&e&f\\
g&h&i
\end{bmatrix}.
 \]
 Then
\[
g_d\inv \gamma g_d=
\begin{bmatrix}
a+db&b&c\\
-d(a+db)+(D+de)&-db+e&-dc+f\\
g+dh&h&i
\end{bmatrix}.
 \]
 So $\gamma\in \Gamma_{P_d}=P_d\cap\Gamma$ if and only if
 \begin{itemize}
 \item $-d(a+db)+(D+de)=0$;
 \item $g+dh=0$;
 \item $D\equiv g\equiv 0 \pmod N$;
 \item $\det(\gamma)=1$.
 \end{itemize}
 Note that in this case, 
\[
\pi_d(\gamma)=
\begin{bmatrix}
-db+e&-dc+f\\
h&i
\end{bmatrix}
\]
and $ \det (\pi_d(\gamma)) = a+db = \pm1$.
Therefore, if $\gamma\in \Gamma_{P_d}$, then
$
-(a+db)+(D/d)+e=0,
$
and $D/d$ is a multiple of $N/d$,
whence $a\equiv e \equiv   \det (\pi_d(\gamma))   \pmod \Delta$.
Also $h\equiv0\pmod{N/d}$.  We conclude that 
$\pi_d(\gamma) \in \Gamma_1(N/d,\Delta)^*$.

Conversely, given $g'=\begin{bmatrix}a'&b'\\c'&d'\end{bmatrix}\in
\Gamma_1(N/d,\Delta)^*$, 
we will construct $\gamma\in\Gamma_{P_d}$ 
such that $\pi_d(\gamma)=g'$.

First, set $g=-dc'$,$h=c'$,$i=d'$, $c=0$, and $f=b'$.  Then $b'$, $c'$, and $d'$ are correct, the lower left hand corner of $g_d\inv\gamma g_d$ is $0$, and $g \equiv 0 \pmod{N}$.

Next, leaving $a$ and $b$ as variables, set $e=a'+db$ and $D=-de+da+d^2b$.  This ensures that $a'$ is correct and that
the $(2,1)$ coordinate of $g_d\inv\gamma g_d$ is $0$.  

It remains to choose $a$ and $b$ so that $D \equiv 0 \pmod{N}$ and 
$\det (\gamma) = 1$.  First, we need $N$ to divide $D$. Since \[D=-d(a'+db)+da+d^2b)=d(a-a'),\]
 we set $a=a'+X(N/d)$, where $X$ is an unknown integer.

Lastly, we need $\det(\gamma)=1$.  Now $\det(\gamma)=\det( g')(a+db)$.  
Since $\det (g')=\pm1$ we just need to ensure that $a+db=\det(g')$.  Now
\[
a+db=a'+X(N/d) +bd,
\]
and we are given that $a'\equiv \det(g')\pmod \Delta$.  
Write $a'=\det(g') +m\Delta$.  Then we want 
\[
\det (g') +m\Delta+X(N/d) +bd =\det (g'),
\]
so we must choose $X$ and $b$ integers so that 
$m\Delta+X(N/d) +bd=0$.  This can be done because $\Delta=\gcd(d,N/d)$.
\end{proof}

Now we do the same thing for stabilizers of planes. 

\begin{lemma} Let $d$ be a positive divisor of $N$, and let $\Delta=\gcd(d,N/d)$.
Let $P$ be the stabilizer of a plane such that $\trans{P}$ is the stabilizer of the line through $\trans{(d,1,0)}$.
Then we have an exact sequence
\[
1\to \Gamma_{U}\to \Gamma_{P} \to \Gamma_{L}\to 1
\]
where (i) $\Gamma_{U}$ is isomorphic to $\Z^2$ and (ii) $\Gamma_{L}$ is isomorphic to $\trans{\Gamma_1(d,\Delta)}^*$.
\end{lemma}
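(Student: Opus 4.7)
The plan is to deduce this lemma from Lemma~\ref{lines} via the outer automorphism $\tau$ of $\GL_3(\Q)$ defined by $A \mapsto \trans{A}\inv$. Normal-vector duality (if $\ell g = c\ell$ for a row $\ell$, then $\trans{g}\inv \trans{\ell} = c\inv \trans{\ell}$) shows that $\tau$ carries the plane stabilizer $P$ to the line stabilizer $\tau(P) = \trans{P}$, which by hypothesis is the stabilizer of the line through $\trans{(d,1,0)}$. Since $\tau$ is a group automorphism, it restricts to an isomorphism
\[
\Gamma_P = \Gamma \cap P \xrightarrow{\sim} \Gamma' \cap \tau(P),
\]
where $\Gamma' = \tau(\Gamma)$ consists (by the proof of Lemma~\ref{lemma3}) of matrices whose $(1,2)$- and $(1,3)$-entries vanish modulo $N$.

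I would then replay the proof of Lemma~\ref{lines} with $\Gamma$ replaced by $\Gamma'$ and the line through $\trans{(1,d,0)}$ replaced by the line through $\trans{(d,1,0)}$. Choose a matrix $h_d \in \SL_3(\Z)$ with $h_d e_1 = \trans{(d,1,0)}$, for instance
\[
h_d = \mat{d & -1 & 0 \\ 1 & 0 & 0 \\ 0 & 0 & 1},
\]
so that $\tau(P) = h_d P_0 h_d\inv$, and define $\pi_d'(\gamma) = \pi(h_d\inv \gamma h_d)$. Part (i) is immediate: the unipotent radical of $\tau(P)$ is abelian of rank $2$, and the congruence conditions cut out a finite-index, hence still rank-$2$, subgroup.

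For (ii), write a general $p \in P_0$ with scalar $r$ in the $(1,1)$-slot, auxiliary entries $s_1, s_2$ filling out the top row, and block $L = \mat{A & B \\ C & D'} \in \GL_2(\Z)$ below, with $r \det L = \det p = 1$, so $r = \det L = \pm 1$. Expanding $\gamma = h_d p h_d\inv$ gives $\gamma_{12} = d(r + d s_1 - A)$ and $\gamma_{13} = d s_2 - B$, so the defining conditions $\gamma_{12} \equiv \gamma_{13} \equiv 0 \pmod N$ for $\Gamma'$ become $d(r + d s_1 - A) \equiv 0$ and $d s_2 - B \equiv 0 \pmod N$. Existence of integers $s_1, s_2$ solving these is equivalent to the pair of congruences $A \equiv r \pmod \Delta$ (using $\Delta = \gcd(d, N/d)$) and $B \equiv 0 \pmod d$, which are exactly the defining conditions for $L \in \trans{\Gamma_1(d,\Delta)}^*$: transposition moves the off-diagonal divisibility condition from the $(2,1)$-entry to the $(1,2)$-entry and preserves the diagonal one.

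The main obstacle will be the bookkeeping distinguishing the roles of $(1,d)$ in Lemma~\ref{lemma2} from $(d,1)$ in Lemma~\ref{lemma3}, which is why $d$ rather than $N/d$ appears in the Levi condition; this falls out correctly from the explicit conjugation by the specific $h_d$ chosen above. Once that is in place, both the well-definedness of $\pi_d'$ on $\Gamma_P$ and its surjectivity onto $\trans{\Gamma_1(d,\Delta)}^*$ follow from the solvability of the congruences above, exactly as in the proof of Lemma~\ref{lines}.
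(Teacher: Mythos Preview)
Your argument is correct and follows essentially the same route as the paper: reduce via $\tau$ to a line-stabilizer computation inside $\Gamma'$, conjugate by an explicit matrix sending a standard basis vector to $\trans{(d,1,0)}$, and read off the congruence conditions on the Levi block. The only differences are cosmetic: the paper conjugates to the stabilizer of $e_2$ using the unipotent $h_d=\begin{bmatrix}1&d&0\\0&1&0\\0&0&1\end{bmatrix}$ and then separately checks containment and constructs explicit preimages, whereas you conjugate to the stabilizer of $e_1$ and phrase both directions at once as ``there exist $s_1,s_2$ solving the congruences,'' which is slightly more economical but the same computation.
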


\begin{proof}
Statement (i) is obvious.
For the rest, as above it suffices to look at 
the stabilizer of a line in $\Gamma'=\trans{\Gamma}$.

Let $d$ be a positive divisor of $N$, and $\Delta=\gcd(d,N/d)$.
Let $P'_d$ be the stabilizer of the line 
through $\trans{(d,1,0)}$.  
Let $P'_0=L'_0U'_0$ be the stabilizer of the line 
through $\trans{(0,1,0)}$ where 
\[
P'_0=
\begin{bmatrix}
*&0&*\\
*&*&*\\
*&0&*
\end{bmatrix},
\ U'_0=
\begin{bmatrix}
1&0&0\\
*&1&*\\
0&0&1
\end{bmatrix},
\ L'_0=
\begin{bmatrix}
*&0&*\\
0&*&0\\
*&0&*
\end{bmatrix}.
\]
Let $\pi'\colon P'_0\to L'_0$ be the obvious projection map and use it to identify $P'_0/U'_0$ with $L'_0$. We have the exact sequence: 
\[
1\to  U'_0 \to  P'_0 \xrightarrow{\pi'}  P'_0/U'_0 \to 1.  
\]
Set
\[
h_d=
\begin{bmatrix}
1&d&0\\
0&1&0\\
0&0&1
\end{bmatrix},
\]
and define $\pi'_d$ by
\[
\pi'_d(\gamma)=\pi(h_d\inv\gamma h_d).
\]
Then we have an exact sequence
\[
1\to \Gamma_{U'_d}\to \Gamma_{P'_d} \xrightarrow{\pi'_d}  \Gamma_{L'_d}\to 1.
\]

 Let $\gamma\in M_3(\Z)$ and write
\[
 \gamma=
\begin{bmatrix}
a&b&c\\
D&e&f\\
g&h&i
\end{bmatrix}.
 \]
 Then
\[
h_d\inv \gamma h_d=
\begin{bmatrix}
a-dD&da+b-d(dD+e)&c-df\\
D&dD+e&f\\
g&dg+h&i
\end{bmatrix}.
\]
 So $\gamma\in \Gamma_{P'_d}=P'_d\cap\Gamma'$ if and only if
 \begin{itemize}
 \item $da+b-d(dD+e)=0$;
 \item $dg+h=0$;
 \item $b\equiv c\equiv 0 \pmod N$;
 \item $\det(\gamma)=1$.
 \end{itemize}
In that case, 
\[
\pi'_d(\gamma)=
\begin{bmatrix}
a-dD&c-df\\
g&i
\end{bmatrix},
\]
and so
\[\det( \pi'_d(\gamma)) = dD+e = \pm1.\]
Therefore, if $\gamma'\in \Gamma'_{P_d}$, then
$
a+b/d-(dD+e)=0
$
and $b/d$ is a multiple of $N/d$,
whence $a\equiv e \equiv   \det(\pi'_d(\gamma))   \pmod \Delta$.
Also $c-df\equiv0\pmod d$.  We conclude that 
$\pi_d(\gamma) \in \trans{\Gamma_1(d,\Delta)}^*$.

Conversely, given $g'=\begin{bmatrix}a'&b'\\c'&d'\end{bmatrix}\in
\trans{\Gamma_1(d,\Delta)}^*$, 
we construct $\gamma\in\Gamma'_{P'_d}$ 
such that $\pi'_d(\gamma)=g'$.

First, set $g=c'$, $h=-dc'$, $i=d'$, $c=0$, and $f=-b'/d$.  Then $b'$, $c'$, and $d'$ are correct, the
$(3,2)$-entry of $h_d\inv\gamma h_d$ is $0$, and $c \equiv 0 \pmod{N}$.

Next, leaving $e$ and $D$ as variables, set $a=a'+dD$ and $b=d^2D+de-da$.  This ensures that $a'$ is correct and that
the $(1,2)$-entry of $h_d\inv\gamma h_d$ is $0$.  

It remains to choose $e$ and $D$ so that $b \equiv 0 \pmod{N}$ and  
$\det(\gamma) = 1$.  First, we need $N$ to divide $b$. Since \[b=d^2D+de-d(a'+dD)=d(e-a'),\]
 we set $e=a'+X(N/d)$, where $X$ is an unknown integer.

Lastly, we need $\det(\gamma)=1$.  Now $\det(\gamma)=\det(g')(dD+e)$.  
Since $\det(g')=\pm1$ we just need to ensure that $dD+e=\det(g')$.  Now
\[
dD+e=dD+a'+X(N/d),
\]
and we are given that $a'\equiv \det(g')\pmod \Delta$.  
Write $a'=\det(g') +m\Delta$.  Then we want 
\[
\det(g') +m\Delta+X(N/d) +dD =\det(g').
\]
So we must choose $X$ and $D$ integers so that 
$m\Delta+X(N/d) +dD=0$.  This can be done because $\Delta=\gcd(d,N/d)$.
\end{proof}

We do not attempt to describe the $\Gamma$-orbits of minimal parabolic subgroups of $\SL_3(\Z)$, but we do need to count them.

\begin{definition}
For any subgroup $G$ of finite index in $\GL_2(\Z)$, let $c(G)$ denote the number of cusps of $H/G$, where $H$ is the upper half plane.
\end{definition}

Note that this is the same as the number of $G$-orbits of lines in $\Q^2$.  We let matrices of negative determinant act on $H$ using the rule that 
$\begin{bmatrix} 1&0\\0&-1 \end{bmatrix} \cdot z = -\overline z$.

\begin{lemma}\label{minimal}
The number of $\Gamma_0(N)$-orbits of minimal parabolic subgroups 
of $\SL_3(\Z)$ is given by the formula
\[
 \sum_{d \mid N, d>0} c(\Gamma_1(d,\Delta)^*), \quad \text{where $\Delta=\gcd(d,N/d)$.}
\]
\end{lemma}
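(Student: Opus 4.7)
The plan is to identify minimal parabolic subgroups of $\SL_3(\Q)$ with complete flags $\ell \subset V \subset \Q^3$ (where $\dim \ell =1$ and $\dim V = 2$), so that counting $\Gamma$-orbits of minimal parabolics amounts to counting $\Gamma$-orbits of such flags. I would then study these via the forgetful map to lines, giving the orbit count as a fibered sum
\[
\#\bigl(\Gamma\backslash \{\text{flags}\}\bigr) = \sum_{[\ell] \in \Gamma\backslash\{\text{lines}\}} \#\bigl(\Gamma_\ell \backslash \{\text{planes containing }\ell\}\bigr),
\]
where $\Gamma_\ell$ is the stabilizer of a chosen representative.

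By Lemma~\ref{lemma2}, the $\Gamma$-orbits of lines are indexed by positive divisors $d$ of $N$, with $\ell_d$ generated by $\trans{(1,d,0)}$, and the stabilizer is exactly the subgroup $\Gamma_{P_d}$ of Lemma~\ref{lines}. I would then identify the set of planes containing $\ell_d$ with the set of lines in the $2$-dimensional quotient $\Q^3/\ell_d$, and observe that the unipotent radical $\Gamma_{U_d}$ acts trivially on this quotient (immediate from the shape of $U_d$ after conjugation by $g_d$). Hence the action of $\Gamma_{P_d}$ on planes containing $\ell_d$ factors through the Levi quotient $\Gamma_{L_d}$.

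Next I would invoke Lemma~\ref{lines} to identify $\Gamma_{L_d}$ with $\Gamma_1(N/d,\Delta)^*$, where $\Delta = \gcd(d,N/d)$. A direct check against the definitions in Lemma~\ref{lines} shows that, under this isomorphism and the natural identification of $\Q^3/\ell_d$ with $\Q^2$ coming from the lower $2\times 2$ block of the Levi $L_0$ (conjugated by $g_d$), the action of $\Gamma_{L_d}$ on lines in $\Q^3/\ell_d$ is carried to the standard linear action of $\Gamma_1(N/d,\Delta)^* \subset \GL_2(\Z)$ on lines in $\Q^2$. Since $c(G)$ is exactly the number of $G$-orbits of lines in $\Q^2$, the fiber over $[\ell_d]$ has cardinality $c(\Gamma_1(N/d,\Delta)^*)$.

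Summing over $d \mid N$ gives $\sum_{d\mid N} c(\Gamma_1(N/d,\Delta)^*)$, and since $d \mapsto N/d$ is a bijection on positive divisors of $N$ that preserves $\Delta = \gcd(d,N/d)$, reindexing yields the stated formula. The main obstacle is the last verification of the previous paragraph: tracking the conjugation by $g_d$ through Lemma~\ref{lines} carefully enough to be sure that the isomorphism $\Gamma_{L_d} \cong \Gamma_1(N/d,\Delta)^*$ intertwines the geometric action on planes-containing-$\ell_d$ with the standard action on $\PP^1(\Q)$. Everything else is bookkeeping on top of Lemmas~\ref{lemma2} and~\ref{lines}.
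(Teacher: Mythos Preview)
Your argument is correct and runs along the same fibration idea as the paper, but you fiber the flag variety over \emph{lines} while the paper fibers over \emph{planes}. In the paper's proof, one takes the $\Gamma$-orbits of plane-stabilizers (indexed by $d\mid N$), and for each plane counts $\Gamma_P$-orbits of lines inside it; the Levi quotient there is $\trans{\Gamma_1(d,\Delta)}^*$ by the plane-stabilizer lemma, and the formula $\sum_{d\mid N} c(\Gamma_1(d,\Delta)^*)$ drops out directly (implicitly using that a group and its transpose have the same number of cusps). Your route instead invokes Lemma~\ref{lines} for the line-stabilizer, reads off $\Gamma_{L_d}\cong\Gamma_1(N/d,\Delta)^*$, and then reindexes $d\leftrightarrow N/d$ at the end. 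The two are exact duals of one another; your version has the mild advantage of never needing to pass through the transpose, at the cost of the reindexing step. The verification you flag as the ``main obstacle'' is indeed routine: after conjugating by $g_d$, planes through $\ell_d$ become planes through $e_1$, i.e.\ lines in $\Q^3/\langle e_1\rangle\cong\Q^2$, and the $P_0$-action on that quotient is visibly the lower $2\times2$ block, which is exactly the matrix $\pi_d(\gamma)$ computed in the proof of Lemma~\ref{lines}.
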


\begin{proof}
Let $\Gamma=\Gamma_0(N)$ as before.
A minimal parabolic subgroup is the stabilizer of a flag, line $\subset$ plane.
We have computed the $\Gamma$-orbits of stabilizers of planes.  They are in 1-1 correspondence with positive divisors $d$ of $N$.  For each such plane, let $P=LU$ be its stabilizer in $\SL_3(\Q)$, so that $\Gamma_P$ is its stabilizer in $\Gamma$.  Then the number of 
$\Gamma_P$-orbits of lines in that plane will equal $c(G)$ where $G$ is quotient of $\Gamma_P$ by $\Gamma_U$.
The lemma now follows from the previous lemmas.
\end{proof}

Let $T$ be the Tits building of $\SL_3(\Q)$.  This is a graph whose vertices are the maximal parabolic subgroups $P$, the edges are the minimal parabolic subgroups $Q$, and $P$ is a vertex of $Q$ if $P\supset Q$.

For any positive integer $M$, let $\tau(M)$ denote the number of positive factors of $M$.

\begin{corollary} The dimension of
$H_1(T/\Gamma_0(N),\Q)$ is given by the formula
\[
b =  \sum_{d \mid N,d>0} c(\Gamma_1(d,\Delta)^*) -  2\tau(N) + 1.
\]
\end{corollary}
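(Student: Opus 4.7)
The plan is to compute $\dim H_1(T/\Gamma_0(N),\Q)$ via the Euler characteristic of the finite graph $T/\Gamma$, since $T$ is one-dimensional and therefore so is its quotient. Concretely, I will verify that $T/\Gamma_0(N)$ is connected (so that $\dim H_0(T/\Gamma_0(N),\Q)=1$), then use
\[
\dim H_1(T/\Gamma,\Q) - \dim H_0(T/\Gamma,\Q) = -\chi(T/\Gamma) = (\#\text{edges}) - (\#\text{vertices}),
\]
and finally plug in the counts coming from the preceding lemmas.

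First I would check connectedness. The Tits building $T$ of $\SL_3(\Q)$ is itself connected: any line in $\Q^3$ is a vertex of any edge corresponding to a flag containing it, and any two lines lie in a common plane, so any two vertices of $T$ are joined by a path of length at most $2$. The quotient of a connected topological space by a group action is connected, so $T/\Gamma_0(N)$ is connected and therefore has $H_0(T/\Gamma_0(N),\Q)=\Q$.

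Next I would count the vertices and edges of $T/\Gamma_0(N)$. Vertices of $T$ are maximal parabolic subgroups of $\SL_3(\Q)$, i.e.\ stabilizers of lines or planes in $\Q^3$; by Lemmas~\ref{lemma2} and~\ref{lemma3}, there are $\tau(N)$ orbits of each type, giving $2\tau(N)$ vertices in $T/\Gamma_0(N)$. Edges of $T$ are minimal parabolic subgroups, i.e.\ stabilizers of complete flags; by Lemma~\ref{minimal}, the number of $\Gamma_0(N)$-orbits of these is $\sum_{d\mid N,\,d>0} c(\Gamma_1(d,\Delta)^*)$ where $\Delta=\gcd(d,N/d)$.

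Combining these inputs with the Euler characteristic relation yields
\[
\dim H_1(T/\Gamma_0(N),\Q) = \sum_{d\mid N,\,d>0} c(\Gamma_1(d,\Delta)^*) - 2\tau(N) + 1,
\]
as claimed. The only substantive point is the connectedness of $T/\Gamma_0(N)$; everything else is bookkeeping assembled from the lemmas already proved in this section.
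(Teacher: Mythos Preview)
Your argument is correct and is precisely the Euler-characteristic computation the paper intends: the corollary is stated without proof, immediately after the counts of $\Gamma_0(N)$-orbits of maximal and minimal parabolic subgroups, and the reader is meant to supply exactly the bookkeeping you give. Your only extra input beyond the cited lemmas is the connectedness of $T/\Gamma_0(N)$, which you justify correctly; the bipartite structure (line-type versus plane-type vertices) also guarantees that no edge folds onto itself in the quotient, so the naive Euler characteristic formula applies without further comment.
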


To make it easier to compute the number of cusps, note that since $-I_2$ acts trivially on the upper half-plane, the number of cusps of a subgroup $G$ of $\GL_2(\Z)$ will not change if we consider instead the group $\pm G$ generated by $G$ and $-I_2$.

\begin{example}
Let $N=p$ be a prime.  Then $\tau(N) = 2$ and $d=1,p$.  In both cases, $\Delta=1$.  When $d=1$ we have 
\[c(\pm\Gamma_1(1,1)^*)=c(\GL_2(\Z)) = 1,\]
and when $d=p$ we have
\[c(\pm\Gamma_1(p,1)^*)= c(\Gamma_0(p)^\pm) = 2.\]  We obtain \[b=1+2-2\cdot 2+1=0.\]  
\end{example}

\begin{example}\label{psq}
Let $N=p^2$, with $p$ a prime.  Then $\tau(N) = 3$  and  $d=1,p,p^2$.  In the first and third cases cases, $\Delta=1$ and in the middle case $\Delta=p$.  When $d=1$ we have 
$c(\pm\Gamma_1(1,1)^*) = 1$ as before. When $d=p^2$ we have
\[c(\pm\Gamma_1(p^2,1)^*)=c(\Gamma_0(p^2)^\pm) = \frac{p+3}{2}.\]
Finally, when $d=p$, we have
\[c(\Gamma_1(p,p)^*) = (p-1).\]
We obtain 
\[b=1+(p-1)+\frac{p+3}{2}-2\cdot 3+1=\frac{3p-7}{2}.\]
\end{example}

\begin{example}
Let $N$ be a product of $k$ distinct primes 
so that $\tau(N)=2^k$.  Now for each $d$, $\Delta=1$ so
\[
b= \sum_{d\mid N,d>0} c(\Gamma_0(d)^\pm) - 2\cdot 2^k + 1.
\]  

Let $M$ be a positive divisor of $N$ so that $M$ is also square-free.
From \cite[page 38]{shimura}, the number of $\Gamma_0(M)$-orbits of cusps equals
 $\sum_{t \mid M,t>0}\phi(\gcd(t,M/t))=\tau(M)$.  Since the cusps $1/t$ as $t$ varies over the positive divisors of $M$ are pairwise $\Gamma_0(M)$-inequivalent, they give a complete set of representatives of the orbits.  Given any cusp $a/b$, it is in the orbit of $1/t$ where $t=\gcd(M,b)$.

To figure out $c(\Gamma_0(M)^\pm)$, we have to impose a further equivalence under the matrix $J=\diag(-1,1)$.  
Since $J\begin{bmatrix}u&v\\s&t\end{bmatrix}=\begin{bmatrix}-u&-v\\s&t\end{bmatrix}$, $J$ takes the $\Gamma_0(M)$-orbit of the cusp $1/t$ to the $\Gamma_0(M)$-orbit of the cusp $-1/t$, which is the same orbit.  So $c(\Gamma_0(M)^\pm)=
c(\Gamma_0(M))=2^{\tau(M)}$.  

Next, count the factors of $N$ according to how many prime factors each has.  We get
\[ 
\sum_{d\mid N,d>0} {2^{\tau(d)}} =  1 + \binom{k}{1}\cdot 2+ \binom{k}{2}\cdot 2^2 +\cdots+ 2^k = (1+2)^k = 3^k.
\]

Thus
\[
b= 3^k - 2^{k+1} + 1.
\] 
For instance, if $N$ has two distinct prime factors, then $b=9-8+1=2$.
If $N$ has three distinct prime factors, then $b=27-16+1=12$.
\end{example}

We did not attempt to find a 
completely general formula for the number of cusps of $\Gamma_1(d,\Delta)^*$.  Instead, to check our computations, we wrote code to compute 
$c(\Gamma_1(d,\Delta)^*)$ for all positive divisors $d \mid N$, and thus to compute $b$, for all $N\le 50$.  Our other computations were for prime and prime-squared $N$, which are covered by the examples above.  The output confirms our conjecture and acts as a check on our other computations.

\section{Boundary cohomology}\label{bound}

\begin{notation}
For any parabolic subgroup $P$, if we write $P=LU$ we mean that $U$ is the unipotent radical of $P$ and $L$ is a Levi-component of $P$.
\end{notation}

Let $T$ be the Tits building of $\SL_3(\Q)$, which is defined in Section~\ref{cusps}.  Let $\Gamma$ be a congruence subgroup of $\SL_3(\Z)$,
and let $R$ be a ring. 
We recall the structure of the Borel-Serre compactification~\cite{BS}.  

Let $\cS$ be the set of positive-definite symmetric $3\times 3$ matrices modulo homotheties. It is the symmetric space for $\SL_3(\R)$.
Let $X$ be the Borel-Serre compactification of the locally symmetric space 
$\Gamma\backslash \cS$.

The boundary $\partial X$ is the union of the closure of its maximal ($4$-dimensional) faces 
$\overline e'(P)$, where $P$ runs over a set of representatives $\cP$ of $\Gamma$-orbits of maximal parabolic subgroups of $\SL_3(\Q)$. 
 Any two $\overline e'(P)$'s are either disjoint or intersect in a minimal ($3$-dimensional) face. The minimal faces, which are closed, are the nilmanifolds $e'(Q)$, where
$Q$ runs over a set of representatives of $\Gamma$-orbits of minimal parabolic subgroups 
of $\SL_3(\Q)$. 
  The boundary of an $\overline e'(P)$ is a union of those minimal faces $e'(Q)$ such that some minimal parabolic subgroup in the $\Gamma$-orbit of $Q$ is contained in $P$.  
 
 For a maximal parabolic subgroup $P$, let $H^3_!(\overline e'(P),R)$ denote the kernel of the restriction map 
 $H^3(\overline e'(P),R)\to H^3(\partial\overline e'(P),R)$.
 Given a class $z\in H^3_!(\overline e'(P),R)$, it extends by 0 to a class in 
 $H^3(\partial X,R)$ which we  call $z^0$.
 It follows from the Mayer-Vietoris sequence that the span of all such $z^0$ forms a subspace of  $H^3(\partial X,R)$ which restricts isomorphically onto
 $\bigoplus_P  H^3_!(\overline e'(P),R)$.
 
 \begin{definition}
 Let $A'(\Gamma)$ be the $\Q$-span of all $z^0$, where $z\in H^3_!(\overline e'(P),\Q)$ and $P$ runs over $\cP$. 
 \end{definition}

Now assume that 6 is invertible in $R$. 
 Because $X$ is contractible and $\Gamma$ acts properly discontinuously $X$ with finite stabilizers of cardinalities dividing $6^\infty$, there is a natural isomorphism $H^*(\Gamma,R)\simeq H^*(X,R)$ and we will identify these cohomology groups accordingly.

\begin{definition}\label{cuspdef} 
The kernel of the restriction map 
$H^3(X,R)\to H^3(\partial X,R)$ is called the ``interior'' cohomology, and 
denoted by $H_!^3(X,R)$.  
In the introduction, we denoted the image of $H_!^3(X,\Q)$ under the isomorphism 
$H^3(X,\Q)\approx H^3(\Gamma,\Q)$
 by  $H_!^3(\Gamma,\Q)$. 

If $R=\C$, $H^3_\cusp(\Gamma,\C)$ is defined to be the subspace of 
$H^3(\Gamma,\C)$ represented by cuspidal automorphic differential 3-forms on $X$.

If $R$ is a subring of $\C$, $H^3_\cusp(\Gamma,R)$ is defined to be 
$H^3_\cusp(\Gamma,\C)\cap H^3(\Gamma,R)$.

\end{definition}

From \cite{lee-schwermer} we know
\begin{itemize}
\item  If $R$ is a field of characteristic zero, the restriction map 
\[r \colon H^3(\Gamma,R)=H^3(X,R)\to H^3(\partial X,R)\] is surjective.

\item $H_!^3(X,\Q)$, is equal to $H^3_\cusp(\Gamma,\Q)$.

\item  $A'(\Gamma)\otimes\C$ is isomorphic to the direct sum of the spaces of holomorphic cuspforms of weight 2 for 
$\Gamma_L$, where $P=LU$ runs over a set of representatives 
of $\Gamma$-orbits of maximal parabolic subgroups of $\SL_3(\Q)$.

\item $ H^3(\partial X,\Q)= A'(\Gamma)\oplus B'(\Gamma)$, where
$B'(\Gamma)$ is isomorphic to $H_1(T/\Gamma,\Q)$.
\end{itemize}
The decomposition in the last bullet is Hecke-equivariant.  The restriction map in the first bullet is also Hecke-equivariant, and 
we may then conclude: 
 There is a Hecke-equivariant decomposition 
 \[H^3(\Gamma,\Q)=H^3_\cusp(\Gamma,\Q)\oplus A(\Gamma)\oplus B(\Gamma),\]
 where the restriction map $r$ induces isomorphisms $A(\Gamma)\simeq A'(\Gamma)$ and $B(\Gamma)\simeq B'(\Gamma)$.

The dimension of $B(\Gamma)$ equals
the number of $\Gamma$-orbits of minimal parabolic subgroups of $\SL_3(\Q)$ minus the number of $\Gamma$-orbits of maximal parabolic subgroups of $\SL_3(\Q)$ plus 1.

\begin{remark} What Lee and Schwermer actually provide in their paper is the structure of $H^3(\Gamma(N),\C)$ as 
$\SL_3(\Z)/\Gamma(N)$-module, where $\Gamma(N)$ is the principle congruence subgroup of $\SL_3(\Z)$ of level $N$.  We choose $N$ so that $\Gamma(N)$ is contained in $\Gamma$.
Since the $\C$-cohomology is the tensor product of the $\Q$-cohomology
 with $\C$, the bulleted assertions follow by taking 
 $\Gamma/\Gamma(N)$-invariants in $H^3(\Gamma(N),\C)$ and descending to $\Q$.
 \end{remark}
 
 \section{Conjectures}\label{conj}

Our computational results give us confidence to make the conjectures in this section.  We want to state the conjectures in a way that conforms to our method of computation.  For this reason we
have to interpret modular symbols in terms of the Voronoi cellulation.  This gives a concrete realization of the Borel-Serre isomorphism 
$H_0(\Gamma,\St) \approx H^3(\Gamma,\Q)$. For more on the Voronoi cellulation, see Section~\ref{what}.  

\begin{definition} \label{vor-real} 
 Fix as basepoint the identity matrix $I_3$ in the symmetric space $\cS$.  Let $D$ be the closure of the orbit of $I_3$ under the diagonal matrices in the Borel-Serre bordification $\overline\cS$ of $\cS$.  It is a hexagon, in the sense that it is $2$-dimensional and has six edges, each in a different boundary face.
  If $m\in\GL_3(\Q)$ and $[m]_\Gamma$ is the corresponding modular symbol 
  modulo $\Gamma$, define the  
``Voronoi realization'' of $[m]_\Gamma$ to be the projection modulo 
$\Gamma$ of $mD$.  We fix an orientation $o$ on $[I_3]_\Gamma$ and give  $[m]_\Gamma$ the orientation induced from $o$ via the action of $m$ on 
$\overline\cS$.

Define the
``dual Voronoi realization'' of $[m]_\Gamma$ to be $V(m)\in H^3(\Gamma,\Q)$ where $V(m)$ is the Lefschetz dual of the homology class in $H_2(X,\partial X,\Q)$ which is the fundamental class of the Voronoi realization of $[m]_\Gamma$.

If $Y$ is a $\Q$-subspace of $H_0(\Gamma,\St)$, call $V(Y)$ the ``dual Voronoi realization of $Y$'', where
$V(Y)=\set{V(m) \given m\in Y}$.  
\end{definition} 

\begin{lemma}\label{V}
The dual Voronoi realization map $V \colon  H_0(\Gamma,\St) \to H^3(\Gamma,\Q)$ is an isomorphism.
\end{lemma}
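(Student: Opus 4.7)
The plan is to recognize $V$ as a geometric realization of the Borel-Serre duality isomorphism of Theorem~\ref{BS}, and thereby reduce the lemma to that theorem.

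First, I would set up Lefschetz duality on $X$. The symmetric space $\cS=\SL_3(\R)/\mathrm{SO}(3)$ has real dimension $5$, and its Borel-Serre bordification $\overline{\cS}$ is contractible and a manifold with corners of the same dimension. Because we work with $\Q$-coefficients (in which every torsion order of $\Gamma$ is invertible) and $X=\Gamma\backslash\overline{\cS}$ is compact and orientable in the cohomological sense, one obtains a canonical Lefschetz duality isomorphism
\[
PD\colon H_2(X,\partial X;\Q)\xrightarrow{\;\sim\;} H^3(X;\Q)=H^3(\Gamma,\Q).
\]

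Next, I would construct a ``geometric realization'' map $\rho\colon H_0(\Gamma,\st)\to H_2(X,\partial X;\Q)$ that sends $[m]_\Gamma$ to the fundamental class of the projection of $mD$. For this one checks that $D$ is a compact $2$-cell of $\overline{\cS}$ with boundary lying in $\partial\overline{\cS}$: the diagonal split torus of $\SL_3(\R)$ has real dimension $2$ and acts on $I_3$ with finite stabilizer, so its orbit is $2$-dimensional; the six edges of $\overline D$ correspond to the six Borel subgroups of $\SL_3(\Q)$ containing the diagonal torus, producing a hexagon. Each translate $mD$ is then a hexagonal $2$-cycle modulo $\partial\overline\cS$, which descends to a relative $2$-cycle in $(X,\partial X)$. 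Well-definedness of $\rho$ requires checking that the defining relations for $\St(\Q^3)$ from Theorem~\ref{thm:mod-props} become homological identities: degeneracy for singular $m$ holds because $mD$ then collapses into $\partial\overline{\cS}$; the permutation and scalar relations are absorbed by the fixed orientation $o$; and the ``passing through $x$'' relation follows from subdividing the hexagon along the geodesic corresponding to $x$.

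Finally, I would identify $PD\circ\rho$ with $\lambda$. In~\cite{BS}, the Borel-Serre isomorphism is constructed by cap product with the relative fundamental class of $(X,\partial X)$, and on the chain level this cap product sends the Steinberg generator $[m]_\Gamma$ to precisely the Lefschetz dual of the hexagonal cycle $[mD]\in H_2(X,\partial X;\Q)$. Hence $V=PD\circ\rho=\lambda$, and the lemma follows from Theorem~\ref{BS}.

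The main obstacle is this last identification. Theorem~\ref{BS} furnishes $\lambda$ by an abstract homological construction, whereas $V$ is defined in terms of explicit hexagonal cycles coming from the diagonal torus. Unwinding the cap-product description of~\cite{BS} and matching it against the Ash-Rudolph style chain model for modular symbols, including all sign and orientation conventions, is where the real bookkeeping of the proof lives.
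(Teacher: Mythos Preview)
Your plan aims at a stronger conclusion than the lemma asks for: you try to show that $V$ \emph{coincides} with the Borel--Serre isomorphism $\lambda$ of Theorem~\ref{BS}. That identification is exactly the point you flag as ``the main obstacle'', and you do not actually carry it out---the assertion that the cap product in~\cite{BS} sends $[m]_\Gamma$ to the Lefschetz dual of $[mD]$ is plausible but left unverified. As written, the argument has a gap at precisely this step.

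The paper sidesteps this entirely. It does not attempt to match $V$ with any particular isomorphism; it only uses Theorem~\ref{BS} for the numerical fact that $\dim_\Q H_0(\Gamma,\St)=\dim_\Q H^3(\Gamma,\Q)$. The proof is then: check directly (``unwinding the definitions'') that $V$ is injective, and conclude surjectivity from the dimension equality. So the paper never needs to know \emph{which} isomorphism $V$ is, only that it is one. Your route, if completed, would give more---namely that $V$ realizes Borel--Serre duality on the nose---but that is both unnecessary for the lemma and substantially harder to pin down.
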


\begin{proof}
Unwinding the definitions, it is easy to see that $V$ is injective.  Since the source and target have the same dimension because of Borel-Serre duality, $V$ is also surjective.
\end{proof}

Recall that $A(\Gamma)$ and $B(\Gamma)$ are defined in Section~\ref{bound},  $T$ denotes the Tits building for $\GL_3(\Q)$, and $H(\Gamma,E)$ is given in Definition~\ref{hge}.

\begin{cnj}\label{conj-1}
Let $E$ be a real quadratic field, and let 
$\Gamma\subset\SL_3(\Z)$ be a finite index subgroup.  Then the dual Voronoi realization of $H(\Gamma,E)$ is $H^3_!(\Gamma,\Q)+A(\Gamma)$.
\end{cnj}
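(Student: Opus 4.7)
The plan is to verify the conjectured equality in three pieces, using the Hecke-equivariant decomposition $H^3(\Gamma,\Q)=H^3_!(\Gamma,\Q)\oplus A(\Gamma)\oplus B(\Gamma)$ from Section~\ref{bound} together with the isomorphism of Lemma~\ref{V}. Write $W=V(H(\Gamma,E))\subseteq H^3(\Gamma,\Q)$. It suffices to prove: (i) $W$ is contained in $H^3_!(\Gamma,\Q)+A(\Gamma)$, equivalently the projection of $W$ to $B(\Gamma)$ vanishes; (ii) $A(\Gamma)\subseteq W$; and (iii) $H^3_!(\Gamma,\Q)\subseteq W$. Hecke-stability of $W$ will be used throughout.

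For (i), I would exploit the rigid geometric shape of the generators. By Algorithm~\ref{algo}, every generator has the form $[f,\gamma f,da]_\Gamma$ with $f=de$ lying in the $\gamma$-stable plane $V=dH$, where $H=\trans{(*,*,0)}$. Under the Voronoi realization the hexagon attached to $m=[f\mid \gamma f\mid da]$ has two opposite vertices sitting at the face of the maximal parabolic $\Stab(V)$, while its other four vertices are the maximal parabolics stabilizing $\gen{f}$, $\gen{\gamma f}$, $\gen{da}$, and the plane spanned by $f$ (or $\gamma f$) and $da$. Passing under the restriction $r\colon H^3(\Gamma,\Q)\to H^3(\partial X,\Q)$ and then to $B'(\Gamma)=H_1(T/\Gamma,\Q)$ records the signed edges of this hexagon in the quotient Tits graph; because $\gamma\in\Gamma$ identifies $\Stab(\gen{f})$ with $\Stab(\gen{\gamma f})$ modulo $\Gamma$ and the two $\Stab(V)$-vertices coincide after quotienting, the resulting signed $1$-cycle is a boundary, so the projection to $B(\Gamma)$ vanishes. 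I would organize the computation through the Mayer--Vietoris spectral sequence for the covering $\partial X=\bigcup_P\overline e'(P)$ underlying the Lee--Schwermer decomposition.

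For (ii), I would use the description $A'(\Gamma)\otimes\C\cong \bigoplus_{P=LU}S_2(\Gamma_L)$ and reduce to the $n=2$ case treated in~\cite{AY}. Fix a maximal parabolic $P$ stabilizing a plane, with Levi quotient $\Gamma_L$ as in Lemma~\ref{lines}. A generator $[f,\gamma f,da]_\Gamma$ attached to $d$ in the $\Gamma$-orbit of $P$ restricts to the face $\overline e'(P)$ as the $\SL_2$-modular symbol $[f,\gamma f]$ for $\Gamma_L$, with the image of $\gamma$ in $\Gamma_L$ equal to the unital matrix $h$ from Step~1 of Algorithm~\ref{algo}. The (GRH-conditional) main result of~\cite{AY} says that as $h$ varies over units of real quadratic fields, these $\SL_2$-symbols span $H^1_\cusp(\Gamma_L,\Q)$, which is exactly the $P$-summand of $A(\Gamma)$. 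Lifting back from the boundary is controlled by the surjectivity of $r$ together with the Hecke-equivariant splitting off $A(\Gamma)$, so (ii) will follow once one verifies that the restriction of $V$ on generators agrees with the $n=2$ Borel--Serre isomorphism on each face.

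The main obstacle, in my view, is (iii). There is no apparent geometric reason why modular symbols built out of real quadratic units should detect the full cuspidal cohomology of $\Gamma$, and the authors themselves offer no heuristic for this part. My planned attack is Hecke-theoretic: for each cuspidal Hecke eigenclass $\xi\in H^3_\cusp(\Gamma,\C)$, produce a generator $[f,\gamma f,da]_\Gamma$ whose Poincar\'e pairing with $\xi$ is nonzero. That pairing can be written as a period of the cuspidal differential $3$-form over the Voronoi hexagon attached to the symbol, which factors as an integral over the closed geodesic of the real quadratic unit $\eta\in\O_E^\times$ acting on the plane $V$ times the cusp direction $da$. Non-vanishing ought then to reduce to a non-vanishing statement for central values of twisted $L$-functions attached to $\xi$ and $E$, in the spirit of the GRH-conditional argument of~\cite{AY}. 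Making this precise, and in particular identifying the correct $L$-value that governs the pairing, is the genuinely hard step.
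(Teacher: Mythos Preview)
The statement you are attempting to prove is Conjecture~\ref{conj-1}, and the paper does \emph{not} prove it. The authors state explicitly, immediately after Theorem~\ref{12}: ``Beyond the experimental evidence, we do not know why $V(H(\Gamma,E))$ should contain $H^3_!(\Gamma,\Q)$ nor why its intersection with $B(\Gamma)$ should be trivial. We have the following heuristic as to why it might contain $A(\Gamma)$, but it is far from being a proof.'' So there is no proof in the paper to compare your proposal against; you are sketching an attack on an open problem.

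Your part~(ii) parallels the paper's heuristic almost exactly: restrict a generator to the face $\overline e'(P)$, recognize the $\SL_2$-symbol $[f,\gamma f]_{\Gamma_L}$ with $\gamma$ unital, and invoke the GRH-conditional result of~\cite{AY}. But the paper is candid about the missing step: one must show that the contributions of the \emph{other five edges} of the hexagon can be made to cancel as $\gamma,f,a$ vary. Your phrase ``lifting back \dots\ is controlled by the surjectivity of $r$ together with the Hecke-equivariant splitting'' does not address this; surjectivity of $r$ lets you lift from $A'(\Gamma)$ to $A(\Gamma)$, but it does not tell you the lift lands in $W$.

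Your part~(i) has a real gap. Identifying $\Stab(\gen{f})$ with $\Stab(\gen{\gamma f})$ modulo $\Gamma$ and collapsing the two $\Stab(V)$-vertices turns the hexagon into a closed $1$-cycle in $T/\Gamma$, but a closed $1$-cycle need not be a boundary; $H_1(T/\Gamma,\Q)$ is precisely $B'(\Gamma)$, which is typically nonzero. You would need an explicit $2$-chain in $T/\Gamma$ bounding it, and $T$ is $1$-dimensional, so no such $2$-chain exists in the building itself. The vanishing of the $B$-component, if true, must come from cancellations among different generators, not from a single hexagon. The authors offer no mechanism for this.

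For~(iii) you correctly identify the crux and propose reducing to nonvanishing of twisted central $L$-values. This is a reasonable line of attack by analogy with~\cite{AY}, but note that even in the $n=2$ case the result is conditional on GRH, and for $\GL_3$ cuspidal classes the relevant period formula and nonvanishing input are not available in the literature. Your proposal is a plan, not a proof; the paper makes no stronger claim.
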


\begin{cnj}\label{conj-2}
Let $E$ and $\Gamma$ be as in Conjecture~\ref{conj-1}.  Then
\[
\dim_\Q (H(\Gamma, E))=\dim_\Q (H^3(\Gamma,\Q))-\dim_\Q (H_1(T/\Gamma,\Q)).
\] 
\end{cnj}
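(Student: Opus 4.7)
The plan is to derive Conjecture~\ref{conj-2} as a corollary of Conjecture~\ref{conj-1}, then to sketch how I would attack Conjecture~\ref{conj-1}. By Lemma~\ref{V}, the dual Voronoi realization $V\colon H_0(\Gamma,\St)\to H^3(\Gamma,\Q)$ is a $\Q$-linear isomorphism, so $\dim_\Q H(\Gamma,E)=\dim_\Q V(H(\Gamma,E))$. The Lee--Schwermer decomposition recalled in Section~\ref{bound} gives
\[
H^3(\Gamma,\Q) = H^3_!(\Gamma,\Q)\oplus A(\Gamma)\oplus B(\Gamma),
\]
and $\dim_\Q B(\Gamma)=\dim_\Q H_1(T/\Gamma,\Q)$. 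Thus Conjecture~\ref{conj-1}, which asserts $V(H(\Gamma,E))=H^3_!(\Gamma,\Q)+A(\Gamma)$, immediately yields the codimension formula of Conjecture~\ref{conj-2}. So the problem reduces to establishing the two containments in Conjecture~\ref{conj-1}.

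For the containment $V(H(\Gamma,E))\subseteq H^3_!(\Gamma,\Q)+A(\Gamma)$, I would compose $V$ with the surjective restriction
\[
r\colon H^3(\Gamma,\Q)\to H^3(\partial X,\Q)=A'(\Gamma)\oplus B'(\Gamma),
\]
where $B'(\Gamma)\simeq H_1(T/\Gamma,\Q)$ is detected by the nerve combinatorics of $\partial X$, and show that the $B'$-component vanishes on each generator. Concretely, a generator $V([f,\gamma f,da]_\Gamma)$ is Lefschetz dual to the fundamental class of the projection modulo $\Gamma$ of $m D$, where $m=[f,\gamma f,da]$ and $D$ is the Borel--Serre closure of the diagonal-torus orbit. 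Its boundary meets $\partial X$ in the three edges of $mD$ that lie in the maximal faces $\overline e'(P)$ for $P$ the stabilizer of each of the three flags spanned by pairs of columns of $m$. The point is that all three maximal parabolics share the vertex corresponding to the plane containing $\{f,\gamma f\}$, so the $1$-cycle on $T/\Gamma$ extracted from the boundary trace collapses to a cone and is a boundary in $H_1(T/\Gamma,\Q)$. Pairing this with the explicit description of the $B'$-component as the first homology of the nerve, the $B'$-projection of $r\circ V([f,\gamma f,da]_\Gamma)$ vanishes.

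For the reverse containment, the natural approach is to use Hecke equivariance. Since $H(\Gamma,E)$ is Hecke-stable and the decomposition $H^3_!\oplus A\oplus B$ is Hecke-equivariant, $V(H(\Gamma,E))$ is a sum of isotypic components, so it suffices to show that no cuspidal or $A$-isotypic component is missed. On the $A$-side one would formalize the heuristic of Section~\ref{conj}: the symbols $[f,\gamma f,da]_\Gamma$ built from $\beta$-unital matrices are, after restricting to $\overline e'(P)$, naturally the modular-symbol classes in $H^3_!(\overline e'(P),\Q)$ coming from the cycles associated to the fundamental unit of $E$ acting on the Levi $\Gamma_L\subset\GL_2(\Z)$, which are known to span cuspidal weight-$2$ cohomology of $\Gamma_L$ (as treated in~\cite{AY}, conjecturally unconditionally, and unconditionally under GRH). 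On the cuspidal side one would combine this with strong multiplicity one: any Hecke eigensystem appearing in $H^3_!(\Gamma,\Q)$ matches a unique isotypic component and one needs only to produce a single nonzero projection of some $V([f,\gamma f,da]_\Gamma)$ onto it.

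The hard part will be the reverse containment, especially for $H^3_!(\Gamma,\Q)$. Already in the $n=2$ analogue of~\cite{AY} the analogous surjectivity required GRH, and here there is no obvious Eisenstein-cohomology or trace-formula machinery producing the required nonvanishing from the units of a single real quadratic field $E$. The upper containment, by contrast, appears to be a genuinely combinatorial statement about how the three boundary traces of the Voronoi hexagon $mD$ sit in the nerve $T/\Gamma$, and I expect it can be made rigorous by a careful bookkeeping argument using Lemmas~\ref{lines}--\ref{minimal}.
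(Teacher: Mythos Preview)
Your opening paragraph is exactly what the paper proves, namely Theorem~\ref{12}: via Lemma~\ref{V} and the Lee--Schwermer decomposition $H^3(\Gamma,\Q)=H^3_!(\Gamma,\Q)\oplus A(\Gamma)\oplus B(\Gamma)$ with $\dim B(\Gamma)=\dim H_1(T/\Gamma,\Q)$, Conjecture~\ref{conj-1} immediately yields the codimension formula. That part is correct and matches the paper.

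Everything after that goes beyond what the paper claims. Conjecture~\ref{conj-2} is stated as a conjecture, not a theorem; the paper only proves the implication from Conjecture~\ref{conj-1} and then supports both conjectures by computation. The authors say explicitly in Section~\ref{conj} that ``beyond the experimental evidence, we do not know why $V(H(\Gamma,E))$ should contain $H^3_!(\Gamma,\Q)$ nor why its intersection with $B(\Gamma)$ should be trivial,'' and they offer only a heuristic (conditional on GRH via~\cite{AY}) for the containment of $A(\Gamma)$. So your second through fourth paragraphs are a proposed attack on an open problem, not a comparison point with the paper.

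Your upper-containment sketch also has concrete errors. The hexagon $mD$ has six boundary edges, not three: one for each of the three line-stabilizers of the columns of $m$ and one for each of the three plane-stabilizers of the pairs of columns (see Definition~\ref{vor-real} and the heuristic paragraph in Section~\ref{conj}, which refers to ``five other terms coming from the other edges of the hexagon''). The claim that ``all three maximal parabolics share the vertex corresponding to the plane containing $\{f,\gamma f\}$'' misreads the Tits building: vertices of $T$ \emph{are} the maximal parabolics, so the plane $\langle f,\gamma f\rangle$ is a single vertex, not something three parabolics share. The boundary trace of $mD$ in $T$ is precisely the apartment hexagon attached to the frame $(f,\gamma f,da)$, which is the class $[m]\in\St=\widetilde H_1(T)$ itself. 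It is true that $\gamma$ identifies the six edges of this apartment in pairs in $T/\Gamma$ (since $\gamma\langle f\rangle=\langle\gamma f\rangle$, $\gamma\langle f,da\rangle=\langle\gamma f,da\rangle$, and $\gamma$ fixes $\langle da\rangle$ and $\langle f,\gamma f\rangle$), so there is genuine collapsing; but your ``cone'' description does not capture this, and turning the observation into a proof that the image in $H_1(T/\Gamma,\Q)$ vanishes is exactly the step the paper does not know how to do.
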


\begin{theorem}\label{12}
Conjecture~\ref{conj-1} implies Conjecture~\ref{conj-2}.
\end{theorem}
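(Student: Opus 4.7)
The plan is essentially a dimension chase through the ingredients already assembled in Sections~\ref{bound} and the statement of Lemma~\ref{V}. First I would invoke Lemma~\ref{V}, which says that the dual Voronoi realization $V\colon H_0(\Gamma,\St)\to H^3(\Gamma,\Q)$ is an isomorphism; in particular it restricts to an isomorphism from $H(\Gamma,E)$ onto its image, so
\[
\dim_\Q H(\Gamma,E) = \dim_\Q V(H(\Gamma,E)).
\]

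Next I would plug in Conjecture~\ref{conj-1} to rewrite $V(H(\Gamma,E)) = H^3_!(\Gamma,\Q) + A(\Gamma)$, and then appeal to the Hecke-equivariant decomposition recorded at the end of Section~\ref{bound},
\[
H^3(\Gamma,\Q) = H^3_\cusp(\Gamma,\Q)\oplus A(\Gamma)\oplus B(\Gamma),
\]
together with the identification $H^3_!(\Gamma,\Q) = H^3_\cusp(\Gamma,\Q)$ from the same section. Because the decomposition is a direct sum, the subspace $H^3_!(\Gamma,\Q)+A(\Gamma)$ is complementary to $B(\Gamma)$, so its dimension is $\dim_\Q H^3(\Gamma,\Q) - \dim_\Q B(\Gamma)$.

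Finally I would use the isomorphism $B(\Gamma)\simeq B'(\Gamma)\simeq H_1(T/\Gamma,\Q)$ (the second isomorphism is the last bullet of Section~\ref{bound}, the first is the Hecke-equivariant identification under $r$ recorded there). Combining the three displays yields
\[
\dim_\Q H(\Gamma,E) = \dim_\Q H^3(\Gamma,\Q) - \dim_\Q H_1(T/\Gamma,\Q),
\]
which is exactly Conjecture~\ref{conj-2}.

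There is no real obstacle here: the theorem is a formal consequence of Lemma~\ref{V} and the direct-sum decomposition of $H^3(\Gamma,\Q)$ imported from Lee--Schwermer. The only minor point requiring care is that we really use the direct sum (so that adding $B(\Gamma)$ to $H^3_!(\Gamma,\Q)+A(\Gamma)$ genuinely accounts for all of $H^3(\Gamma,\Q)$ without overlap), and this is already stated in Section~\ref{bound}.
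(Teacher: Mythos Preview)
Your proposal is correct and follows essentially the same route as the paper: invoke Lemma~\ref{V}, apply Conjecture~\ref{conj-1}, and read off the codimension of $H^3_!(\Gamma,\Q)\oplus A(\Gamma)$ in $H^3(\Gamma,\Q)$ as $\dim_\Q B(\Gamma)=\dim_\Q H_1(T/\Gamma,\Q)$ from the Lee--Schwermer decomposition. The paper's proof is simply a terser version of exactly this argument.
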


\begin{proof}
From Section~\ref{bound} we know that 
$H^3(X,\Q)\simeq H^3_!(X,\Q)\oplus A \oplus B$.
 So Lemma~\ref{V} and Conjecture~\ref{conj-1} imply that 
the codimension of  $H(\Gamma,E)$ equals the dimension of $B(\Gamma)$, 
namely $\dim_\Q (H_1(T/\Gamma,\Q))$.
\end{proof}

We first formulated Conjecture~\ref{conj-2} on the basis of purely numerical data from our computations.  Later we formulated Conjecture~\ref{conj-1} and were able to check it by 
computing Hecke operators on $H(\Gamma,E)$ and on $H^3(\Gamma,\Q)$.
See Section~\ref{results}.

Beyond the experimental evidence, we do not know why $V(H(\Gamma,E))$ should contain $H^3_!(\Gamma,\Q)$ nor why its intersection with $B(\Gamma)$ should be trivial.  We have the following heuristic as to why it might contain $A(\Gamma)$, but it is far from being a proof.

Consider the commutative diagram (suppressing the $\Q$-coefficients in the notation):
\[
\xymatrix{
H^3(X) \ar[r] & H^3(\partial X)\\
H_1(X,\partial X) \ar[r] \ar[u]& H_1(\partial X) \ar[u]
}
\]
where the top map is restriction, the bottom map is the boundary map, the left hand vertical map is Lefschetz duality and the right hand vertical map is Poincar\'e duality.
(Since we are using $\Q$-coefficients and the spaces we are considering are orbifolds with finite stabilizers,  Lefschetz and Poincar\'e duality apply to their homology and cohomology.)

If $\cP$ is a set of representatives of $\Gamma$-orbits of maximal parabolic subgroups of $\SL_3(\Q)$,  then $A(\Gamma)=\bigoplus_{P\in\cP}H^3(\overline e'(P),\Q)$. 
By Lefschetz duality $H_!^3(\overline e'(P),\Q)$ is isomorphic to a subspace of 
$H_1(\overline e'(P),\Q)$, call it $H_1^!(\overline e'(P),\Q)$.

Let $m=[f,\gamma f,da]_\Gamma\in H(\Gamma_0(N),E)$, as in Algorithm~\ref{algo}.  Let $P=LU$ be the parabolic subgroup which is the stabilizer of the plane spanned by $f$ and 
$\gamma f$.  Then the Voronoi realization of $m$
in $H_1(X,\partial X)$ is a hexagon,
one edge of which, call it $\eta$, lies in the face $e'(P)$.  The fundamental class of $\eta$ lies in 
$H_1^!(\overline e'(P),\Q)$.

Let $p\colon P\to P/U$ be the projection, $\Gamma_L=p(\Gamma\cap L)$ 
and $\Gamma_U=\Gamma\cap U$.
Then $e'(P)$ is a fibration with base $B_L$ and fiber $F$, 
where $B_L$ is isomorphic to the upper half-plane modulo $\Gamma_L$ and $F$ is the torus 
$U(\R)/\Gamma_U$.  Let $X_L$ denote the Borel-Serre compactification of $B_L$.  Then the projection $\eta'$ to $X_L$ of the edge $\eta$ is the Voronoi realization 
of the modular symbol $[\gamma f,f]_{\Gamma_L}$ in $X_L$.   

From the commutative diagram we can see that the Voronoi dual of $m$ equals the Voronoi dual of $\eta$ (extended by 0) plus five other terms coming from the other edges of the hexagon.   
Now $[\gamma f,f]_{\Gamma_L}$ equals $[\delta f,f]_{\Gamma_L}$, where $\delta$ is a unital element in $\Gamma_L$.
By~\cite[Theorem~9.3]{AY}
(which assumes the Generalized Riemann Hypothesis) the Voronoi duals of  
$[\delta f,f]_{\Gamma_L}$ span $H_!^1(X_L,\Q)$
as we vary $\delta$ over all unital elements of $\Gamma_L$.  

It seems likely that as we vary $\gamma$, $f$, and $a$ we could find a linear combination of 
various $[\gamma f,f]_{\Gamma_L}$'s, whose contribution to
$H_!^1(X_L,\Q)$ equals any desired element, while
the fundamental classes of the other edges of the hexagons cancel out.
This would explain why $A(\Gamma)$ is contained in $V(H(\Gamma,E))$.
  
 \section{Hecke operators and Galois representations}\label{hecke}

The tame Hecke algebra $\HH_{n,N}$ is the commutative $\Z$-algebra under convolution generated by the double cosets $T(\ell,k)=\Gamma_0(n,N) D(\ell,k) \Gamma_0(n,N)$ with 
\[D(\ell,k)=\diag(\underbrace{1,\cdots,1}_{n-k},\underbrace{\ell,\cdots,\ell}_k).\]
for all prime $\ell\nmid pN$.    Let $S_{n,N}$ denote the subgroup of $\GL_n(\Q)$ generated by the elements in all these double cosets.

A Hecke packet over a ring $R$ is an algebra homomorphism $\phi\colon\HH_{n,N}\to R$. If $W$ is an $\HH_{n,N}\otimes R$-module and $w\in W$ is a simultaneous eigenvector for all $T\in\HH_{n,N}$, then the associated eigenvalues give a Hecke packet.

\begin{definition} \label{def:attached} Let $\phi$ be a Hecke packet with $\phi(T(\ell,k))= a(\ell,k)$.
We say that the Galois representation $\rho\colon G_\Q\to\GL_n(R)$ is attached to 
$\phi$ if $\rho$ is unramified outside $pN$ and 
\[\det(I-\rho(\frob_\ell)X)=\sum_{k=0}^n(-1)^k\ell^{k(k-1)/2}a(\ell,k)X^k\]
for all $\ell\nmid pN$.
(This is the arithmetic Frobenius: if $\omega$ be the cyclotomic character, 
$\omega(\frob_\ell)=\ell$.)
When $\phi$ comes from a Hecke eigenvector $w$,
we say that $\rho$ is attached to $w$.
\end{definition}

There is a natural action of a double coset $T(\ell,k)\in\HH_{n,N}$ on the homology $H_*(\Gamma_0(n, N),M)$ and on the cohomology 
$H^*(\Gamma_0(n,N),M)$  for any $S_{n,N}$-module $M$.  Now let $n = 3$ and $\Gamma = \Gamma_0(3,N)$.
 It is known from \cite{HLTT, scholze}  that there is a Galois representation attached to each Hecke eigenclass in $H_\cusp^3(\Gamma,\C)$.   From  \cite[Section~3.2]{ash-stevens}, it follows that
if $z\in A(\Gamma)\otimes_\Q\C$ is a Hecke eigenclass, then it has an attached Galois representation which is the direct sum of a Dirichlet character of conductor dividing $N$ and an odd two-dimensional representation coming from a holomorphic modular form of weight 2 and level dividing $N$ and trivial nebentype. Adapting the proof of this result to the case of a minimal parabolic subgroup, one shows that
if $z\in B(\Gamma)\otimes_\Q\C$ is a Hecke eigenclass, then it has an attached Galois representation which is the direct sum of 3 Dirichlet characters of levels dividing $N$.  
Therefore we can use the attached Galois representations as a quick way of identifying Hecke eigenclasses in $H^3(\Gamma,\C)$.

Because the short exact sequence 
\[
0 \to \St(\Q^3;R) \to \St(E^3;R) \to C\to 0
\]
 is equivariant for the action of $\GL_3(\Q)$, the long exact sequence of homology derived from it is equivariant for $\HH_{3,N}$.  Therefore the connecting homomorphism $\psi$
  is $\HH_{3,N}$-equivariant, and $H(\Gamma,E)$ is $\HH_{3,N}$-stable.  This gives a check on our computations. 
We diagonalize the Hecke operators on $H^3(\Gamma,\Q)$  and $H(\Gamma,E)$, and thereby verify Conjecture~\ref{conj-1} for the range of levels $N$ and fields $E$ specified in the introduction.  See Section~\ref{results} for an example of how we do this.

\section{Voronoi homology} \label{what}

We make the computations using already-existing programs that find the Voronoi homology of arithmetic subgroups of $\GL_n(F)$ for arbitrary number fields $F$.  For this reason, we need to know, when $F=\Q$, that the Voronoi homology over $R$ in degree $0$ is isomorphic to $H_0(\Gamma, \st(\Q^n,R))$.
We also need to make this isomorphism explicit, so that we know how to express the modular symbols in $H(\Gamma,E)$ in terms of the Voronoi homology.  Although in this paper we only need the case $n=3$ and $R=\Q$, for future purposes we take $n\le4$ and work over a more general ring $R$.  We do not know if the analogue of Theorem~\ref{whatwhat} for $n>4$ is true.
For more background on the Voronoi decomposition and Voronoi homology see \cite{voronoi1, AGM2, PerfFormModGrp}. 

First, we describe the Voronoi complex.  To conform with the notation in \cite{AY}, in this section we use a notation that clashes with that of Section~\ref{bound}.
Let $\Gammabar = \GL_n(\Z)$ 
and let $\Gamma$ be a subgroup of finite index in $\Gammabar$.
Let $V$ be the $n(n+1)/2$-dimensional vector space of 
real symmetric 
$n\times n$ matrices. 
 Let $C \subset V$ denote the open cone of
positive definite symmetric matrices, and  $q \colon \Z^n \to V$ be the map
$q(v) = v \ \trans{v}$.  (All vectors in this section are column vectors.)  

For each nonzero, proper subspace $W$ of $\Q^n$, let $C(W)\subset V$ denote the cone of 
positive semi-definite symmetric matrices whose kernel is $W\otimes\R$.  Set $\overline C$ to be the union of $C$ and all the $C(W)$'s.  Let
$X =C/\R_+$ and $\overline X=\overline C/\R_+$,
where $\R_+$ acts on $C$ by scaling.  

The group
$\Gammabar$ acts on $V$: for $\gamma \in \Gammabar$ and $A \in V$,
$\gamma \cdot A = \gamma A \ \trans{\gamma}$.  This action restricts
to an action on $\overline C$, which descends to an action on $\overline X$.  
There is a cellular tessellation of $X$ called the Voronoi decomposition of $\overline X$.   Each cell is the conical convex hull (modulo $\R_+$) of 
$q(v_i), i=1,2,\dots,m$, where $\{\pm v_i\}$ are the minimal vectors of some positive definite real quadratic form.

Assume now that $n\le 4$.  
There is one $\Gammabar$-orbit of cells of dimension $n-1$.  As a representative,
take $\sigma_0$, the one with minimal vectors given by $\set{\pm e_i \given i = 1, 2, \dots, n}$, where 
$\set{e_i}$ is the standard basis of $\Z^n$.
For $g\in\Gammabar$, denote by $\sigma(g)$ the Voronoi cell $g\sigma_0$.  

Let $V^\Z_k$ be the $\Z$-module of oriented $k$-chains in the Voronoi cellulation of 
$\overline X$ modulo $\partial \overline X=\overline X\setminus X$.  It is the 
$\Z$-module generated by all oriented Voronoi cells of dimension $k$ modulo the subspace generated by those which lie wholly in 
$\partial \overline X$.  
If $g\in\Gammabar$ is an orientation reversing element in the stabilizer of a Voronoi cell $\tau$, then $g\tau=-\tau$.  We denote the image of $\sigma(g)$ in $V^\Z_{n-1}$ again by $\sigma(g)$.

Let $\st_\Z=\st(\Q^n;\Z)$ be the Steinberg module with integer coefficients.  Define $\chi\colon V^\Z_{n-1}\to\st_\Z$ by
\[
\chi(\sigma(g))=[g]
\]
for any $g\in\Gammabar$. Then
$\chi$ is equivariant for the action of $\Gammabar$.  Let $\partial\colon V^\Z_n\to V^\Z_{n-1}$ be the boundary map.  This fits into a resolution of $\st_\Z$:

\begin{theorem}[{\cite[Theorem 11]{AGM2}}]\label{agm2}
Let $n\le 4$.  Then
\[
0\to V^\Z_{n(n+1)/2-1}\to\cdots\to V^\Z_n
\xrightarrow{\partial} V^\Z_{n-1} \xrightarrow{\chi} \st_\Z\to0
\]
is an exact sequence of $\Gammabar$-modules
\end{theorem}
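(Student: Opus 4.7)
The plan is to identify the augmented Voronoi complex with the relative cellular chain complex of the pair $(\overline X,\partial\overline X)$ and then to compute its homology via the Tits building. The key topological inputs are the Solomon--Tits theorem together with the contractibility of $\overline X$.

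First, I would show that the Voronoi tessellation of $\overline X$ is a regular $\Gammabar$-invariant CW decomposition in which the open cells contained in $X$ correspond to conical hulls $\R_+\cdot q(v_1)+\cdots+\R_+\cdot q(v_m)$ for positive definite forms, while the cells lying in $\partial\overline X$ come from the degenerate positive semidefinite cones. This is Voronoi's classical theory, developed in the form we need in~\cite{AGM2}. By construction, $V_k^\Z$ is then the group of relative cellular $k$-chains of the pair. A dimension count shows that any Voronoi cell supported on fewer than $n$ minimal vectors gives a degenerate form, hence sits in $\partial\overline X$; thus $V_k^\Z=0$ for $k<n-1$, and similarly $V_k^\Z=0$ for $k>n(n+1)/2-1$ since that is the dimension of $X$.

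Next, I would compute $H_*(\overline X,\partial\overline X)$ topologically. Since $X$ is an open convex cone modulo homotheties, it is contractible, and the inclusion $X\hookrightarrow\overline X$ is a homotopy equivalence. On the other hand, $\partial\overline X$ deformation retracts onto the rational Tits building $T(\Q^n)$: the stratum $C(W)/\R_+$ attached to each nontrivial proper subspace $W\subset\Q^n$ collapses to a point, and the incidences reproduce the simplicial structure of $T(\Q^n)$. By Solomon--Tits, $T(\Q^n)$ is a wedge of $(n-2)$-spheres, so its reduced homology is concentrated in degree $n-2$ and equals $\st_\Z$ there by Definition~\ref{stein}. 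The long exact sequence of the pair then gives
\[
H_k(\overline X,\partial\overline X)\cong\widetilde H_{k-1}(\partial\overline X)\cong\widetilde H_{k-1}(T(\Q^n)),
\]
which is $\st_\Z$ for $k=n-1$ and vanishes otherwise. Consequently the complex $V^\Z_{n(n+1)/2-1}\to\cdots\to V^\Z_n\xrightarrow{\partial}V^\Z_{n-1}\to 0$ is acyclic except in degree $n-1$, where its homology is $\st_\Z$.

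It remains to verify that the augmentation $\chi$ matches the connecting isomorphism above. By $\Gammabar$-equivariance it suffices to check this on the single cell $\sigma_0=\sigma(I)$: the boundary of $\sigma_0$ in $\partial\overline X$ is the union of faces supported on the coordinate subspaces spanned by proper subsets of $\{e_1,\ldots,e_n\}$, and these assemble (after retraction to $T(\Q^n)$) to the fundamental $(n-2)$-sphere with vertices the flags built from the standard basis, i.e.\ the class $[I]\in\st_\Z$. Prepending $\chi$ to the complex therefore produces the asserted exact sequence. The main obstacle is the technical content of Step~1, namely showing that Voronoi's construction yields an honest regular $\Gammabar$-invariant CW decomposition of $\overline X$ with the predicted boundary cells and consistent orientations; this is the delicate input that restricts the statement to $n\le 4$ in the form cited from~\cite{AGM2}, while Steps~2 and~3 are essentially formal once that is in hand.
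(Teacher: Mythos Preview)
The paper does not give its own proof of this statement; it is quoted verbatim from \cite[Theorem~11]{AGM2} and used as a black box. So there is nothing in the present paper to compare your argument against.

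That said, your outline is the standard topological argument and is essentially correct. Once the Voronoi tessellation is known to be a $\Gammabar$-equivariant polyhedral decomposition of $\overline X$, the identification of $V^\Z_\bullet$ with the relative cellular chains of $(\overline X,\partial\overline X)$, the contractibility of $\overline X$ (it is the image of a convex set), the deformation retraction of $\partial\overline X$ onto the rational Tits building, and Solomon--Tits combine exactly as you describe to give the exact sequence.

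One correction to your last paragraph: you attribute the restriction $n\le 4$ to delicacies in establishing that the Voronoi tessellation is an honest regular CW structure, but that is not where any difficulty lies. Voronoi's reduction theory produces a genuine locally finite polyhedral decomposition of $\overline C$ for every $n$, and your Steps~2 and~3 then go through verbatim in all dimensions. The hypothesis $n\le 4$ in the cited theorem reflects only the scope of what \cite{AGM2} states and needs (and, in that paper, the explicit enumeration of Voronoi cells that is available in low rank); the exact sequence itself is valid for all $n$. So your argument in fact proves more than the stated theorem, and your diagnosis of the role of the hypothesis is off.
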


This is not a free $\Z[\Gammabar]$-resolution of $\st_\Z$.  For each $k$,  the module $V^\Z_k$ is isomorphic to a direct sum of induced $\Z[\Gammabar]$-modules, each of which is obtained from the orientation character induced from the finite stabilizer of some Voronoi cell to
 $\Gammabar$. Let $d=6$ if $n\le 3$ and $d=30$ if $n=4$. The orders of the stabilizers divide $d^\infty$.  
 
 Set $V_k=V_k^\Z\otimes_\Z R$ for all $k$.
 Then the ``Voronoi homology'' of $\Gamma$ over $R$, which by definition is the homology of the complex of coinvariants,
\[
0\to (V_{n(n+1)/2-1}))_\Gamma\to\cdots\to 
(V_n)_\Gamma \xrightarrow{\bar\partial} (V_{n-1})_\Gamma  \to0,
\]
is isomorphic as an $R$-module to the Steinberg homology 
$H_*(\Gamma,\st(\Q^n,R))$ if $d$ is invertible in $R$.  (See~\cite[Corollary~12]{AGM2}.)
 
 The theorem we need to justify our computations is the following:
\begin{theorem}\label{whatwhat}
Let $n\le 4$.
Let $d=6$ if $n\le 3$ and $d=30$ if $n=4$.
Let $R$ be a ring on which $d$ acts invertibly, and let $\Gamma$ be a subgroup of finite index in $\Gammabar$. 
Then there is an isomorphism 
\[ \phi\colon  \st(\Q^n,R)_\Gamma=H_0(\Gamma,\st(\Q^n,R))\to (V_{n-1})_\Gamma/\image\bar  \partial,\]
where $\phi$ may be computed as follows:  For any $[g]\in\st$, suppose
$[g]=\sum[B_j]$ for some $B_j\in\GL_n(\Z)$.  Then 
\[
\phi([g]_\Gamma)=(\sum \sigma(B_j)_\Gamma)',
\] 
where the subscript $\Gamma$ denotes the image in the coinvariants
and the prime denotes reduction modulo the image of $\bar\partial$.
\end{theorem}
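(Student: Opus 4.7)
The plan is to use the Voronoi complex from Theorem~\ref{agm2}, tensored with $R$, as a $\Gamma$-acyclic resolution of $\st(\Q^n, R)$. Because $\st_\Z$ and each $V_k^\Z$ is $\Z$-free, tensoring the sequence of Theorem~\ref{agm2} with $R$ preserves exactness and yields an exact complex
\[
0 \to V_{n(n+1)/2-1} \to \cdots \to V_n \xrightarrow{\partial} V_{n-1} \xrightarrow{\chi} \st(\Q^n,R) \to 0
\]
of $R[\Gammabar]$-modules, which I view as a resolution of $\st(\Q^n,R)$ by $R[\Gamma]$-modules via restriction.

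Next I would establish that each $V_k$ is $\Gamma$-acyclic in positive degrees. As a $\Z[\Gammabar]$-module, $V_k^\Z$ is a direct sum, over $\Gammabar$-orbits of Voronoi $k$-cells not lying in $\partial\overline X$, of modules of the form $\Ind_{H}^{\Gammabar} \Z_\epsilon$, where $H$ is the (finite) stabilizer of an orbit representative and $\Z_\epsilon$ carries the orientation character; by hypothesis $|H|$ divides $d^\infty$.  The Mackey formula then rewrites the restriction of each summand to $\Gamma$ as a finite direct sum $\bigoplus_i \Ind_{H'_i}^\Gamma R_\epsilon$ with $H'_i = \Gamma \cap g_i H g_i\inv$, still finite of order dividing $d^\infty$.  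Since $d$ is invertible in $R$, any finite group of order invertible in $R$ has vanishing higher homology with coefficients in any $R$-module, so Shapiro's lemma yields $H_i(\Gamma, V_k) = 0$ for $i > 0$.  Hence $V_\bullet$ is a $\Gamma$-acyclic resolution of $\st(\Q^n,R)$, and right-exactness of coinvariants gives
\[
H_0(\Gamma,\st(\Q^n,R)) \cong (V_{n-1})_\Gamma / \image \bar\partial,
\]
with the isomorphism $\bar\chi_\Gamma$ induced by $\chi$.

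For the explicit formula, $\phi$ is to be the inverse of $\bar\chi_\Gamma$.  By construction $\chi(\sigma(B)) = [B]$ for every $B \in \GL_n(\Z)$; on the other hand, $\st(\Q^n;\Z)$ is generated as an abelian group by the symbols $[B]$ with $B \in \GL_n(\Z)$ (the Ash--Rudolph algorithm; compare Theorem~\ref{by} for $n=3$), so any $[g]$ with $g \in \GL_n(\Q)$ can be written as a finite sum $\sum [B_j]$ with $B_j \in \GL_n(\Z)$.  Then $\bar\chi_\Gamma\bigl(\bigl(\sum \sigma(B_j)_\Gamma\bigr)'\bigr) = [g]_\Gamma$, and since $\bar\chi_\Gamma$ is an isomorphism the class $\bigl(\sum \sigma(B_j)_\Gamma\bigr)'$ is independent of the chosen decomposition and equals $\phi([g]_\Gamma)$.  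The main obstacle is the acyclicity step: one must carefully track how induction from $\Gammabar$ restricts to $\Gamma$ via Mackey and confirm that every finite stabilizer so produced has order dividing $d^\infty$, since it is precisely the invertibility of $d$ in $R$ that makes the relevant higher group homology vanish.  Once that is in hand, the rest is diagram chasing with the explicit maps $\chi$ and $\partial$.
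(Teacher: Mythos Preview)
Your argument is correct and in fact more direct than the paper's.  Both proofs establish that each $V_k$ is $\Gamma$-acyclic (the paper via a hyperhomology spectral sequence, you via Mackey and Shapiro, which amounts to the same thing), but you then observe that right-exactness of coinvariants alone already forces the map $(V_{n-1})_\Gamma/\image\bar\partial \to \st_\Gamma$ induced by $\chi$ to be an isomorphism, and simply define $\phi$ as its inverse; the formula $\phi([g]_\Gamma)=(\sum\sigma(B_j)_\Gamma)'$ is then immediate from $\chi(\sigma(B_j))=[B_j]$.  The paper instead builds an explicit augmentation-preserving chain map $\Psi\colon F_0\otimes_R\st\to V_{n-1}$ via the Fundamental Lemma of Homological Algebra and the unipotent basis of $\st$, so that $\phi$ is \emph{a priori} defined only through one fixed decomposition; to show the stated formula holds for an arbitrary decomposition $[g]=\sum[B_j]$, the paper invokes Bykovskii's presentation (Theorem~\ref{by}) and checks that each of its relations is realized as the boundary of an explicit Voronoi $n$-cell $\tau_0$ with minimal vectors $\pm\{e_1,\dots,e_n,e_1+e_2\}$.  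Your route bypasses this entirely, at the cost of leaving implicit the geometric reason why two decompositions differ by a Voronoi boundary.  Note also that the acyclicity paragraph, while correct, is stronger than what the degree-zero statement of the theorem actually needs: right-exactness of coinvariants suffices.
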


\begin{proof}
In this proof, we write $\st$ as short for $\st(\Q^n,R)=\st_\Z\otimes_\Z R$.
Let $F_\bullet\to R$ be the standard resolution of the trivial module $R$.  So $F_i$ is the free $R[\Gamma]$-module with 
 basis $(g_0,\dots,g_i)\in \Gamma^{i+1}$, and the action is given by
 $g(g_0,\dots,g_i)=(gg_0,\dots,gg_i)$.
 
Then $F_\bullet\otimes_R\st\to \st$ 
is a free resolution of the $R[\Gamma]$-module $\st$.  By the Fundamental Lemma of Homological Algebra (FLHA)~\cite[Chpt.~I, Lemma~7.4]{B}  there is an augmentation preserving chain map $f \colon F_\bullet\otimes_R\st\to V_\bullet$, which is unique up to homotopy.  (Note that $f$ shifts subscripts, taking $F_i\otimes_R\st$ to $V_{i+n-1}$.)

Given any resolution $\Phi_\bullet$ (not necessarily free) of $\st$ by $R\Gamma$-modules, there is a spectral sequence
\[
E_{pq}^1 = H_q(\Gamma, \Phi_p) \Rightarrow H_{p+q} (\Gamma, \Phi_\bullet).
\]
See~\cite[Chpt.~VII, (5.3)]{B}.  
Suppose that for each $p$, the module $\Phi_p$ satisfies the condition:
\begin{itemize}
\item[($\ast$)] $\Phi_p$ is a direct sum of induced modules, each induced from a finite subgroup whose cardinality is invertible in $R$.
\end{itemize}
Then using Shapiro's lemma, we see that $E_{pq}^1=0$ whenever $q>0$.
(Compare~\cite[Chpt.~VII, (7.10)]{B}.)  It follows that 
$H_{\bullet}(\Gamma, \st)$ is isomorphic to the 
homology of the complex $(\Phi_\bullet)_\Gamma$ of
 $\Gamma$-coinvariants.

Note that ($\ast$) holds for both $\Phi=F_\bullet\otimes_R\st$ and $\Phi=V_\bullet$.
Therefore $H_\bullet(\Gamma, \st)$, which equals the homology of the complex $F_\bullet\otimes_\Gamma \st = (F_\bullet\otimes_R \st)_\Gamma$, is isomorphic to the homology of the complex $(V_\bullet)_\Gamma$.  This isomorphism can be induced by the map $f$ on the spectral sequences.
It follows that 
$f$ induces an isomorphism on homology
$H_i(\Gamma, \st)\to H_{i+n-1}((V_\bullet)_\Gamma)$.  This was proved already in~\cite{{AGM2}}.

We now  construct an explicit $f$, and $\phi$ will be $f$ in degree 0.  Following the proof of \cite[Chpt.~I, Lemma~7.4]{B}, we construct $f$ one degree at a time, starting with the identity map on the augmentation module $\st$.  Since we only need to get to $\phi$, we just have to do the first step.

Choose the $R$-basis for $\st$ consisting of $\{[U]\}$ where $U$ runs through the unipotent upper triangular matrices in $\SL_n(\Q)$.  For each $[U]$, there exists a finite set $\{A_i(U)\}\subset\SL_n(\Z)$ such that 
 $[U]=\sum [A_i(U)]$.

  Then $F_0\otimes_R\st$ has a free $R\Gamma$ basis consisting of 
$\{I\otimes [U]\}$, where $I$ denotes the identity matrix in $\Gamma$.  
Define $\Psi(I\otimes [U])=\sum \sigma(A_i(U))\in V_{n-1}$.   Extend $\Psi$ to all of 
$F_0\otimes_R\st$ to make it $\Gamma$-equivariant.  Then $\Psi$ commutes with the augmentation maps and we can take $f_0=\Psi$.

Let $\phi$ be the map on homology induced by $\Psi$.  As shown above, it is an isomorphism.  It remains to show that it can be computed as stated in the theorem.  Let $g\in\GL_n(\Z)$.  Write $[g]=\sum c_U[U]$.  Then 
\[
\phi([g]_\Gamma)=\sum\sum c_U\sigma(A_i(U))_\Gamma.
\]

On the other hand, suppose $[g]=\sum[B_j]$ for some $B_j\in\GL_n(\Z)$.  Then 
$\sum[B_j]-\sum\sum c_U[A_i(U)]=0$ in $\st$.  Therefore by Theorem~\ref{by}, and using its notation, extended to write $\gen{A}$ instead of $\gen{a_1,a_2,\dots,a_n}$ if $A$ is a matrix with the columns $a_1,a_2,\dots,a_n$,
 there is a finite set of $n$-tuples $S$ such that 
 \begin{multline*}
\sum\gen{B_j}-\sum\sum c_U\gen{A_i(U)} =
\\
\sum_{(a,b,a_3,\dots,a_n)\in S}
\gen{a,b,a_3,\dots,a_n}+\gen{-b,a+b,a_3,\dots,a_n}+\gen{a+b,-a,a_3,\dots,a_n},
 \end{multline*}
where  each 
  $\set{a,b,a_3,\dots,a_n}$ is a $\Z$-basis of $\Z^n$.
  It follows that 
  \begin{multline*}
\sum\sigma(B_j)-\sum\sum c_U\sigma(A_i(U)) =\\
\sum_{{a,b,a_3,\dots,a_n}\in S}
\sigma(a,b,a_3,\dots,a_n)+\sigma(-b,a+b,a_3,\dots,a_n)+\sigma(a+b,-a,a_3,\dots,a_n).
  \end{multline*}

There is a Voronoi $n$-cell $\tau_0$ whose boundary is
  $$\sigma(e_1,e_2,e_3,\dots,e_n)+\sigma(-e_2,e_1+e_2,e_3,\dots,e_n)
  +\sigma(e_1+e_2,-e_1,e_3,\dots,e_n).$$ 
  ($\tau_0$ has minimal vectors $\pm\{e_1,\dots, e_n, e_1+e_2 \}$). 
  Therefore
    \[\Sigma\coloneqq \sigma(e_1,e_2,e_3,\dots,e_n)+\sigma(-e_2,e_1+e_2,e_3,\dots,e_n)
  +\sigma(e_1+e_2,-e_1,e_3,\dots,e_n)\]
  is in the image of $\partial$.  If $h$ is the matrix with columns $a,b,a_3,\dots,a_n$, then \[h\Sigma=\sigma(a,b,a_3,\dots,a_n)+\sigma(-b,a+b,a_3,\dots,a_n)+\sigma(a+b,-a,a_3,\dots,a_n)\] is in the image of $\partial$.
  
Therefore 
    \[
\sum\sigma(B_j)-\sum\sum c_U\sigma(A_i(U)) 
\]
   is in the image of $\partial$, and its image in the coinvariants  is in the image of $\bar\partial$.
  
We are trying to prove   $\phi([g]_\Gamma)= \sum \sigma(B_j)_\Gamma$ modulo the
   image of $\bar\partial$.
We have seen that $\phi([g]_\Gamma)=\sum\sum c_U\sigma(A_i(U))_\Gamma$.
  So we are finished by the preceding paragraph.
    \end{proof}

\section{Description of the computations} \label{comp} 
We now give the details of the computation for $n = 3$.  Let $v_1, v_2, \dots, v_6$ be the column vectors of the matrix $\mat{ 1& 0& 0& 1& 0& 1\\ 0& 1& 0& 1& 1& 1\\ 0& 0& 1& 0& 1& 1 }$.  For a subset $S \subseteq \set{1, 2, \dots, 6}$, we abuse notation and let $S$ also denote the conical convex hull (modulo $\R_+$) of $\set{q(v_{a})\given a \in S}$.  Then the Voronoi cellulation of $\Xbar$ comes from the $\Gammabar$-orbit of the simplicial  $5$-cell $\set{1, 2, \dots, 6}$ and its faces.

Fix $\Gamma = \Gamma_0(N)$.  We need to compute 
$(V_3)_\Gamma$ and $(V_2)_\Gamma$.
There are two $\Gammabar$-orbits of $3$-cells, and we fix representatives $\sigma_1 = \set{1, 2, 3, 4}$ and $\sigma_2 = \set{1, 3, 4, 5}$.  There is one $\Gammabar$-orbit of $2$-cells, with representative $\sigma_0 = \set{1,2,3}$ as described in Section~\ref{what}.  We follow \cite[Section~3]{AGM1} 
to compute $(V_{3})_\Gamma$ and $(V_{2})_\Gamma$.  
The $i$-cells in $(V_i)_\Gamma$ are in one-to-one correspondence with the orientable right $\Gammabar_T$-orbits in $\PP^2(\Z/N\Z)$, where $T = \sigma_0$ for $i = 2$ and $T = \sigma_1, \sigma_2$ for $i = 3$, and $\Gammabar_T$ is the stabilizer of $T$.  

The boundary differential $\bar \partial$ is computed as described in 
\cite[Section~6.2]{imquadcoh}.  After one has the differential, computing the homology is a standard problem in exact linear algebra, since the Voronoi homology in bottom degree is the cokernel $\Coker(\bar \partial)$.

We also need to compute the Hecke operators $T(\ell,k)$ for primes $\ell \nmid N$ and $k = 0, 1, \dots, 3$.  Because the coefficient module is trivial, we have $T(\ell,0)$ and $T(\ell,3)$ are each the identity map.  As described in Section~\ref{hecke}, set
\[D(\ell,1)=\diag(1,1,\ell) \quad \text{and} \quad D(\ell,2)=\diag(1,\ell,\ell).\]
For $k = 1,2$ the action of the Hecke operators comes from expressing the double coset as a disjoint union of left cosets 
\[\Gamma D(\ell,k) \Gamma = \coprod_{h \in \Omega_k} \Gamma h.\]
We can take
\[
\Omega_1 = \set*{\mat{\ell \\ & 1 \\ && 1}} \cup \set*{ \mat{1 &&
    a\\ & 1 & b\\ && \ell}  \given a, b \in R} \cup
\set*{\mat{1 & a &\\ & \ell & 
  \\ && 1} \given a \in R }.
\]
and 
\[
\Omega_2 = \set*{\mat{\ell \\ & \ell \\ && 1}} \cup \set*{ \mat{1 &
    a& b
    \\ & \ell & \\ && \ell}  \given a, b \in R} \cup
\set*{\mat{\ell &  &\\ & 1 & a
  \\ && \ell} \given a \in R }.
\]
Then the action of $T(\ell,k)$ on the symbol $[v_1, v_2, v_3]$ is given by 
\[T(\ell,k) [v_1, v_2, v_3] = \sum_{h \in \Omega_k}[hv_1, hv_2, hv_3].\]
See~\cite[Section~6 (B)]{AGG} for this formula.  Note that the matrices $h$ have determinant $\ell^k$, so as $\ell$ increases, the determinants of the modular symbols in this formula increase quickly, and so does the number of terms in the sum.  We use the modular symbol algorithm to reduce these modular symbols to ``unimodular" symbols, i.e., those with determinant $1$.  The unimodular symbols represent cohomology classes that are immediately interpretable in terms  of the Voronoi homology, as in Theorem~\ref{whatwhat}.

We now briefly describe the modular symbol algorithm.  This is used both to compute $H(\Gamma,E)$ itself, and the action of Hecke operators on it. We just explained why we need it for the Hecke operators.  As for $H(\Gamma,E)$, we compute elements of it using Algorithm~\ref{algo}.  What emerges from that algorithm are modular symbols $[f,\gamma f,da]_\Gamma$, which generally have large determinant.  They must be reduced to unimodular symbols before we can use 
Theorem~\ref{whatwhat} to interpret them in
the Voronoi homology in degree 0.
The modular symbol algorithm expresses arbitrary modular symbols as sums of unimodular symbols.

It suffices to be able to solve the following. Given an integral matrix $A= \mat{v_1 &  v_2 &  v_3} \in \GL_3(\Q) \setminus \GL_3(\Z)$, produce a nonzero vector $x \in \Z^3$ such that $\abs{\det(A_i)} < \abs{\det(A)}$ for $i = 1, 2, 3$, where $A_i$ is the $3 \times 3$ matrix obtained by replacing the $i$th column by $x$.  Then passing through $x$ on the symbol $[v_1, v_2, v_3]$ as described in Theorem~\ref{thm:mod-props} and repeated application gives the desired modular symbol algorithm.

To find $x$, we use \cite[Section~2.10]{vanGeemenetal} which uses parts of \cite{ash-rudolph}.  We give the main idea here for the convenience of the reader.  Since $\abs{\det(A)} > 1$, there exists an integer $m > 1$ such that $\det(A) \equiv 0 \pmod{m}$.  Thus the nullspace of $A$ modulo $m$ is nontrivial.  It follows that there exists $a_i \in \Z$,  $i = 1, 2, 3$ such that $x = \frac{1}{m}(a_1v_1 + a_2v_2 + a_3 v_3)$ has integral entries.  Since the $a_i$ are representatives of integers modulo $m$, we can choose $a_i$ such that $\abs{a_i} \leq m/2$.  Then
\[\abs{\det(A_i)} = \frac{\abs{a_i}}{m}\abs{\det(A)} \leq \frac{1}{2}\abs{\det(A)},\]
as desired.

We carried out the computations using Magma V2.25-8 \cite{magma} running on Intel(R) Xeon(R) Gold 6148 CPU  2.40GHz.  The server had 16 processors and 64Gb RAM with 100Gb swap. Our computations were not memory intensive, so the swap was not utilized.

For the range of our computations, over various levels $N\le169$ and real quadratic fields $E=\Q(\sqrt\Delta)$ with squarefree $\Delta\le10$, computing the Voronoi homology is not the bottleneck.  As described above, orbits of the projective plane over $\Z/N\Z$ are used to parametrize cells, so the cardinality $\#\PP^2(\Z/N\Z)$ gives a sense of the complexity.  In the range of computation, this was largest for $N = 13^2 = 169$, and in that case, $\#\PP^2(\Z/N\Z) = \num{30927}$.  This results in a boundary matrix of size $1266 \times 3768$.  The orbit computation, the relations, and resulting Voronoi homology took around 11 seconds total in this case.  The computation of the image of $H(\Gamma,E)$ took significantly longer.  This is because the determinants of the outputted modular symbols can become very large.

Now we describe how we used Algorithm~\ref{algo} to compute elements of 
$H(\Gamma,E)$.  Since $H(\Gamma,E)$ is the image of the connecting homomorphism, we call the $\Q$-span of the elements constructed at any stage of the computation the ``image".

With the exception of $N = 121$ and $N = 169$, we conducted an exhaustive search over a rectangular box:

In Step 1 of Algorithm~\ref{algo}, we first construct a unital $2 \times 2$ matrix $h$ as described in \cite[Theorem~4.1.]{AY}.  For the convenience of the reader, we give additional details here.
The task is to generate $\beta$-unital matrices for $\beta \in E\setminus \Q$.  A matrix $h$ is $\beta$-unital if and only if there is a unit $\eta \in \O_E^\times$ such that 
\[h \mat{\beta\\1} = \eta \mat{\beta\\1}.\]
Rather than find $h$ for each $\beta$, we instead find $h$ that admit such $\beta$ by running over the units $\eta = \pm \epsilon^k$, $k=1,2,3,\dots$.   Let $f(x) = x^2 - tx + n \in \Z[x]$ be the minimal polynomial of $\eta$, so $t$ is the trace of $\eta$ and $n \in \set{1, -1}$ is the norm of $\eta$.  Then we take $h \in \Gamma$ of the form
\[h = \mat{t - d & -\frac{f(d)}{cN}\\cN & d}\]
for some integers $c$ and $d$, since the minimal polynomial of $h$ is equal to the minimal polynomial of $\eta$.  Thus we find $h$ by finding integers $c, d$ such that $f(d) \equiv 0 \bmod{N}$ and $c$ is a divisor of $f(d)/N$. 

We proceed as follows.  For each unit $\eta = \pm \epsilon^k$, we find $d'\in \Z/N\Z$ such that $f(d') \equiv 0 \pmod N$.  For each such $d'$, we find $10$ values of $d\in \Z$ in that class modulo $N$.  For each such $d$, we compute $f(d)/N$.  Each factor (positive and negative) of $f(d)/N$ yields a value of $c\in \Z$, and each pair $d$, $c$ gives rise to a $\beta$ and an $h$ that is $\beta$-unital.  In practice, $f(d)$ becomes large quickly, so factoring $f(d)/N$ to find values of $c$ is computationally expensive.  Because of this, if $f(d)$ is too large, we do not try to get a full factorization, instead relying on a partial factorization to get some factors $c$ before just moving on.

The box is determined by taking all powers $-15 \leq k \leq 15$ and all $u \in \Z^2$ of the form $\trans(a, b)$ for $a, b \in \set{-1, 0, 1}$ in Step 1, and taking a complete set of representatives $d$ in Step $3''$.

For each pair $N,E$, the image stabilized within a few minutes, but it took up to several hours to complete the exhaustive search.  
For $N = 121$ and $N = 169$ and certain $E$, a different approach was required because it was taking too long to search the whole box exhaustively.  This was owing to the large number of huge determinant modular symbols that needed to be reduced.  
The modular symbol algorithm, which has to be applied repeatedly, became a time bottleneck.

For these two levels, we switched to another approach.  We selected random elements in the box instead of going through the box systematically.  We then applied Hecke operators to construct more elements of the image.  Thus, for these two levels, we used the known theoretical fact that $H(\Gamma,E)$ is stable under the Hecke algebra. In these two cases, we lost the ability to check our computations by verifying that the image is Hecke stable, since the method ensured Hecke stability.  We still had the check that the image coincided with the prediction of our conjecture.  With this approach for $N = 121$ and $N = 169$, the image stabilized in a few minutes.  

\section{Results of computations}\label{results}

\subsection{Computed dimensions}\label{range}
We compared our computed third betti number for $\Gamma_0(N)$ with Table~1 in \cite{vanGeemenetal}.  Marc Masdeu communicated to us an unpublished table he produced in 2016 with Jun Bo Lau that gave the dimension of $H^3(\Gamma_0(N),\Q)$ for $N\le371$.
Their results disagreed with the table in \cite{vanGeemenetal} for $N=27$, $92$, and $93$.  The Bo--Masdeu numbers are $5$, $41$, and $24$, respectively as opposed to the entries of $6$, $24$, and $41$ given in \cite{vanGeemenetal}.  Our results for $N= 27$, $92$, and $93$ agree with Bo--Masdeu.  The $6$ in \cite{vanGeemenetal} was no doubt a misprint, and the betti numbers for $92$ and $93$ were presumably accidentally interchanged.

We then computed the dimension of $H(\Gamma_0(N),E)$ and verified Conjecture~\ref{conj-2} for $N\le 50$, prime $N<100$, and $N=11^2,13^2$, for all $E=\Q(\sqrt \Delta)$ with  
$\Delta=2,3,5,6,7,10$.

\subsection{Cuspidal cohomology} Using the Hecke analysis, explained in the next subsection, we could verify the constituents of $H(\Gamma,E)$, not only its dimension.
Perhaps our most surprising finding, before we formulated our conjecture, was that the Voronoi dual of $H(\Gamma,E)$ always included the entire cuspidal cohomology.  For the levels $N$ treated in this paper, $H_\cusp^3(\Gamma_0(N),\C)$ is nonzero for $N=53$, $61$, $79$, $89$, $121$ (where it has dimension $2$ for each level), and $169$ (where it has dimension $4$).   This seems to be enough data to justify the part of Conjecture~\ref{conj-1} dealing with the cuspidal cohomology.

\subsection{Hecke analysis and verification of the data}
Fix $N$ and a real quadratic field $E$ in the range described in subsection~\ref{range}.  Let $\Gamma = \Gamma_0(N)$.  
For $\ell \nmid N$, we computed the Hecke operators $T(\ell,k)$ for prime $\ell < 100$ and $k=0,1,2,3$
on $H^3(\Gamma,\Q)$ and on $H(\Gamma,E)$ using modular symbols as described in Section~\ref{what}.  In every case, 
$H(\Gamma,E)$ was found to be Hecke-stable, as it must be.
\footnote{As explained in the previous section, when $N=121$ or $169$, the Hecke stability was actually produced by our computations, rather than ``found to be" so.}  Here is more detail on the Hecke analysis of our data:

First, we simultaneously diagonalized the Hecke operators $\set{T(\ell,k)\given k = 1, 2; \ell < 100}$ acting on $H^3(\Gamma,\Q)$.  
Then for each
Hecke eigenclass, with Hecke eigenvalues $a(\ell,k)$ 
\footnote{Because the coefficient module is trivial, we automatically have that 
 $a(\ell,0)=a(\ell,3) = 1$. }
we formed the Hecke polynomials
\[P_{\ell}(X) =  1-a(\ell,1)X + a(\ell,2)\ell X^2 - \ell^3 X^3,\]
and considered compatible families of Galois representations $\rho \colon G_\Q\to \GL_3(K)$ (for suitable $p$-adic fields $K$) attached to this class as described in Definition~\ref{def:attached}, i.e., \[P_{\ell}(X)=\det(1-\rho(\frob_\ell)X).\]

We can determine the constituents of the decomposition 
\[H^3(\Gamma,\Q)=H_\cusp^3(\Gamma,\Q)\oplus A(\Gamma) \oplus B(\Gamma)\] from
Section~\ref{bound}
by means of the Hecke packets, or equivalently in terms of the attached Galois representations.  This is explained in Section~\ref{hecke}. Namely,
if $z\in A(\Gamma)\otimes_\Q\C$ is a Hecke eigenclass, then its attached Galois representation is the direct sum of a Dirichlet character of conductor dividing $N$ and an odd two-dimensional representation coming from a holomorphic modular form of weight 2 and level dividing $N$ and trivial nebentype.  If $z\in B(\Gamma)\otimes_\Q\C$ is a Hecke eigenclass, its attached Galois representation is the direct sum of 3 Dirichlet characters of levels dividing $N$.  
Any other Hecke eigenclasses belong to $H_\cusp^3(\Gamma,\C)$.
\footnote{Because the Hecke eigenvalues need not lie in $\Q$, individual Hecke eigenclasses will be found in $H^3(\Gamma,F)$ for some (possibly nontrivial) extension $F/\Q$.   However, if a Hecke eigenclass belongs to $A(\Gamma)\otimes \C$, for example, all its Galois conjugates will also lie there, so that $A(\Gamma)$ is defined over $\Q$.  Similarly the other constituents are also defined over $\Q$, and the decomposition occurs over $\Q$, as implied by the formula 
$H^3(\Gamma,\Q)=H_\cusp^3(\Gamma,\Q)\oplus A(\Gamma) \oplus B(\Gamma)$.}

In this way, we compared the Hecke polynomials and the attached Galois representations of the eigenclasses in our computed 
$H(\Gamma,E)$ with our predictions, and we verified Conjecture~\ref{conj-1} in all cases.  

\subsection{\texorpdfstring{$N = 121$}{N=121} example}
Let $\Gamma = \Gamma_0(121)$.  The cohomology $H^3(\Gamma,\Q)$ is $29$-dimensional.  For each real quadratic field $E$ in the range of computation, $H(\Gamma,E)$ is $16$-dimensional.  The cohomology $H^3(\Gamma,\Q)$ decomposes as follows.

\subsubsection{Trivial: \texorpdfstring{$1$}{1}-dimensional contribution}\label{triv}
 There is a $1$-dimensional space where $a(\ell,1) = a(\ell,2) = \ell^2 + \ell + 1$, so that 
  \[P_{\ell}(X) = (1 - X)(1 - \ell X)(1 - \ell^2 X).\]
  This class contributes to $B(\Gamma_0(121))$.
  
\subsubsection{\texorpdfstring{$\GL_2$}{GL2}-newforms: \texorpdfstring{$8$}{1}-dimensional contribution} \label{sssec:gl2} The space of weight $2$ holomorphic cuspforms with trivial character at level $121$ has a $4$-dimensional newspace.  The four newforms have LMFDB \cite{lmfdb} labels \href{https://www.lmfdb.org/ModularForm/GL2/Q/holomorphic/121/2/a/a/}{121.2.a.a}, 
\href{https://www.lmfdb.org/ModularForm/GL2/Q/holomorphic/121/2/a/b/}{121.2.a.b}, \href{https://www.lmfdb.org/ModularForm/GL2/Q/holomorphic/121/2/a/c/}{121.2.a.c}, and \href{https://www.lmfdb.org/ModularForm/GL2/Q/holomorphic/121/2/a/d/}{121.2.a.d}.  
The Hecke field of each new newform is $\Q$.

Each newform $f$ contributes a $2$-dimensional space to $H^3(\Gamma,\Q)$ in the following way.  Let $a_\ell = a_\ell(f)$.  There are two Hecke eigenforms in $H^3(\Gamma,\Q)$ with rational Hecke eigenvalues $\set{a(\ell,k)}$ and $\set{b(\ell,k)}$ such that 
  \[a(\ell,1) = b(\ell,2) = a_\ell + \ell^2 \quad \text{and} \quad a(\ell,2) = b(\ell,1) = \ell a_\ell + 1. \]  Then the associated Hecke polynomials are 
  \[P_{\ell}(X) = (1 - \ell^2X)(1-a_\ell X + \ell X^2)\]
  and
\[P_{\ell}(X) = (1 - X)(1 -\ell a_\ell X+ \ell^3 X^2).\]
\subsubsection{\texorpdfstring{$\GL_2$}{GL2}-oldforms: \texorpdfstring{$6$}{6}-dimensional contribution}  The space of weight $2$ holomorphic cusp forms with trivial character at level $11$ is $1$-dimensional, and it is new since $11$ is prime. It is generated by the class with LMFDB label \href{https://www.lmfdb.org/ModularForm/GL2/Q/holomorphic/11/2/a/a/}{11.2.a.a}. This $\GL_2$ cuspform $f$ contributes to $H^3(\Gamma,\Q)$ with eigenforms as in the $\GL_2$-newform case, but now each with multiplicity $3$.  This is consistent with the observation in \cite[Remark~3.2]{vanGeemenetal} who cite \cite{reeder}. 

These classes from the newforms and the oldforms of level $121$ together span $A(\Gamma_0(121))$, which thus has dimension 14.

\subsubsection{Symmetric squares: \texorpdfstring{$2$}{2}-dimensional contribution}
The $\GL_2$-newform of level $121$ with LMFDB label \href{https://www.lmfdb.org/ModularForm/GL2/Q/holomorphic/121/2/a/d/}{121.2.a.d} is a quadratic twist of the newform of level $11$ with LMFDB label  \href{https://www.lmfdb.org/ModularForm/GL2/Q/holomorphic/11/2/a/a/}{11.2.a.a}. 
  Then the symmetric square lift of these $\GL_2$-forms agree and give a cuspidal Hecke eigenclass in $H^3(\Gamma,\Q)$
with Hecke eigenvalues $\set{a(\ell,k)}$ such that 
\[a(\ell,1) = a(\ell,2) = a_\ell^2 - \ell, \]
where $a_\ell$ is the Hecke eigenvalue of the $\GL_2$-form.
The associated Hecke polynomials are 
  \[P_{\ell}(X) = (1 - \ell X)(1-(a_\ell^2 - 2 \ell) X + \ell^2 X^2).\]
  
  The two $\GL_2$-newforms of level $121$ with LMFDB labels \href{https://www.lmfdb.org/ModularForm/GL2/Q/holomorphic/121/2/a/a/}{121.2.a.a}
  and  \href{https://www.lmfdb.org/ModularForm/GL2/Q/holomorphic/121/2/a/c/}{121.2.a.c}
   are quadratic twists of each other, so they have the same symmetric square lift in $H^3_\cusp(\Gamma,\Q)$ constructed as above.

The remaining $\GL_2$-newform at level $N = 121$ is  
\href{https://www.lmfdb.org/ModularForm/GL2/Q/holomorphic/121/2/a/b/}{121.2.a.b}. 
It has CM and its symmetric square lift (which is not cuspidal) has level higher than $121$ and does not appear in $H^3(\Gamma,\Q)$.

In sum: $H^3_\cusp(\Gamma,\Q)$ is made up of symmetric square lifts, and has dimension 2.

\subsubsection{Dirichlet characters: \texorpdfstring{$12$}{12}-dimensional contribution}
The group of Dirichlet characters of modulus $11$ is a cyclic group of order $10$.  There is an index $2$ subgroup of order $5$.  The four nontrivial elements of this subgroup each contributes a $3$-dimensional subspace of $H^3(\Gamma,\Q)$ as follows.
Let $\chi$ be one of the characters of order $5$.  Then $\chi$ has codomain the cyclotomic field $\Q(\zeta_5)$.  The character 
$\chi$ contributes $3$ Hecke polynomials:
\begin{align*}
P_\ell(X)& = (1-X)(1-\chi(\ell)\ell X)(1-\chi^{-1}(\ell)\ell^2 X) \\
P_\ell(X)& = (1-\ell X)(1-\chi(\ell) X)(1-\chi^{-1}(\ell)\ell^2 X) \\
P_\ell(X)& = (1-\ell^2 X)(1-\chi(\ell) X)(1-\chi^{-1}(\ell) \ell X).
\end{align*}

These classes are all in $B(\Gamma_0(121))$ and together with the class in subsection~\ref{triv} give $\dim_\Q(B(\Gamma_0(121)) =13$.  Note that $13$ equals $b=(3\cdot 11-7)/2$ as predicted in 
Example~\ref{psq}.

Check that the dimensions add up:
\[\dim_\Q(H(\Gamma_0(121),E)) = 29-13=16=14+2.\]

\bibliographystyle{amsalpha}
\bibliography{biblio}

\newcommand{\etalchar}[1]{$^{#1}$}
\providecommand{\bysame}{\leavevmode\hbox to3em{\hrulefill}\thinspace}
\providecommand{\MR}{\relax\ifhmode\unskip\space\fi MR }
\providecommand{\MRhref}[2]{%
  \href{http://www.ams.org/mathscinet-getitem?mr=#1}{#2}
}
\providecommand{\href}[2]{#2}
\begin{thebibliography}{vGvdKTV97}

\bibitem[AGG84]{AGG}
Avner Ash, Daniel Grayson, and Philip Green, \emph{Computations of cuspidal
  cohomology of congruence subgroups of {${\rm SL}(3,{\bf Z})$}}, J. Number
  Theory \textbf{19} (1984), no.~3, 412--436. \MR{769792}

\bibitem[AGM02]{AGM1}
Avner Ash, Paul~E. Gunnells, and Mark McConnell, \emph{Cohomology of congruence
  subgroups of {${\rm SL}_4(\mathbb Z)$}}, J. Number Theory \textbf{94} (2002),
  no.~1, 181--212. \MR{1904968}

\bibitem[AGM12]{AGM2}
\bysame, \emph{Resolutions of the {S}teinberg module for {$GL(n)$}}, J. Algebra
  \textbf{349} (2012), 380--390. \MR{2853645}

\bibitem[APS18]{APS}
Avner Ash, Andrew Putman, and Steven~V. Sam, \emph{Homological vanishing for
  the {S}teinberg representation}, Compos. Math. \textbf{154} (2018), no.~6,
  1111--1130. \MR{3797603}

\bibitem[AR79]{ash-rudolph}
Avner Ash and Lee Rudolph, \emph{The modular symbol and continued fractions in
  higher dimensions}, Invent. Math. \textbf{55} (1979), no.~3, 241--250.
  \MR{553998}

\bibitem[AS86]{ash-stevens}
Avner Ash and Glenn Stevens, \emph{Cohomology of arithmetic groups and
  congruences between systems of {H}ecke eigenvalues}, J. Reine Angew. Math.
  \textbf{365} (1986), 192--220. \MR{826158}

\bibitem[Ash13]{ash-direct-sum}
Avner Ash, \emph{Direct sums of {$\bmod p$} characters of {${\rm
  GAL}(\overline{\Bbb Q}/{\Bbb Q})$} and the homology of {$GL(n,{\Bbb Z})$}},
  Comm. Algebra \textbf{41} (2013), no.~5, 1751--1775. \MR{3062822}

\bibitem[Ash18]{A}
\bysame, \emph{Comparison of {S}teinberg modules for a field and a subfield},
  J. Algebra \textbf{507} (2018), 200--224. \MR{3807047}

\bibitem[AY21]{AY}
Avner Ash and Dan Yasaki, \emph{Steinberg homology, modular forms, and real
  quadratic fields}, J. Number Theory \textbf{224} (2021), 323--367.
  \MR{4244157}

\bibitem[BCP97]{magma}
Wieb Bosma, John Cannon, and Catherine Playoust, \emph{The {M}agma algebra
  system. {I}. {T}he user language}, J. Symbolic Comput. \textbf{24} (1997),
  no.~3-4, 235--265, Computational algebra and number theory (London, 1993).
  \MR{MR1484478}

\bibitem[Bro94]{B}
Kenneth~S. Brown, \emph{Cohomology of groups}, Graduate Texts in Mathematics,
  vol.~87, Springer-Verlag, New York, 1994, Corrected reprint of the 1982
  original. \MR{1324339}

\bibitem[BS73]{BS}
Armand Borel and Jean-Pierre Serre, \emph{Corners and arithmetic groups},
  Comment. Math. Helv. \textbf{48} (1973), 436--491. \MR{387495}

\bibitem[Byk03]{By}
V.~A. Bykovski\u{\i}, \emph{Generating elements of the annihilating ideal for
  modular symbols}, Funktsional. Anal. i Prilozhen. \textbf{37} (2003), no.~4,
  27--38, 95. \MR{2083229}

\bibitem[CFP19]{CFP}
Thomas Church, Benson Farb, and Andrew Putman, \emph{Integrality in the
  {S}teinberg module and the top-dimensional cohomology of {${\rm
  SL}_n\mathcal{O}_K$}}, Amer. J. Math. \textbf{141} (2019), no.~5, 1375--1419.
  \MR{4011804}

\bibitem[DSGG{\etalchar{+}}16]{imquadcoh}
Mathieu Dutour~Sikiri\'{c}, Herbert Gangl, Paul~E. Gunnells, Jonathan Hanke,
  Achill Sch\"{u}rmann, and Dan Yasaki, \emph{On the cohomology of linear
  groups over imaginary quadratic fields}, J. Pure Appl. Algebra \textbf{220}
  (2016), no.~7, 2564--2589. \MR{3457984}

\bibitem[EVGS13]{PerfFormModGrp}
Philippe Elbaz-Vincent, Herbert Gangl, and Christophe Soul{\'e}, \emph{Perfect
  forms, {K}-theory and the cohomology of modular groups}, Adv. Math.
  \textbf{245} (2013), 587--624. \MR{3084439}

\bibitem[HLTT16]{HLTT}
Michael Harris, Kai-Wen Lan, Richard Taylor, and Jack Thorne, \emph{On the
  rigid cohomology of certain {S}himura varieties}, Res. Math. Sci. \textbf{3}
  (2016), Paper No. 37, 308. \MR{3565594}

\bibitem[{LMF}21]{lmfdb}
The {LMFDB Collaboration}, \emph{The {L}-functions and modular forms database},
  \url{http://www.lmfdb.org}, 2021, [Online; accessed 1 June 2021].

\bibitem[LS82]{lee-schwermer}
Ronnie Lee and Joachim Schwermer, \emph{Cohomology of arithmetic subgroups of
  {${\rm SL}_{3}$} at infinity}, J. Reine Angew. Math. \textbf{330} (1982),
  100--131. \MR{641814}

\bibitem[Ree91]{reeder}
Mark Reeder, \emph{Old forms on {${\rm GL}_n$}}, Amer. J. Math. \textbf{113}
  (1991), no.~5, 911--930. \MR{1129297}

\bibitem[Sch15]{scholze}
Peter Scholze, \emph{On torsion in the cohomology of locally symmetric
  varieties}, Ann. of Math. (2) \textbf{182} (2015), no.~3, 945--1066.
  \MR{3418533}

\bibitem[Shi71]{shimura}
Goro Shimura, \emph{Introduction to the arithmetic theory of automorphic
  functions}, Publications of the Mathematical Society of Japan, No. 11.
  Iwanami Shoten, Publishers, Tokyo; Princeton University Press, Princeton,
  N.J., 1971, Kan\^{o} Memorial Lectures, No. 1. \MR{0314766}

\bibitem[vGvdKTV97]{vanGeemenetal}
Bert van Geemen, Wilberd van~der Kallen, Jaap Top, and Alain Verberkmoes,
  \emph{Hecke eigenforms in the cohomology of congruence subgroups of {${\rm
  SL}(3,\mathbf Z)$}}, Experiment. Math. \textbf{6} (1997), no.~2, 163--174.
  \MR{1474576}

\bibitem[Vor08]{voronoi1}
Georges Voronoi, \emph{Nouvelles applications des param\`etres continus \`a la
  th\'{e}orie des formes quadratiques. {P}remier m\'{e}moire. {S}ur quelques
  propri\'{e}t\'{e}s des formes quadratiques positives parfaites}, J. Reine
  Angew. Math. \textbf{133} (1908), 97--102. \MR{1580737}

\end{thebibliography}

\end{document}